\newcommand{\sumprime}{\if@display\sideset{}{'}\sum%
            \else\sum'\fi}
\begin{document}

\numberwithin{equation}{section}

\newtheorem{theorem}{Theorem}[section]
\newtheorem{proposition}[theorem]{Proposition}
\newtheorem{conjecture}[theorem]{Conjecture}
\def\theconjecture{\unskip}
\newtheorem{corollary}[theorem]{Corollary}
\newtheorem{lemma}[theorem]{Lemma}
\newtheorem{observation}[theorem]{Observation}
\newtheorem{definition}{Definition}
\numberwithin{definition}{section} 
\newtheorem{remark}{Remark}
\def\theremark{\unskip}
\newtheorem{question}{Question}
\def\thequestion{\unskip}
\newtheorem{example}{Example}
\def\theexample{\unskip}
\newtheorem{problem}{Problem}

\def\vvv{\ensuremath{\mid\!\mid\!\mid}}
\def\intprod{\mathbin{\lr54}}
\def\reals{{\mathbb R}}
\def\integers{{\mathbb Z}}
\def\N{{\mathbb N}}
\def\complex{{\mathbb C}\/}
\def\dist{\operatorname{dist}\,}
\def\spec{\operatorname{spec}\,}
\def\interior{\operatorname{int}\,}
\def\trace{\operatorname{tr}\,}
\def\cl{\operatorname{cl}\,}
\def\essspec{\operatorname{esspec}\,}
\def\range{\operatorname{\mathcal R}\,}
\def\kernel{\operatorname{\mathcal N}\,}
\def\dom{\operatorname{Dom}\,}
\def\linearspan{\operatorname{span}\,}
\def\lip{\operatorname{Lip}\,}
\def\sgn{\operatorname{sgn}\,}
\def\Z{ {\mathbb Z} }
\def\e{\varepsilon}
\def\p{\partial}
\def\rp{{ ^{-1} }}
\def\Re{\operatorname{Re\,} }
\def\Im{\operatorname{Im\,} }
\def\dbarb{\bar\partial_b}
\def\eps{\varepsilon}
\def\O{\Omega}
\def\Lip{\operatorname{Lip\,}}

\def\Hs{{\mathcal H}}
\def\E{{\mathcal E}}
\def\scriptu{{\mathcal U}}
\def\scriptr{{\mathcal R}}
\def\scripta{{\mathcal A}}
\def\scriptc{{\mathcal C}}
\def\scriptd{{\mathcal D}}
\def\scripti{{\mathcal I}}
\def\scriptk{{\mathcal K}}
\def\scripth{{\mathcal H}}
\def\scriptm{{\mathcal M}}
\def\scriptn{{\mathcal N}}
\def\scripte{{\mathcal E}}
\def\scriptt{{\mathcal T}}
\def\scriptr{{\mathcal R}}
\def\scripts{{\mathcal S}}
\def\scriptb{{\mathcal B}}
\def\scriptf{{\mathcal F}}
\def\scriptg{{\mathcal G}}
\def\scriptl{{\mathcal L}}
\def\scripto{{\mathfrak o}}
\def\scriptv{{\mathcal V}}
\def\frakg{{\mathfrak g}}
\def\frakG{{\mathfrak G}}

\def\ov{\overline}

\thanks{Supported by Grant IDH1411001 from Fudan University}

\address{School of Mathematical Sciences, Fudan University, Shanghai 200433, China}
 \email{boychen@fudan.edu.cn}

\title{ Parameter dependence of the Bergman kernels}
\author{Bo-Yong Chen}
\date{}
\maketitle

\bigskip

\begin{abstract}
Let $\{\Omega_t:-1<t<1\}$  be a family of bounded pseudoconvex domains and $\varphi_t\in PSH(\Omega_t)$. Let $K_t(z,w)$ denote the Bergman kernel with weight $\varphi_t$ on $\Omega_t$. We study the continuity and H\"older continuity of $K_t(z,w)$ in $t$. Several applications to singularity theory of psh functions are given, including a new proof of the openness theorem.
\end{abstract}

\section{Introduction}

Let $\{\Omega_t:|t|<1\}$ ($t\in {\mathbb R}$ or $t\in {\mathbb C}$) be a family of bounded domains in ${\mathbb C}^n$ and $\varphi_t\in PSH(\Omega_t):$ the set of plurisubharmonic (psh) functions on $\Omega_t$. Let $K_t(z,w)$ denote the Bergman kernel corresponding to the Hilbert space
$$
A^2(\Omega_t,{\varphi_t}) :=\left\{f\in {\mathcal O}(\Omega_t): \int_{\Omega_t} |f|^2 e^{-\varphi_t}<\infty\right\}.
$$
There are two general  approaches to study the parameter dependence of $K_t$: (1) regularity of $K_t$ in $t$; (2) convexity or (pluri)subharmonicity of $K_t$ in $t$. It is known from the works of Hamilton \cite{Hamilton79} and Greene-Krantz \cite{GreeneKrantz82} that $K_t$ is $C^\infty$ in $t$ when $\{\Omega_t\}$ is a family of strongly pseudoconvex domains such that $\{\partial\Omega_t\}$ forms a differentiable family of compact manifolds, and $\varphi_t=0$ for all $t$. Little is known about the case of  weakly pseudoconvex  domains or when $\varphi_t$ has\/ {\it singularities}. On the other side, the second approach is by now well-developed through a series of papers due to Berndtsson after the seminal work of Maitani-Yamaguchi \cite{MaitaniYamaguchi}, which turns out to be very useful in complex analysis and complex geometry (see e.g., \cite{BerndtssonSubharmonic}, \cite{BerndtssonCurvature}, \cite{BerndtssonOpenness}).

This paper is closer to the first approach. We consider the following two special cases:
\begin{enumerate}
 \item $\{\varphi_t:-1<t<1\}$ is a family of negative psh functions on a fixed domain $\Omega$.
 \item $\{\Omega_t:-1<t<1\}$ is a family of bounded domains and $\varphi_t=0$ for all $t$.
\end{enumerate}

Let $PSH^-(\Omega)$ denote the set of negative psh functions on $\Omega$.

\begin{definition}
We say that a sequence $\{\varphi_j\}\subset PSH^-(\Omega)$ satisfies condition $(\ast)$ if there exists a closed complete pluripolar set $E\subset \Omega$ such that for every compact set $S\subset \Omega\backslash E$ there is a positive function $\phi_S\in L^1(S)$ satisfying $e^{-\varphi_j}\le \phi_S$ on $S$ for sufficiently large $j$.
\end{definition}

Here a complete pluripolar set $E$ means that for every $a\in E$ there exist a neighborhood $U$ of $0$ and a nonconstant function $\psi\in PSH(U)$ such that $E\cap U=\psi^{-1}(-\infty)$.

\begin{example}[1]
Consider a family $\{\psi_t:-1<t<1\}\subset PSH^-(\Omega)$ such that $e^{\psi_t(z)}$ is continuous in $(z,t)\in {\Omega} \times (-1,1)$. Set $E:=\psi_0^{-1}(-\infty)$ and $\varphi_j=\psi_{1/j}$. Clearly, for every compact set $S\subset \Omega\backslash E$, $e^{-\varphi_j}$ is bounded by a positive constant on $S$ for all sufficiently large $j$, so that $\{\varphi_j\}$ satisfies condition $(\ast)$. We may choose for instance $\psi_t(z)=\alpha(t)\log \sum_j |f_j(z,t)|^2$ where $f_j(z,t)\in C(\Omega\times (-1,1))$, $1\le j\le m$, $f_j(\cdot,t)\in {\mathcal O}(\Omega)$ with $|f_j|\ll 1$, and $\alpha\in C((-1,1))$ with $\alpha(t)\ge c>0$ for all $t$.
\end{example}

\begin{example}[2]
Suppose that $\psi\in PSH^-(\Omega)$. Set $\varphi_t=t\psi$, $t>0$. Fix $c>0$. Set
$$
E:=\left\{z\in \Omega:e^{-c\psi}\ { is\ not\ }L^1\ { in\ any\ neighborhood\ of\ }z\right\}.
$$
By virtue of Bombieri's theorem (cf. \cite{HormanderBook}, Corollary 4.4.6),  $E$ is an analytic subset in $\Omega$, hence is a closed complete pluripolar set.
On the other hand, $e^{-c\psi}\in L^1(\Omega\backslash E,{\rm loc})$. If we set $\phi_S=e^{-c\psi}$ for every compact set $S\subset \Omega\backslash E$, then for every sequence $t_j\rightarrow t_0< c$, $\{\varphi_{t_j}\}$ satisfies condition $(\ast)$.
\end{example}

A domain $\Omega\subset {\mathbb C}^n$ is called hyperconvex if there exists a continuous function $\rho\in PSH^-(\Omega)$ such that $\{\rho<c\}\subset\subset \Omega$ for every $c<0$.

 \begin{theorem}\label{th:weighted_Continue}
        Let $\Omega\subset {\mathbb C}^n$ be a bounded hyperconvex domain. Suppose that $\{\varphi_j\}\subset PSH^-(\Omega)$ satisfies condition $(\ast)$ and $\varphi_j$ converges almost everywhere on $\Omega$ to a function $\varphi\in PSH^-(\Omega)$. Let $K_j$ and $K$ denote the Bergman kernel with weight $\varphi_j$ and $\varphi$ on $\Omega$. Then $K_j(z,w)$ converges locally uniformly to $K(z,w)$ on $\Omega\times \Omega$.
       \end{theorem}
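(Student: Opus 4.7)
The plan is to reduce to pointwise diagonal convergence $K_j(z,z)\to K(z,z)$ on the dense open set $\Omega\setminus E$, and then upgrade to locally uniform convergence on $\Omega\times \Omega$ by a normal-families argument. Since $\varphi_j\le 0$ we have $e^{-\varphi_j}\ge 1$, so $K_j(z,z)$ is dominated by the unweighted Bergman kernel of $\Omega$; combined with $|K_j(z,w)|^2\le K_j(z,z)K_j(w,w)$ this shows $\{K_j\}$ is a normal family of sesquianalytic functions. Any subsequential locally-uniform limit $\tilde K$ is holomorphic in $z$ and antiholomorphic in $w$, hence determined by its diagonal via the identity principle; so it suffices to show $\tilde K(z,z)=K(z,z)$ for $z\in\Omega\setminus E$.

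Fix $z_0\in\Omega\setminus E$. The upper estimate $\limsup K_j(z_0,z_0)\le K(z_0,z_0)$ follows from a standard extremal-plus-Fatou argument: letting $f_j$ realize $K_j(z_0,z_0)$ with $\|f_j\|_j=1$, the bound $\|f_j\|_{L^2(\Omega)}\le 1$ gives a locally uniform subsequential limit $f\in\mathcal O(\Omega)$, and Fatou's lemma applied to $|f_{j_k}|^2 e^{-\varphi_{j_k}}\to|f|^2 e^{-\varphi}$ yields $\|f\|_\varphi\le 1$, whence $K(z_0,z_0)\ge|f(z_0)|^2=\lim K_{j_k}(z_0,z_0)$.

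The lower estimate is the core of the proof. Let $F$ be the $A^2(\Omega,\varphi)$-extremal with $F(z_0)=\sqrt{K(z_0,z_0)}$ and $\|F\|_\varphi=1$. Given $\varepsilon>0$, pick nested compact sets $S\subset\operatorname{int} S'$ with $S'\subset\Omega\setminus E$ and $z_0\in\operatorname{int} S$, chosen so that $\int_{\Omega\setminus S}|F|^2 e^{-\varphi}<\varepsilon$ (possible since $E$ has Lebesgue measure zero, so $\int_{\Omega\setminus E}|F|^2 e^{-\varphi}=1$). Let $\chi$ be a smooth cutoff equal to $1$ on a neighborhood of $S$ and vanishing outside $S'$. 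Since $\Omega$ is hyperconvex, hence pseudoconvex, H\"ormander's $L^2$-estimate with weight
\[
\Phi_j(z)=\varphi_j(z)+2n\log|z-z_0|+|z|^2,
\]
which satisfies $i\partial\bar\partial\Phi_j\ge i\partial\bar\partial|z|^2$, yields $u_j\in L^2_{\operatorname{loc}}(\Omega)$ with $\bar\partial u_j=F\bar\partial\chi$ and
\[
\int_\Omega\frac{|u_j|^2}{|z-z_0|^{2n}}\,e^{-\varphi_j-|z|^2}\,d\lambda \;\le\; \int_{S'\setminus S}\frac{|F\bar\partial\chi|^2}{|z-z_0|^{2n}}\,e^{-\varphi_j-|z|^2}\,d\lambda.
\]
Finiteness of the left side forces $u_j(z_0)=0$, since $|z-z_0|^{-2n}$ is not locally integrable near $z_0$. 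On the compact set $S'\setminus S\subset\Omega\setminus E$, the right-hand integrand is dominated for $j$ large by a constant multiple of $\phi_{S'\setminus S}\in L^1$ via condition $(\ast)$, so by dominated convergence it tends to a constant multiple of $\int_{S'\setminus S}|F|^2 e^{-\varphi}\le\varepsilon$. Since $e^{-|z|^2}$ and $|z-z_0|^{-2n}$ are bounded below on $\Omega$, this gives $\|u_j\|_j^2=O(\varepsilon)$. Setting $f_j:=\chi F-u_j$ produces a holomorphic function with $f_j(z_0)=F(z_0)$, and $\|\chi F\|_j^2\to\|\chi F\|_\varphi^2\le 1$ by condition $(\ast)$ again, so $\|f_j\|_j\le 1+O(\sqrt\varepsilon)$. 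Thus $\liminf K_j(z_0,z_0)\ge|F(z_0)|^2/(1+O(\sqrt\varepsilon))^2$, and letting $\varepsilon\to 0$ completes the lower bound.

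The main obstacle is designing the cutoff $\chi$ so that $\operatorname{supp}\bar\partial\chi$ simultaneously lies in $\Omega\setminus E$ (to invoke condition $(\ast)$) and carries only a small fraction of the $e^{-\varphi}$-mass of $|F|^2$. This is precisely why the pointwise limit is first established off $E$; points of $E$ are then absorbed by the sesquianalytic identity argument of the first paragraph. The added pole weight $2n\log|z-z_0|$, whose role is to force $u_j(z_0)=0$, is harmless because $\operatorname{supp}\bar\partial\chi$ lies at positive distance from $z_0$, so $|z-z_0|^{-2n}$ stays bounded on the right-hand side of the H\"ormander estimate.
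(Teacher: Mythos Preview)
There is a genuine gap in the lower-bound step. You claim that $\|u_j\|_j^2=O(\varepsilon)$, but the implicit constant hides the factor $\sup_{S'\setminus S}|\bar\partial\chi|^2$, and this depends on $\varepsilon$ through your choice of $S,S',\chi$. As $\varepsilon\to 0$ the compact set $S$ must exhaust $\Omega\setminus E$, so $S'\setminus S$ is squeezed against $E\cup\partial\Omega$ and $|\bar\partial\chi|$ blows up. Concretely, take $\Omega$ the unit disc, $E=\{0\}$, $\varphi=c\log|z|$ with $0<c<2$, $\varphi_j=c_j\log|z|$ with $c_j\to c$: condition $(\ast)$ holds, and the extremal $F$ for $K(z_0,\cdot)$ satisfies $|F|^2 e^{-\varphi}\asymp|z|^{-c}$ near $0$. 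A cutoff with $\chi\equiv 1$ near $z_0$ and $\chi\equiv 0$ near $0$ must satisfy
\[
\int_\Omega |\bar\partial\chi|^2|F|^2 e^{-\varphi}\;\gtrsim\;\int_0^{|z_0|}|\chi'(r)|^2 r^{1-c}\,dr\;\ge\;\frac{c}{|z_0|^c}
\]
by a direct Euler--Lagrange computation, and this lower bound is positive and independent of $\varepsilon$. Your H\"ormander weight $|z|^2$ only contributes the Euclidean metric on $\operatorname{supp}\bar\partial\chi$, so the right-hand side of your estimate cannot be driven to zero; the argument yields only $\liminf K_j(z_0,z_0)\ge K(z_0,z_0)/(1+\sqrt{B})^2$ with $B>0$ fixed.

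The paper's remedy (Lemma~\ref{lm:keyLemma}) is exactly the point: instead of a geometric cutoff, one takes $\varrho\in PSH^-$ with $\varrho^{-1}(-\infty)=E$, sets $\psi=-\log(-\varrho)$, and uses $\lambda_\varepsilon=\chi(\log(-\psi)+\log\varepsilon)$. Then $|\bar\partial\lambda_\varepsilon|^2_{i\partial\bar\partial\psi}\le C/\psi^2\le C\varepsilon^2$ on the support of $\bar\partial\lambda_\varepsilon$, and the Donnelly--Fefferman estimate (Theorem~\ref{th:Berndtsson-Donnelly-Fefferman}) with the auxiliary weight $\psi/2$ converts this into an honest factor $\varepsilon^2$ in the bound for $\|u_{j,\varepsilon}\|_j^2$, uniformly in the geometry. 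The smallness comes from measuring $\bar\partial\lambda_\varepsilon$ in the intrinsic metric $i\partial\bar\partial\psi$ rather than the Euclidean one. Hyperconvexity is then used separately (Lemma~\ref{lm:simple}) to pass from a relatively compact subdomain back to $\Omega$ with a controlled loss; note that your argument never uses hyperconvexity beyond pseudoconvexity, which is another symptom that something is missing. Your normal-families/identity-principle reduction of off-diagonal convergence to diagonal convergence on $\Omega\setminus E$ is fine and is in fact slicker than the paper's truncation argument, but it rests on a diagonal lower bound that, as written, does not go through.
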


        \begin{corollary}\label{coro:weighted_Continue}
        Let $\Omega\subset {\mathbb C}^n$ be a bounded hyperconvex domain and $\varphi_t\in PSH^-(\Omega)$, $-1<t<1$. Let $K_t$ denote the Bergman kernel with weight $\varphi_t$ on $\Omega$. Suppose  $e^{\varphi_t(z)}$ is continuous in $(z,t)\in {\Omega} \times (-1,1)$. Then $K_t(z,w)$ is continuous in $t$.
       \end{corollary}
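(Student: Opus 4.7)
The plan is to deduce the corollary directly from Theorem~\ref{th:weighted_Continue} by reducing to sequential continuity. Fix $t_0\in(-1,1)$. It suffices to show that for every sequence $t_j\to t_0$ the Bergman kernels $K_{t_j}(z,w)$ converge locally uniformly on $\Omega\times\Omega$ to $K_{t_0}(z,w)$. Setting $\varphi_j:=\varphi_{t_j}$ and $\varphi:=\varphi_{t_0}$, the task is to verify the two hypotheses of Theorem~\ref{th:weighted_Continue}, namely almost-everywhere convergence $\varphi_j\to\varphi$ and condition~$(\ast)$.

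The a.e.\ convergence is automatic: joint continuity of $(z,t)\mapsto e^{\varphi_t(z)}$ gives $e^{\varphi_j(z)}\to e^{\varphi(z)}$ for every $z\in\Omega$, and since $\log:[0,\infty)\to[-\infty,\infty)$ is continuous (including at $0$, which corresponds to $-\infty$), we obtain $\varphi_j(z)\to\varphi(z)$ pointwise, hence almost everywhere.

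For condition~$(\ast)$, put $E:=\varphi^{-1}(-\infty)=\{z\in\Omega:e^{\varphi(z)}=0\}$. Because $e^{\varphi}$ is continuous on $\Omega$ the set $E$ is closed, and because $\varphi\in PSH^-(\Omega)$ itself is psh and not identically $-\infty$ near any point (barring the degenerate case where the Bergman space is trivial and there is nothing to prove), $\varphi$ serves as the local defining potential showing that $E$ is complete pluripolar. Now let $S\subset\Omega\setminus E$ be compact. Then $e^{\varphi}$ is continuous and strictly positive on $S$, so it attains a positive minimum $2\delta>0$. Since $e^{\varphi_t}$ is jointly continuous on the compact neighborhood $S\times[t_0-\eta,t_0+\eta]$ for small $\eta$, we have $e^{\varphi_j}\ge\delta$ on $S$ for all $j$ sufficiently large; equivalently $e^{-\varphi_j}\le 1/\delta$ on $S$. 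Taking $\phi_S\equiv 1/\delta\in L^1(S)$ verifies $(\ast)$.

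Theorem~\ref{th:weighted_Continue} then yields $K_{t_j}(z,w)\to K_{t_0}(z,w)$ locally uniformly on $\Omega\times\Omega$; since the sequence was arbitrary, $K_t(z,w)$ is continuous in $t$. There is no real obstacle in this argument once Theorem~\ref{th:weighted_Continue} is granted; the only point requiring a little care is the closedness of $E$, which here is free from the continuity of the exponential weight (in contrast to the usc-but-not-continuous setting where $\varphi^{-1}(-\infty)$ is only $G_\delta$ in general).
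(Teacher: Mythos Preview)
Your proof is correct and follows the same approach as the paper: the corollary is not given a separate proof there, but is meant to follow from Theorem~\ref{th:weighted_Continue} via Example~(1) in the introduction, which is exactly the verification of condition~$(\ast)$ you carry out. Your write-up simply adds the routine details (sequential continuity, closedness of $E$, the uniform positive lower bound on $e^{\varphi_j}$ over $S$) that the paper leaves implicit.
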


       The proof of Theorem \ref{th:weighted_Continue} relies heavily on the $L^2-$estimates of Donnelly-Fefferman (cf. \cite{DonnellyFefferman}, see also \cite{Berndtsson01}). The key observation is an approximation result for holomorphic functions (see Lemma \ref{lm:keyLemma}), which also has applications in singularity theory of psh functions, including a new proof of Berndtsson's openness theorem (cf. \cite{BerndtssonOpenness}).

    In order to study the H\"older continuous parameter dependence of the weighted Bergman kernels, we need two fundamental concepts from singularity theory of psh functions.

           \begin{definition}[see e.g., \cite{DemaillyKollar}]
   Let $\varphi$ be a psh function in a neighborhood of\/ $0$. The log canonical threshold\/ $($or complex singularity exponent\,$)$ $c_0(\varphi)$ of $\varphi$ at\/ $0$ is defined as
   $$
   c_0(\varphi):=\sup\{c\ge 0: e^{-c\varphi}\ { is\ } L^1\ { in \ a\ neighborhood\ of\ } 0\}.
   $$
  \end{definition}

   \begin{definition}
 The {\L}ojasiewicz exponent of a psh function $\varphi$ with an isolated singularity at\/ $0$ is defined as
$$
\mathcal L_0(\varphi)  =  \inf \left\{c\ge 0: e^{\varphi(z)}\ge {\rm const}_c\, |z|^c { \ in\ a \ neighborhood\ of\ } 0\right\}.
$$
\end{definition}

By convention, we set $\mathcal L_0(\varphi) =\infty$ if the previous set is empty.

  \begin{theorem}\label{th:Weighted_Holder}
 Let $\Omega$ be a bounded pseudoconvex domain with\/ $0\in \Omega$. Let $\{\varphi_t:-1<t<1\}$ be a family of negative psh functions on ${\Omega}$ such that
  $e^{\varphi_0}$ is a continuous function with an isolated zero at\/ $0$, $c_0(\varphi_0)>1$,  and
 $$
 |e^{\varphi_t(z)}-e^{\varphi_0(z)}|\le C |t|^\alpha,\ \ \ z\in \Omega,
 $$
 where $C>0$ and $0<\alpha\le 1$. Let $K_t$ denote the Bergman kernel with weight $\varphi_t$ on $\Omega$. Then
  \begin{enumerate}
  \item $K_t(w)$ is H\"older continuous of order $\beta$ at $t=0$ for every $\beta<\frac{c_0(\varphi_0)-1}{c_0(\varphi_0)+1}\alpha$, and every  $w\in \Omega\backslash \{0\}$.
   \item $K_t(0)$ is H\"older continuous of order $\beta$ at $t=0$ for every
   $
   \beta<\frac{\eta_0}{1+\eta_0\tau_0}\alpha,
   $
   where
   $$
    \eta_0=\min\left\{\frac{1}{\mathcal L_0(\varphi)},\frac{c_0(\varphi_0)-1}{2n}\right\},\ \ \ \tau_0=\min\left\{\frac{c_0(\varphi_0)-1}{2\eta_0}-n,1\right\}.
   $$
  \end{enumerate}
\end{theorem}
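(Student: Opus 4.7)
The plan is to establish two-sided inequalities $|K_t(w)-K_0(w)|\le C|t|^\beta$ via the variational characterization $K_s(w)=\sup\{|F(w)|^2:F\in A^2(\Omega,\varphi_s),\ \|F\|_{\varphi_s}\le 1\}$. Let $g_s$ denote the normalized extremal function, so that $g_s(w)=\sqrt{K_s(w)}$ and $\|g_s\|_{\varphi_s}=1$. The key step is to construct, from $g_0$, a holomorphic function $F_t\in A^2(\Omega,\varphi_t)$ with $F_t(w)=g_0(w)$ and $\|F_t\|_{\varphi_t}^2\le 1+O(|t|^\beta)$, whence $K_t(w)\ge K_0(w)(1-O(|t|^\beta))$. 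The reverse inequality follows by exchanging the roles of $\varphi_0$ and $\varphi_t$; the Hölder bound $|e^{\varphi_t}-e^{\varphi_0}|\le C|t|^\alpha$ is symmetric in $\varphi_0,\varphi_t$ (modulo harmless constants) and produces the same rate.

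To build $F_t$, I would use a cutoff-$\bar\partial$ construction, in the spirit of the Donnelly-Fefferman $L^2$-method highlighted in the introduction. Fix a scale $r=r(t)\to 0$ and a cutoff $\chi_r$ with $\chi_r\equiv 0$ on $B(0,r)$, $\chi_r\equiv 1$ on $\Omega\setminus B(0,2r)$, and $|\bar\partial\chi_r|\lesssim r^{-1}$. For part (1), take $r\ll|w|$, so $\chi_r(w)=1$. Solve $\bar\partial u=g_0\,\bar\partial\chi_r$ by the Donnelly-Fefferman/Hörmander $L^2$-estimate with weight $\varphi_t$, using a singular auxiliary weight concentrated at $w$ to enforce $u(w)=0$; then $F_t:=\chi_r g_0-u$ is holomorphic with $F_t(w)=g_0(w)$. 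The bound on $\|F_t\|_{\varphi_t}^2$ comes from three sources: (a) the weight discrepancy $\int_{\{|z|\ge r\}}|g_0|^2|e^{-\varphi_t}-e^{-\varphi_0}|$, controlled via the identity $e^{-\varphi_t}-e^{-\varphi_0}=(e^{\varphi_0}-e^{\varphi_t})/(e^{\varphi_0}e^{\varphi_t})$ and a lower bound on $e^{\varphi_0}$ on $\{|z|\ge r\}$; (b) the missing mass $\int_{B(0,r)}|g_0|^2 e^{-\varphi_0}$, bounded by Hölder's inequality exploiting $e^{-p\varphi_0}\in L^1_{\rm loc}$ for every $p<c_0(\varphi_0)$; (c) the $\bar\partial$-correction $\|u\|_{\varphi_t}^2\lesssim r^{-2}\int_{\{r\le|z|\le 2r\}}|g_0|^2 e^{-\varphi_t}$, estimated by the same tools.

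In part (1), $w$ is fixed in the bulk so the constants arising from (a) are harmless; the dominant balance is between the weight-discrepancy factor $|t|^\alpha$ and the $L^p$-integrability of $e^{-\varphi_0}$. Applying Hölder with conjugate exponents $p,p'$, $p<c_0(\varphi_0)$, so that the bad-region integral scales as a power of $r$ determined by the gap $c_0(\varphi_0)-1$, and optimizing $r=|t|^\gamma$, I expect the resulting interpolation to yield exactly $\beta<\frac{c_0(\varphi_0)-1}{c_0(\varphi_0)+1}\alpha$.

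Part (2) is the main obstacle. Now $w=0$ is the singularity itself, so a cutoff vanishing near $0$ would destroy the value $g_0(0)$; the construction must be reorganized, e.g.\ using an inner cutoff supported on $B(0,2r)$ equal to $1$ on $B(0,r)$ together with the outer one, or an iterated cutoff on two scales. The Łojasiewicz lower bound $e^{\varphi_0(z)}\ge c|z|^{\mathcal L_0(\varphi)}$ becomes essential: it gives $\inf_{|z|\ge r}e^{\varphi_0}\gtrsim r^{\mathcal L_0(\varphi)}$ and, via the Hölder hypothesis, $e^{\varphi_t}\gtrsim r^{\mathcal L_0(\varphi)}$ on $\{|z|\ge r\}$ provided $r\gg|t|^{\alpha/\mathcal L_0(\varphi)}$. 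Balancing this geometric constraint against the integrability exponent coming from $c_0(\varphi_0)$, through a two-parameter optimization with a secondary cutoff radius absorbing the $\bar\partial$-error, produces the structural form of $\eta_0$ and $\tau_0$ and the announced exponent $\beta<\frac{\eta_0}{1+\eta_0\tau_0}\alpha$. The hardest step is arranging this two-scale optimization so that both local invariants, $c_0(\varphi_0)$ (integrability) and $\mathcal L_0(\varphi)$ (geometric), contribute sharply and neither is wasted.
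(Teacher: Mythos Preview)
Your overall strategy---variational characterization, cutoff plus $\bar\partial$ correction with a Donnelly--Fefferman weight, and a singular factor $|z-w|^{-2n}$ to pin the value at $w$---matches the paper. But there is a real gap in Part (1): you cut off on Euclidean balls $B(0,r)$, and the weight-discrepancy term (a) then needs a lower bound $\inf_{|z|\ge r}e^{\varphi_0}=:m(r)$. Without the \L ojasiewicz exponent you have no quantitative control of $m(r)$ as a power of $r$, so your optimization ``$r=|t|^\gamma$'' cannot be carried out and you will not recover the exponent $\frac{c_0(\varphi_0)-1}{c_0(\varphi_0)+1}\alpha$, which is supposed to depend only on $c_0(\varphi_0)$. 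The paper's fix is to cut off on \emph{sublevel sets of $\varphi_0$} rather than on balls: with $\lambda_{\gamma,\varepsilon}=\chi_\gamma(\log(-\varphi_0)-\log(-\log\varepsilon))$ the support of $\bar\partial\lambda_{\gamma,\varepsilon}$ sits in $\{\varepsilon^\gamma\le e^{\varphi_0}\le\varepsilon\}$, so the lower bound on $e^{\varphi_0}$ is exactly $\varepsilon$ by construction, the weight ratio is $\le (1-C|t|^\alpha/\varepsilon)^{-1}$, and the Chebyshev bound $|\{e^{\varphi_0}<\varepsilon^\gamma\}|\le \varepsilon^{c\gamma}\int e^{-c\varphi_0}$ gives the needed power of $\varepsilon$ from $c_0(\varphi_0)$ alone. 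Also, for the reverse inequality the paper does \emph{not} swap $\varphi_0$ and $\varphi_t$ symmetrically (nothing is assumed about the regularity of $\varphi_t$); it keeps the same $\varphi_0$-based cutoff and solves $\bar\partial$ with weight $\varphi_0$ applied to $g_t=K_t(\cdot,w)/\sqrt{K_t(w)}$.

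For Part (2) your proposed ``two-scale cutoff at $w=0$'' is not what the paper does, and it is not clear it would produce the stated exponent. The paper instead reuses the inequality from Part (1) in the form
\[
|K_t(w)-K_0(w)|\le C\Bigl(|t|^\alpha/\varepsilon+\varepsilon^{(c\gamma-1)/2}/\delta_{\gamma,\varepsilon}(w)^n\Bigr),
\]
where $\delta_{\gamma,\varepsilon}(w)$ is the distance from $w$ to the cutoff annulus. The \L ojasiewicz bound enters only here, to show that the annulus $\{\varepsilon^\gamma\le e^{\varphi_0}\le\varepsilon\}$ is contained in $\{|z|\lesssim\varepsilon^{\gamma/\nu}\}$, whence $\delta_{\gamma,\varepsilon}(w)\ge|w|/2$ for $\varepsilon$ small relative to $|w|$. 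One then chooses $\varepsilon\sim|w|^{1/\eta}$ and bridges from $w$ to $0$ by the elementary uniform Lipschitz estimate $|K_t(w)-K_t(0)|\le C|w|$ (valid because $K_t\le K_\Omega$ is locally bounded near $0$), obtaining a three-term bound in $|w|$ whose optimization over $|w|$ yields $\beta<\frac{\eta_0}{1+\eta_0\tau_0}\alpha$. This ``evaluate at a nearby regular point and walk back'' device replaces your two-scale construction and is where the interplay of $c_0(\varphi_0)$ and $\mathcal L_0(\varphi_0)$ actually occurs.
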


\begin{remark}
Notice that one can choose $\beta$ arbitrarily close to $\alpha$ in case\/ $(1)$, provided $c_0(\varphi_0)$ sufficiently large.
\end{remark}

\begin{definition}\label{def:Holder}
 Let $\left\{\Omega_t:-1<t<1\right\}$ be a family of domains in ${\mathbb C}^n$. Let $\rho$ be a negative continuous function on the total set
 $$
 \Omega=\{(z,t):z\in \Omega_t,t\in (-1,1)\}
 $$
 which satisfies
 $
 \{-\rho_t> \varepsilon\}\subset\subset \Omega_t
 $
  where $\rho_t=\rho(\cdot,t)$, for $\varepsilon>0$ and $t\in (-1,1)$. We say that $\Omega_t$ is $\rho_t-$H\"older continuous of order $\alpha$ over $(-1,1)$ if for each $\gamma>0$ there exist positive numbers $b_\gamma\gg 1$ and $c_\gamma\ll1$ such that
   $$
   \{-\rho_t> b_\gamma\,|t-s|^\alpha\}\subset  \{-\rho_s> \gamma\, |t-s|^\alpha\}
   $$
    for all $t,s\in (-1,1)$ with $|t-s|\le c_\gamma$.
\end{definition}

 Our main result is the following

  \begin{theorem}\label{th:principle}
        Let $\left\{\Omega_t:-1<t<1\right\}$ be a family of bounded domains in ${\mathbb C}^n$. Suppose there exists for every $t\in (-1,1)$ a negative continuous psh exhaustion function $\rho_t$ on $\Omega_t$ such that $\Omega_t$ is $\rho_t-$H\"older continuous of order $\alpha$ over $(-1,1)$.
         Then the Bergman kernel $K_t(z,w)$ of\/ $\Omega_t$ is H\"older continuous of order $\beta$ in $t$ for every $\beta<\alpha$.
                \end{theorem}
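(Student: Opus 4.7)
The plan is to reduce the statement, via polarization of the reproducing kernel, to a pointwise comparison of the on-diagonal kernels
\[
|K_t(w,w)-K_s(w,w)|\le C|t-s|^\beta,\quad w\in S,\ |t-s|<\eta,
\]
for any prescribed $\beta<\alpha$, any compact $S\subset\Omega_{t_0}$, and some $C,\eta>0$; indeed $|K_t(z,w)|\le K_t(z,z)^{1/2}K_t(w,w)^{1/2}$ together with a polarization identity reduces the off-diagonal problem to the diagonal one. For $t,s$ close to $t_0$, set $\lambda=|t-s|^\alpha$, pick a small fixed $\gamma>0$, and introduce
\[
D:=\{z\in\Omega_t:-\rho_t(z)>b_\gamma\lambda\}.
\]
By the $\rho_t$-Hölder condition of Definition \ref{def:Holder}, $D\subset\{-\rho_s>\gamma\lambda\}\subset\subset\Omega_s$, so $D\subset\subset\Omega_t\cap\Omega_s$; and $S\subset D$ once $|t-s|$ is sufficiently small. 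The monotonicity of the Bergman kernel under inclusion yields $K_D(w,w)\ge\max\{K_t(w,w),K_s(w,w)\}$ for $w\in S$, so the problem reduces to the one-sided estimate $K_D(w,w)-K_\tau(w,w)\le C|t-s|^\beta$ with $\tau\in\{t,s\}$.

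This one-sided bound I establish by extending the Bergman extremal of $D$ to $\Omega_\tau$ with a controlled $L^2$ loss, via a standard cutoff-and-$\bar\partial$ construction. Let $h\in A^2(D)$ be the $L^2$-minimal function with $h(w)=1$, so $\|h\|^2_D=K_D(w,w)^{-1}$; the task is to construct $H\in A^2(\Omega_\tau)$ with $H(w)=1$ and $\|H\|^2_{\Omega_\tau}\le \|h\|^2_D(1+C|t-s|^\beta)$. Form the bounded psh function $\varphi_\tau:=-\log(-\rho_\tau/T_\tau)\ge0$ on $\Omega_\tau$ (with $T_\tau=\sup(-\rho_\tau)$); it enjoys the self-bounded-gradient property $i\partial\varphi_\tau\wedge\bar\partial\varphi_\tau\le i\partial\bar\partial\varphi_\tau$. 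Take a smooth cutoff $\chi=\chi_0(\varphi_\tau)$ equal to $1$ on $\{-\rho_\tau>2b_\gamma\lambda\}$ and vanishing on $\{-\rho_\tau<b_\gamma\lambda\}$, so $|\bar\partial\chi|^2\le C\,i\partial\bar\partial\varphi_\tau$ pointwise on the shell $\Sigma=\{b_\gamma\lambda<-\rho_\tau<2b_\gamma\lambda\}$, where it is supported. Apply Berndtsson's twisted Donnelly-Fefferman estimate with weight $\psi=A\varphi_\tau+\phi_w$ on $\Omega_\tau$ --- the part $A\varphi_\tau$ (with $A<1$ to satisfy the self-bounded-gradient constraint) captures the thinness of $\Sigma$, while $\phi_w$ is a bounded psh function whose singularity at $w$ forces the solution $u$ of $\bar\partial u=(\bar\partial\chi)h$ to satisfy $u(w)=0$. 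After converting the weighted $L^2$-bound to an unweighted one (using that $S$ is bounded away from $\Sigma$ and $\partial\Omega_\tau$, and that on $\Sigma$ the weight $e^{-A\varphi_\tau}=(-\rho_\tau/T_\tau)^A\le(2b_\gamma\lambda/T_\tau)^A$), one arrives at $\|u\|^2_{\Omega_\tau}\le C\lambda^A\|h\|^2_D$. Then $H:=\chi h-u$ lies in $A^2(\Omega_\tau)$ with $H(w)=1$ and $\|H\|^2_{\Omega_\tau}\le\|h\|^2_D(1+C'\lambda^{A/2})$, yielding $K_\tau(w,w)\ge K_D(w,w)/(1+C'\lambda^{A/2})$ and hence $K_D(w,w)-K_\tau(w,w)\le C''|t-s|^{A\alpha/2}$. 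Since any $A<1$ is permitted, any $\beta<\alpha$ is attained.

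The principal obstacle is the calibration of the Donnelly-Fefferman weight $\psi$ so that the weighted $L^2$-estimate on $u$ converts to an unweighted bound of the correct order $\lambda^A$: the natural choice $\psi=A\varphi_\tau+\phi_w$ gives a weight that degenerates at $\partial\Omega_\tau$, so control of $u$ on the thin outer annulus $\{-\rho_\tau<b_\gamma\lambda\}$ requires either a further splitting of $\psi$ (separating its singular-at-$w$ part and handling the rest by Hörmander with a weight bounded below) or a stopping-time argument on a smooth exhausting sublevel set $\{-\rho_\tau>\epsilon\}$ with a passage to the limit $\epsilon\downarrow0$. The second main technicality, secondary but nontrivial, is that $\rho_\tau$ is only assumed continuous, so the cutoff $\chi_0(\varphi_\tau)$ and the $\bar\partial$-equation must be set up after regularization of $\rho_\tau$ on interior exhausting sublevel sets, with the final limit passage justified by lower semicontinuity of the Bergman kernels and the fact that $w$ and $S$ lie strictly inside the regularization's domain of validity.
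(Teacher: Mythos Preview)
Your argument, as written, does not reach the full exponent $\beta<\alpha$; it only yields $\beta<\alpha/2$. You obtain $\|u\|^2_{\Omega_\tau}\le C\lambda^A\|h\|^2_D$ and hence $K_D(w,w)-K_\tau(w,w)\le C''|t-s|^{A\alpha/2}$, and then assert that ``since any $A<1$ is permitted, any $\beta<\alpha$ is attained.'' But $A\alpha/2<\alpha/2$ for every $A<1$, so you have proved at most H\"older continuity of order $\alpha/2$. The missing ingredient is a \emph{tail estimate} on the extremal function over the boundary shell: you use the trivial bound $\int_\Sigma|h|^2\le\|h\|^2_D$, whereas the paper shows (Proposition~\ref{prop:BergmanIntegral}) that for the Bergman kernel
\[
\int_{-\rho\le\varepsilon}|K_\Omega(\cdot,w)|^2\le C(\varepsilon/a)^r,\qquad r<1,
\]
by an \emph{iteration} that bootstraps the naive exponent $r/2$ up to $r$. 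This extra factor of $\varepsilon^r$ on the right-hand side of the $\bar\partial$-estimate is precisely what closes the gap between $\alpha/2$ and $\alpha$. Without it your Donnelly--Fefferman step gives only one power of $\lambda^{A}$ in $\|u\|^2$, i.e.\ $\lambda^{A/2}$ in $\|u\|$.

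A second, independent issue is the polarization step. The Cauchy--Schwarz bound $|K_t(z,w)|\le K_t(z)^{1/2}K_t(w)^{1/2}$ does not reduce $|K_t(z,w)-K_s(z,w)|$ to diagonal quantities, and there is no algebraic polarization identity expressing $K(z,w)$ in terms of diagonal values. One can pass from diagonal to off-diagonal via the fact that $F(z,w):=K_t(z,w)-K_s(z,w)$ is holomorphic in $z$ and antiholomorphic in $w$, so small on the (totally real) diagonal implies small nearby; but this interpolation argument typically costs a fixed fraction of the H\"older exponent (think of $\delta e^{iNz}$ on the reals), so it is not innocuous. The paper avoids this entirely by working directly with the off-diagonal kernel through the reproducing formula
\[
K_t(z,w)=\int_{\Omega'_{t_0}}K_t(\zeta,w)\,K'_{t_0}(z,\zeta)\,d\zeta,
\]
splitting with a cutoff, and estimating the pointwise value $|u_t(w)|$ (rather than an $L^2$-norm) via the mean value inequality together with Proposition~\ref{prop:BergmanIntegral}. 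This sidesteps both your unweighted-$L^2$ obstacle and the polarization loss.
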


  As a direct consequence, we obtain

  \begin{corollary}\label{coro:exhaustion}
   Let $\Omega\subset {\mathbb C}^n$ be a pseudoconvex domain and $\rho$ a continuous psh exhaustion function on $\Omega$. Let $\Omega_t:=\{z\in \Omega:\rho(z)<t\}$, $t\in {\mathbb R}$. Then  $K_t(z,w)$ is H\"older continuous of order $\alpha$ in $t$ for every $\alpha<1$.
  \end{corollary}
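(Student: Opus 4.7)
The plan is to reduce directly to Theorem \ref{th:principle} by taking the obvious family of exhaustions $\rho_t(z) := \rho(z) - t$ on $\Omega_t$. Since $\rho$ is continuous and psh on $\Omega$, so is each $\rho_t$; by the definition of $\Omega_t$, $\rho_t < 0$ there; and for any $\varepsilon>0$ the set $\{-\rho_t > \varepsilon\} = \{\rho < t-\varepsilon\}$ is relatively compact in $\Omega$, hence in $\Omega_t$, because $\rho$ is an exhaustion. Thus each $\rho_t$ is a negative continuous psh exhaustion of $\Omega_t$, and the total function $\rho(z,t) := \rho(z)-t$ is continuous on the total space.

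The only real verification is the $\rho_t$-H\"older continuity of order $\alpha$ from Definition \ref{def:Holder}, which I would establish for every $\alpha<1$. Fix $\alpha<1$ and $\gamma>0$. If $z$ lies in $\{-\rho_t > b|t-s|^\alpha\}$, then $\rho(z) < t - b|t-s|^\alpha$, whence
$$
-\rho_s(z) \;=\; s-\rho(z) \;>\; b|t-s|^\alpha - (t-s) \;\geq\; b|t-s|^\alpha - |t-s|.
$$
For the right-hand side to exceed $\gamma|t-s|^\alpha$ it is enough that $(b-\gamma)|t-s|^\alpha \geq |t-s|$, i.e.\ $|t-s|^{1-\alpha} \leq b-\gamma$. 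Since $\alpha<1$, taking for instance $b_\gamma := \gamma+1$ and $c_\gamma := 1$ forces this whenever $|t-s|\leq c_\gamma$, which is exactly the required inclusion.

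With the $\rho_t$-H\"older continuity of order $\alpha$ in hand, Theorem \ref{th:principle} yields that $K_t(z,w)$ is H\"older continuous of order $\beta$ in $t$ for every $\beta<\alpha$. Since this construction works for every $\alpha<1$ simultaneously (with the same exhaustion $\rho_t$, only the constants $b_\gamma,c_\gamma$ depending on $\alpha$), $\beta$ can be made arbitrarily close to $1$, and the corollary follows.

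There is really no obstacle to surmount: all the analytic substance is buried in Theorem \ref{th:principle}, and the corollary reduces to recognizing the correct candidate exhaustion $\rho-t$ and carrying out the one-line inequality $|t-s|^{1-\alpha}\to 0$ as $t\to s$. The only point requiring mild care is that the H\"older-continuity inequality in Definition \ref{def:Holder} involves a \emph{loss} from $|t-s|^\alpha$ on the left to $\gamma|t-s|^\alpha$ on the right which must absorb the extra linear term $|t-s|$; this is precisely what the exponent restriction $\alpha<1$ accommodates, and it explains why one cannot hope to reach $\alpha=1$ with this argument.
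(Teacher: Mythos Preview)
Your argument is correct and is exactly the ``direct consequence'' the paper has in mind: the paper gives no separate proof of the corollary, and your choice $\rho_t=\rho-t$ together with the elementary inclusion check is precisely how one deduces it from Theorem~\ref{th:principle}. The only cosmetic points are that the parameter range in Theorem~\ref{th:principle} is $(-1,1)$ rather than $\mathbb{R}$ (handled by a trivial localization/rescaling) and that the constants $b_\gamma,c_\gamma$ you produce do not literally satisfy the informal ``$b_\gamma\gg1$, $c_\gamma\ll1$'' of Definition~\ref{def:Holder}, but those phrases are descriptive rather than restrictive and your inclusion is what the proof of Theorem~\ref{th:principle} actually uses.
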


  We also study in \S\,7 the (optimal) H\"older continuity of $K_t$ in $t$ for a $(-\delta_t)-$H\"older continuous family $\{\Omega_t\}$ of bounded simply-connected planar domains, where $\delta_t$ denotes the boundary distance of $\Omega_t$.

        Diederich-Ohsawa \cite{DidOhsParameter} studied the continuous parameter dependence of the $L^2-$minimal solutions of the $\bar{\partial}-$equations with respect to certain psh weight functions. It would be interesting to know whether similar H\"older continuity holds for the (unweighted) $L^2-$minimal solutions of the $\bar{\partial}-$equations under situations considered here.

         For the proof of Theorem \ref{th:principle}, we use  a nice weighted estimate of the $L^2-$minimal solution of the $\bar{\partial}-$equation due to Berndtsson, together with certain iteration procedure.

\section{Weighted estimates for the $L^2-$minimal solution of the $\bar{\partial}-$equation}

Let $\Omega\subset {\mathbb C}^n$ be a bounded pseudoconvex domain and let $\varphi\in PSH(\Omega)$. By H\"ormander's $L^2-$existence theorem for
the $\bar{\partial}-$equation (cf. \cite{HormanderBook}), we know that for every $\bar{\partial}-$closed $(0,1)-$form $v$ on $\Omega$ with
 $
\int_\Omega |v|^2 e^{-\varphi}<\infty,
$
there exists a solution $u$ to $\bar{\partial}u=v$ such that
$$
\int_\Omega |u|^2 e^{-\varphi} \le C \int_\Omega |v|^2 e^{-\varphi}
$$
where $C>0$ is a constant depending only on $n$ and ${\rm diam}(\Omega)$.
                Let $L^2(\Omega,\varphi)$ denote the Hilbert space of measurable functions $f$ satisfying
                 $$
                 \|f\|^2:= \int_\Omega |f|^2 e^{-\varphi}<\infty.
                 $$
                 We say that $u$ is the (unique) $L^2(\Omega,\varphi)-$minimal solution of the $\bar{\partial}-$equation if $u\bot A^2(\Omega,\varphi)$ in $L^2(\Omega,\varphi)$, i.e., $u$ has minimal norm $\|\cdot\|$ among all solutions.

                 Berndtsson proved that the $L^2(\Omega,\varphi)-$minimal solution satisfies the following estimate which goes back to Donnelly-Fefferman \cite{DonnellyFefferman}.

                 \begin{theorem}[cf. \cite{Berndtsson01}]\label{th:Berndtsson-Donnelly-Fefferman}
Let $\Omega\subset {\mathbb C}^n$ be a bounded pseudoconvex domain and $\varphi$ a $C^2$ psh function on $\Omega$. Suppose $\psi$ is a $C^2$  real function satisfying
\begin{equation}\label{eq:DFCondition}
ri\partial\bar{\partial}(\varphi+\psi)\ge i\partial\psi\wedge \bar{\partial}\psi
\end{equation}
for some $0<r<1$. Then the $L^2(\Omega,\varphi)-$minimal solution of $\bar{\partial}u=v$ satisfies
\begin{equation}\label{eq:Berndtsson-DF}
\int_\Omega |u|^2 e^{\psi-\varphi} \le \frac6{(1-{r})^2} \int_\Omega  |v|_{i\partial\bar{\partial}(\varphi+\psi)}^2 e^{\psi-\varphi}.
\end{equation}
\end{theorem}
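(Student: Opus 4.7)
The plan is to combine H\"ormander's $L^2$-existence theorem applied with the plurisubharmonic weight $\varphi+\psi$, a substitution that transfers the minimality of $u$ to this shifted weight, and an absorption argument powered by \eqref{eq:DFCondition}. Before anything, I would perform the standard approximations: exhaust $\Omega$ by smoothly bounded strongly pseudoconvex subdomains $\Omega_j$, regularize $\varphi$ and $\psi$ to smooth strictly psh functions on a neighborhood of $\overline{\Omega_j}$ (adding an $\epsilon|z|^2$ term so that the curvature becomes strictly positive), and take $v$ smooth with compact support. Both sides of \eqref{eq:Berndtsson-DF} behave well in the limit, and the constant $r$ in \eqref{eq:DFCondition} is stable up to arbitrarily small loss, so it suffices to prove the estimate under these smoothness hypotheses.

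The key substitution is $w := u\,e^\psi$, chosen so that $\int_\Omega |w|^2 e^{-(\varphi+\psi)} = \int_\Omega |u|^2 e^{\psi-\varphi}$, i.e.\ the target quantity becomes the squared $L^2(\Omega,\varphi+\psi)$-norm of $w$. A direct computation gives $\bar\partial w = (v+u\,\bar\partial\psi)\,e^\psi$, and, crucially, $w$ inherits minimality: for every holomorphic $h\in A^2(\Omega,\varphi+\psi) = A^2(\Omega,\varphi)$ (equality as sets since $\psi$ is now bounded),
\[
\int w\,\bar h\, e^{-(\varphi+\psi)} \;=\; \int u\,\bar h\,e^{-\varphi} \;=\; 0,
\]
because $u \perp A^2(\Omega,\varphi)$ in $L^2(\Omega,\varphi)$. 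Since \eqref{eq:DFCondition} forces $i\partial\bar\partial(\varphi+\psi)\ge 0$, H\"ormander's estimate applied to the minimal solution $w$ with weight $\varphi+\psi$ yields
\[
\int_\Omega|u|^2 e^{\psi-\varphi} \;\le\; \int_\Omega |v+u\bar\partial\psi|^2_{i\partial\bar\partial(\varphi+\psi)}\,e^{\psi-\varphi}.
\]

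The last step is to separate the two pieces on the right. Using the elementary inequality $|v+u\bar\partial\psi|^2 \le (1+\tau)|v|^2 + (1+1/\tau)\,|u|^2\,|\bar\partial\psi|^2$ in the $i\partial\bar\partial(\varphi+\psi)$-metric, together with the dual form of \eqref{eq:DFCondition} (namely $|\bar\partial\psi|^2_{i\partial\bar\partial(\varphi+\psi)}\le r$, which is precisely the rank-one positivity restatement of the hypothesis), I obtain
\[
\int|u|^2 e^{\psi-\varphi} \;\le\; (1+\tau)\int|v|^2_{i\partial\bar\partial(\varphi+\psi)}\,e^{\psi-\varphi} \;+\; (1+1/\tau)\,r\int|u|^2 e^{\psi-\varphi}.
\]
Choosing $\tau$ with $(1+1/\tau)r<1$ (optimally $\tau\sim r/(1-r)$) absorbs the tail into the left-hand side and yields a constant of order $(1-r)^{-2}$; tightening to the stated $6/(1-r)^2$ is then a routine optimization. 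The main obstacle in the argument is the orthogonality computation in the middle paragraph: everything hinges on the clean identity $w\bar h\, e^{-(\varphi+\psi)} = u\bar h\, e^{-\varphi}$, the single place where the minimality of $u$ enters, and without it the substitution would only produce some (non-minimal) solution whose $L^2(\Omega,\varphi+\psi)$-norm is uncontrolled. Care is also needed in the approximation step to ensure that \eqref{eq:DFCondition} survives smoothing with a constant $r' < 1$ arbitrarily close to $r$.
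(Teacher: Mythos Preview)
The paper does not give its own proof of this theorem: it is quoted verbatim from \cite{Berndtsson01} and used as a black box. Your sketch is essentially the argument Berndtsson gives there, so there is nothing to compare against in the present paper.

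As for correctness, your outline is sound. The substitution $w=u e^\psi$ together with the identity $\int w\bar h\,e^{-(\varphi+\psi)}=\int u\bar h\,e^{-\varphi}$ is exactly the mechanism that lets the $L^2(\Omega,\varphi)$-minimality of $u$ become $L^2(\Omega,\varphi+\psi)$-minimality of $w$; H\"ormander's estimate with weight $\varphi+\psi$ then gives $\int|u|^2e^{\psi-\varphi}\le \int|v+u\bar\partial\psi|^2_{i\partial\bar\partial(\varphi+\psi)}e^{\psi-\varphi}$; and the dual reformulation $|\bar\partial\psi|^2_{i\partial\bar\partial(\varphi+\psi)}\le r$ of \eqref{eq:DFCondition} allows the absorption. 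One minor remark: your suggested choice $\tau\sim r/(1-r)$ sits on the boundary where $(1+1/\tau)r=1$; you need $\tau$ strictly larger, e.g.\ $\tau=2r/(1-r)$ already yields a constant $\le 4/(1-r)^2$, comfortably under the stated $6/(1-r)^2$.
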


He also proved the following

\begin{theorem}[cf. \cite{Berndtsson97}]\label{th:Berndtsson97}
 Let $\Omega$ be a bounded pseudoconvex domain and $\varphi\in PSH(\Omega)$.  Let $u$ be the $L^2(\Omega,\varphi)-$minimal solution of $\bar{\partial}u=v$. Let $\omega$ be a positive continuous $(1,1)-$form on $\Omega$. Then
 $$
 \int_\Omega |u|^2 e^{-\varphi}\Psi\le \int_\Omega |v|^2_\omega e^{-\varphi}\Psi
 $$
 for all $C^2$ positive functions $\Psi$ on $\Omega$ such that
 $$
 i\partial\bar{\partial}\Psi\le \Psi (i\partial\bar{\partial}\varphi-\omega).
 $$
\end{theorem}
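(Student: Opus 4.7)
The plan is to apply an $L^{2}$-estimate for $\bar\partial$ with respect to the twisted weight $\tilde\varphi:=\varphi-\log\Psi$, and then to convert it into the desired estimate by the same Bochner--Kodaira integration-by-parts that underlies Theorem~\ref{th:Berndtsson-Donnelly-Fefferman}. First I would compute
\[
i\partial\bar\partial\tilde\varphi \;=\; i\partial\bar\partial\varphi-\frac{i\partial\bar\partial\Psi}{\Psi}+\frac{i\partial\Psi\wedge\bar\partial\Psi}{\Psi^{2}},
\]
and observe that the hypothesis $i\partial\bar\partial\Psi\le\Psi(i\partial\bar\partial\varphi-\omega)$ rearranges to $i\partial\bar\partial\varphi-i\partial\bar\partial\Psi/\Psi\ge\omega$, hence
\[
i\partial\bar\partial\tilde\varphi \;\ge\; \omega+i\partial\log\Psi\wedge\bar\partial\log\Psi .
\]
This has precisely the Donnelly--Fefferman shape of condition \eqref{eq:DFCondition}, with auxiliary function $\psi=-\log\Psi$ and ``background'' positive form $\omega$; the extra piece $i\partial\log\Psi\wedge\bar\partial\log\Psi$ will be what lets us reconcile the two weightings.

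Next, I would use the $L^{2}(\Omega,\varphi)$-minimality of $u$ to write $u=\bar\partial^{*}\alpha$ with $\alpha=Nv$ the $\bar\partial$-Neumann potential of $v$ in $L^{2}(\Omega,\varphi)$, so that $\bar\partial\alpha=0$. The key manipulation is the identity
\[
\int_{\Omega}|u|^{2}e^{-\varphi}\Psi \;=\; \langle \bar\partial^{*}\alpha,\,\Psi u\rangle_{e^{-\varphi}} \;=\; \langle \alpha,\,\bar\partial(\Psi u)\rangle_{e^{-\varphi}} \;=\; \langle \alpha,\,\Psi v\rangle_{e^{-\varphi}}+\langle \alpha,\,u\,\bar\partial\Psi\rangle_{e^{-\varphi}} .
\]
The first term is estimated by Cauchy--Schwarz in the $\omega$-metric against $\int|v|_{\omega}^{2}e^{-\varphi}\Psi$; the second, ``cross'' term is absorbed by a Cauchy--Schwarz-with-$\varepsilon$ argument using the additional positive piece $i\partial\log\Psi\wedge\bar\partial\log\Psi$ in the curvature inequality above. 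Rearranging and letting $\varepsilon$ approach the optimal parameter yields the sharp constant $1$, exactly as in Berndtsson's derivation of \eqref{eq:Berndtsson-DF} from \eqref{eq:DFCondition}.

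The main obstacle is precisely the absorption of the $\bar\partial\Psi$ cross-term. The $L^{2}(\Omega,\varphi)$-minimality (rather than $L^{2}(\Omega,\varphi-\log\Psi)$-minimality) is what affords the representation $u=\bar\partial^{*}\alpha$ with $\bar\partial\alpha=0$, but the estimate is stated in the $\Psi$-twisted norm; the mismatch of weights is what produces the term $\langle\alpha,u\,\bar\partial\Psi\rangle_{e^{-\varphi}}$, and handling it is only possible because the hypothesis on $\Psi$ gives us exactly the ``surplus'' curvature $i\partial\log\Psi\wedge\bar\partial\log\Psi$ needed to dominate $|\bar\partial\log\Psi|_{\omega}^{2}$. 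A standard regularization of $\varphi$ and $\Psi$ together with exhaustion of $\Omega$ by smoothly bounded strongly pseudoconvex subdomains then removes the smoothness assumptions and delivers the theorem as stated.
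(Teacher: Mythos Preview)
The paper does not prove Theorem~\ref{th:Berndtsson97}; it is merely quoted from \cite{Berndtsson97} as background for the corollary that follows. So there is no ``paper's own proof'' to compare against.

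That said, your outline is exactly Berndtsson's argument and it does go through with sharp constant $1$. The one point you leave vague---the absorption of the cross term---can be made precise as follows. Writing $w=\sum_j(\partial_j\log\Psi)\alpha_{\bar j}$, the Bochner--Kodaira inequality for the weight $\tilde\varphi=\varphi-\log\Psi$ (using $\bar\partial\alpha=0$ and $\bar\partial^*_{\tilde\varphi}\alpha=u-w$) together with your curvature bound $i\partial\bar\partial\tilde\varphi\ge\omega+i\partial\log\Psi\wedge\bar\partial\log\Psi$ gives
\[
\int_\Omega|u-w|^2\Psi e^{-\varphi}\;\ge\;\int_\Omega\omega(\alpha,\alpha)\,\Psi e^{-\varphi}+\int_\Omega|w|^2\Psi e^{-\varphi},
\]
so the $|w|^2$ terms cancel and one obtains $A-2\,\mathrm{Re}\!\int u\bar w\,\Psi e^{-\varphi}\ge C$, with $A=\int|u|^2\Psi e^{-\varphi}$ and $C=\int\omega(\alpha,\alpha)\Psi e^{-\varphi}$. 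Your integration-by-parts identity reads $A=\mathrm{Re}\!\int\langle\alpha,v\rangle\Psi e^{-\varphi}+\mathrm{Re}\!\int u\bar w\,\Psi e^{-\varphi}$. Eliminating the cross term between the two relations and applying Cauchy--Schwarz $|\langle\alpha,v\rangle|\le|v|_\omega\,\omega(\alpha,\alpha)^{1/2}$ yields $(A+C)/2\le\sqrt{BC}$ with $B=\int|v|_\omega^2\Psi e^{-\varphi}$, hence $A\le 2\sqrt{BC}-C\le B$ by AM--GM. No $\varepsilon$-parameter is needed; the constant $1$ drops out directly. With this clarification your proposal is correct.
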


As a direct consequence, we obtain
\begin{corollary}
Let $\Omega$ be a bounded pseudoconvex domain and $\varphi\in PSH(\Omega)$. Let $\psi$ be a $C^2$ psh function on $\Omega$
 which satisfies $ri\partial\bar{\partial}\psi\ge i\partial\psi\wedge \bar{\partial}\psi$ for some $0<r<1$. Then  the $L^2(\Omega,\varphi)-$minimal solution satisfies
 \begin{equation}\label{eq:L2Minimal}
 \int_\Omega |u|^2 e^{-\psi-\varphi}\le \frac{1}{1-r} \int_\Omega |v|^2_{i\partial\bar{\partial}\psi} e^{-\psi-\varphi}.
 \end{equation}
\end{corollary}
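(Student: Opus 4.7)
The plan is to apply Theorem \ref{th:Berndtsson97} with the substitutions $\Psi := e^{-\psi}$ and $\omega := (1-r)\,i\partial\bar\partial\psi$. The choice of $\Psi$ is engineered so that $e^{-\varphi}\Psi = e^{-\varphi-\psi}$, which matches the weight on both sides of the target inequality, while the factor $(1-r)$ in $\omega$ is the exact amount of curvature that survives once the loss from the Donnelly--Fefferman-type condition $ri\partial\bar\partial\psi \ge i\partial\psi\wedge\bar\partial\psi$ has been absorbed.

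First I would compute the complex Hessian of $\Psi$ directly from the chain rule, obtaining
$$i\partial\bar\partial\Psi \;=\; e^{-\psi}\bigl(-i\partial\bar\partial\psi + i\partial\psi\wedge\bar\partial\psi\bigr).$$
Next I would verify the hypothesis $i\partial\bar\partial\Psi \le \Psi\,(i\partial\bar\partial\varphi - \omega)$ of Theorem \ref{th:Berndtsson97}. After cancelling the positive factor $e^{-\psi}$, this reduces to
$$i\partial\psi\wedge\bar\partial\psi \;\le\; i\partial\bar\partial\varphi + r\,i\partial\bar\partial\psi,$$
which is immediate: $\varphi$ is psh so $i\partial\bar\partial\varphi\ge 0$, and the hypothesis on $\psi$ says exactly $i\partial\psi\wedge\bar\partial\psi \le r\,i\partial\bar\partial\psi$.

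With the hypothesis verified, Theorem \ref{th:Berndtsson97} yields
$$\int_\Omega |u|^2 e^{-\varphi-\psi} \;\le\; \int_\Omega |v|^2_{(1-r)\,i\partial\bar\partial\psi}\, e^{-\varphi-\psi} \;=\; \frac{1}{1-r}\int_\Omega |v|^2_{i\partial\bar\partial\psi}\, e^{-\varphi-\psi},$$
which is precisely \eqref{eq:L2Minimal}. I do not anticipate any serious obstacle; the corollary is essentially a one-line substitution, and the only delicate point is interpreting the curvature inequality when $\varphi$ is merely psh, but since $\varphi$ enters only through the nonnegative current $i\partial\bar\partial\varphi$, dropping that term preserves the inequality and reduces the check to a computation involving the $C^2$ function $\psi$ alone.
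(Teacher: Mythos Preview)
Your proposal is correct and follows exactly the paper's own argument: set $\Psi=e^{-\psi}$, $\omega=(1-r)\,i\partial\bar\partial\psi$, compute $i\partial\bar\partial\Psi=\Psi(i\partial\psi\wedge\bar\partial\psi-i\partial\bar\partial\psi)\le -\Psi\omega\le \Psi(i\partial\bar\partial\varphi-\omega)$, and invoke Theorem~\ref{th:Berndtsson97}. The only cosmetic difference is that the paper drops the $i\partial\bar\partial\varphi$ term immediately rather than carrying it through the reduction as you do.
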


\begin{proof}
Set
$
\Psi=e^{-\psi}
$
and
$$
\omega=(1-r)i\partial\bar{\partial}\psi.
$$
We then have
$$
i\partial\bar{\partial}\Psi=\Psi(i\partial\psi\wedge \bar{\partial}\psi-i\partial\bar{\partial}\psi)\le -\Psi \omega,
$$
so that Theorem \ref{th:Berndtsson97} applies.
\end{proof}

\begin{remark}
 Following a suggestion of Blocki \cite{BlockCRvsMA}, we may deal with the case when $\psi$ is not $C^2$:  $|v|^2_{i\partial\bar{\partial}\psi}$ should be replaced by\/ {\rm any\/} non-negative locally bounded function $H$ such that
 $$
 i \bar{v}\wedge v\le H i\partial\bar{\partial}\psi
 $$
 holds in the sense of distributions.
 This is very convenient for various applications.
\end{remark}

\section{Proof of Theorem \ref{th:weighted_Continue}}

\begin{proposition}\label{prop:Upper_Semicontinuity}
 Let $U$ be a domain in ${\mathbb C}^n$ and $\{\varphi_j\}$ a sequence of non-positive psh functions on $U$ such that $\varphi_j\rightarrow \varphi$ a.e. on $U$. Let $K_j$ and $K$ denote the Bergman kernel with weight $\varphi_j$ and $\varphi$ respectively. Then
 $$
{\lim\sup}_{j\rightarrow \infty} K_j(z)\le  K(z),\ \ \ z\in U.
$$
\end{proposition}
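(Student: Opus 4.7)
The plan is to run the standard extremal-function argument for Bergman kernels, exploiting that $\varphi_j\le 0$ forces $e^{-\varphi_j}\ge 1$ and hence gives free control of the unweighted $L^2$ norm.

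Fix $z\in U$ and choose a ball $B(z,r)\Subset U$. Since $\varphi_j\le 0$, for any $f\in A^2(U,\varphi_j)$ we have $\int_U|f|^2\le \int_U|f|^2 e^{-\varphi_j}$, and the sub-mean-value inequality gives $|f(z)|^2\le C_r\int_U|f|^2$ with $C_r$ independent of $j$. Hence $K_j(z)\le C_r<\infty$, so for each $j$ there exists an extremal $f_j\in \scripto(U)$ with $\int_U|f_j|^2 e^{-\varphi_j}\le 1$ and $|f_j(z)|^2=K_j(z)$ (pass to a subsequence so $|f_j(z)|^2\to \limsup_j K_j(z)$).

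Next, I would extract a normal-family limit. The bound $\int_U|f_j|^2\le 1$, together with sub-mean-value on balls contained in $U$, shows that $\{f_j\}$ is locally uniformly bounded on $U$. By Montel's theorem, after passing to a further subsequence, $f_j\to f$ locally uniformly on $U$ for some $f\in \scripto(U)$. In particular $|f(z)|^2=\limsup_j K_j(z)$.

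It remains to show $f\in A^2(U,\varphi)$ with $\|f\|_\varphi\le 1$, which then yields $K(z)\ge|f(z)|^2$. Since $f_j\to f$ pointwise and $\varphi_j\to\varphi$ a.e., we have $|f_j|^2 e^{-\varphi_j}\to |f|^2 e^{-\varphi}$ a.e.\ on $U$. Fatou's lemma gives
$$
\int_U|f|^2 e^{-\varphi}\le \liminf_{j\to\infty}\int_U|f_j|^2 e^{-\varphi_j}\le 1,
$$
completing the proof. The only mildly delicate point is the a priori bound $K_j(z)\le C_r$ (needed to extract meaningful extremals), and this is immediate from $\varphi_j\le 0$; the rest is Montel plus Fatou. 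There is no serious obstacle here — this proposition is essentially the ``easy half'' of the convergence theorem, whose ``hard half'' (the matching lower bound) is what requires the Donnelly–Fefferman $L^2$ estimates.
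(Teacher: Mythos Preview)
Your argument is correct and is essentially the same as the paper's proof: both use the a~priori bound $K_j(z)\le C$ coming from $\varphi_j\le 0$ and the sub-mean-value inequality, extract a locally uniformly convergent subsequence of extremal functions via Montel, and then apply Fatou's lemma to the a.e.\ convergent integrands $|f_j|^2 e^{-\varphi_j}$. The only cosmetic difference is normalization: the paper takes $f_k=K_{j_k}(\cdot,w)$ itself (so $\|f_k\|^2_{\varphi_{j_k}}=K_{j_k}(w)$ and $f_k(w)=K_{j_k}(w)$), whereas you take the unit-norm extremal with $|f_j(z)|^2=K_j(z)$; the final inequality $K(z)\ge |f(z)|^2/\|f\|^2_\varphi\ge \limsup_j K_j(z)$ comes out the same way.
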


\begin{proof}
Fix a compact set $S\subset U$ and a point $w\in S$ for a moment. Suppose
$$
K_{j_k}(w)\rightarrow {\lim\sup}_{j\rightarrow \infty} K_j(w)
$$
 as $k\rightarrow \infty$. Set $f_k(z)=K_{j_k}(z,w)$. For every $k$ and $h\in {\mathcal O}(U)$ with
$$
\int_\Omega |h|^2 e^{-\varphi_{j_k}}= 1,
$$
we have
$
\int_\Omega |h|^2 \le 1,
$
so that
$
|h(w)|^2\le {\rm const}_S
$
in view of the mean value inequality.  It follows immediately that
$
K_{j_k}(w)\le {\rm const}_S.
$
Since
$$
\int_U |f_{k}|^2 \le \int_U |f_{k}|^2 e^{-\varphi_{j_k}}=K_{j_k}(w)\le {\rm const}_S,
$$
so there exists a subsequence which is still denoted by $\{f_{k}\}$, such that $f_{k}\rightarrow f\in {\mathcal O}(U)$ locally uniformly. Fatou's lemma yields
\begin{eqnarray*}
\int_U |f|^2 e^{-\varphi} & \le & {\lim\inf}_{k\rightarrow \infty} \int_U |f_k|^2 e^{-\varphi_{j_k}}\\
& = & {\lim}_{k\rightarrow \infty} K_{j_k}(w)\\
& = & {\lim\sup}_{j\rightarrow \infty} K_j(w).
\end{eqnarray*}
Since $f(w)=\lim_{k\rightarrow \infty} f_k(w)={\lim\sup}_{j\rightarrow \infty} K_j(w)$, so we have
$$
K(w)\ge \frac{|f(w)|^2}{\|f\|^2_{L^2(U,\varphi)}}\ge {\lim\sup}_{j\rightarrow \infty} K_j(w).
$$
\end{proof}

\begin{lemma}\label{lm:simple}
 Let $U$ be a bounded hyperconvex domain and $\rho$ a negative continuous psh exhaustion function on $U$. Set $U_\varepsilon=\{\rho<-\varepsilon\}$ for $\varepsilon>0$. Let $\varphi\in PSH^-(U)$. Let $S$ be a compact set in $U$. For every $f\in A^2(U_\varepsilon,\varphi)$ and $w\in S$, there exists $g\in A^2(U,\varphi)$ satisfying $g(w)=f(w)$ and
 $$
\|g\|_{L^2(U,\varphi)}\le (1+{\rm const}_S/|\log \varepsilon|)\|f\|_{L^2(U_\varepsilon,\varphi)}
$$
provided $\varepsilon\le\varepsilon_S\ll 1$.
\end{lemma}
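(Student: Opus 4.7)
The plan is to produce $g$ by a cutoff-and-$\bar\partial$-correction argument, extracting the logarithmic gain from Berndtsson's Donnelly--Fefferman estimate.

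Fix $\varepsilon_S\ll 1$ so that $S\subset U_{2\varepsilon_S}$. For $\varepsilon\le\varepsilon_S$, pick $\eta\colon\reals\to[0,1]$ smooth with $\eta\equiv 1$ on $(-\infty,-2]$ and $\eta\equiv 0$ on $[-1,\infty)$, and set $\chi:=\eta(\rho/\varepsilon)$; then $\chi\equiv 1$ on $U_{2\varepsilon}\supset S$ and $\chi\equiv 0$ outside $U_\varepsilon$. The extension $F:=\chi f$ (by zero) is a smooth function on $U$ with $F(w)=f(w)$, $\|F\|_{L^2(U,\varphi)}\le\|f\|_{L^2(U_\varepsilon,\varphi)}$, and $v:=\bar\partial F=f\,\bar\partial\chi$ supported in $A_\varepsilon:=\{-2\varepsilon\le\rho\le-\varepsilon\}\subset U_\varepsilon$, with $i\bar v\wedge v\le 4|f|^2\varepsilon^{-2}\,i\partial\rho\wedge\bar\partial\rho$.

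I would then solve $\bar\partial u=v$ on $U$ under two constraints. The first is $u(w)=0$: this is enforced by working with the weight $\tilde\varphi:=\varphi+2n\log|z-w|$, since the $L^2(U,\tilde\varphi)$-minimal solution is smooth near $w$ (as ${\rm supp}\,v$ avoids $w$), and finiteness of $\int|u|^2 e^{-\tilde\varphi}$ forces $u(w)=0$. The second is the norm bound $\|u\|_{L^2(U,\varphi)}\le C_S\,|\log\varepsilon|^{-1}\,\|f\|_{L^2(U_\varepsilon,\varphi)}$: this follows from \eqref{eq:L2Minimal}, in the distributional form of the Remark following the corollary, applied with an auxiliary psh weight $\psi$ built from the hyperconvex exhaustion $\rho$ (for instance, $\psi=-\alpha\log(-\rho)$ for a small parameter $\alpha=\alpha(\varepsilon)$, possibly combined with refining $\chi$ to act in the logarithmic scale $-\log(-\rho)$ so that $|\bar\partial\chi|^2$ carries an extra $|\log\varepsilon|^{-2}$ factor). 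The DF condition then reads $r\,i\partial\bar\partial\psi\ge i\partial\psi\wedge\bar\partial\psi$ with $r=\alpha$, the Blocki majorant is $H\lesssim|f|^2/\alpha$ on the support of $\bar\partial\chi$, and the choice $\alpha\sim 1/|\log\varepsilon|$ balances $\varepsilon^\alpha\approx e^{-1}$ against $1/\alpha\approx|\log\varepsilon|$, yielding the target bound after converting the weighted estimate to the unweighted $L^2(U,\varphi)$-norm using the uniform boundedness of $(-\rho)^{-\alpha}$ and $|z-w|^{2n}$ on compact subsets containing $S$.

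Setting $g:=F-u$, we have $\bar\partial g=0$, $g(w)=f(w)$, and
\[
\|g\|_{L^2(U,\varphi)}\;\le\;\|F\|_{L^2(U,\varphi)}+\|u\|_{L^2(U,\varphi)}\;\le\;\bigl(1+C_S/|\log\varepsilon|\bigr)\,\|f\|_{L^2(U_\varepsilon,\varphi)},
\]
as required. The main technical difficulty is translating the weighted DF estimate (which naturally carries a $(-\rho)^\alpha$ factor that degenerates at $\partial U$) into a global $L^2(U,\varphi)$-bound on $u$: one must either carefully calibrate the cutoff in the logarithmic scale $-\log(-\rho)$ together with the parameter tuning $\alpha=\alpha(\varepsilon)$ so that the degeneracy of $(-\rho)^{-\alpha}$ remains controlled on the relevant region, or iterate the cutoff-correction on a nested family of hyperconvex exhaustion levels, controlling the thin boundary shell (where $u$ is already holomorphic) by an auxiliary H\"ormander-type estimate. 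The distributional form of Blocki conveniently sidesteps the $C^2$-regularity issues in $\rho$ and $\psi$, replacing them with the pointwise inequality $i\bar v\wedge v\le H\,i\partial\bar\partial\psi$ in the sense of distributions.
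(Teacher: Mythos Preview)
Your overall framework---cut off $f$ and correct by solving $\bar\partial u=f\,\bar\partial\chi$ with a Donnelly--Fefferman estimate, inserting $2n\log|z-w|$ into the weight to force $u(w)=0$---matches the paper exactly. The gap is in the $\bar\partial$-estimate itself.

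You invoke \eqref{eq:L2Minimal} with $\psi=-\alpha\log(-\rho)$. This places the factor $e^{-\psi}=(-\rho)^\alpha$ on both sides; on the left it degenerates at $\partial U$, and what you need is the $L^2(U,\varphi)$-norm of $u$ over \emph{all} of $U$, not merely over compacta where $(-\rho)^{-\alpha}$ is bounded. Neither the tuning $\alpha\sim1/|\log\varepsilon|$ nor the vague iteration you mention resolves this: the minimal solution is controlled only in the weighted space, and nothing in your sketch transfers this to an unweighted bound near $\partial U$. (Your ``uniform boundedness of $(-\rho)^{-\alpha}$ on compact subsets containing $S$'' is a red herring: the integral of $|u|^2e^{-\varphi}$ runs over $U$, not over $S$.)

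The paper sidesteps this cleanly by using Theorem~\ref{th:Berndtsson-Donnelly-Fefferman} (estimate \eqref{eq:Berndtsson-DF}, whose weight is $e^{\psi-\varphi}$) rather than \eqref{eq:L2Minimal}, with the substitutions $\psi\mapsto-\tfrac12\log(-\rho)$ and $\varphi\mapsto\varphi+2n\log|z-w|-\tfrac12\log(-\rho)$. The two copies of $-\tfrac12\log(-\rho)$ \emph{cancel} in $\psi-\varphi$, so the effective weight is exactly $e^{-\varphi-2n\log|z-w|}$ with no boundary degeneracy, while they \emph{add} in $i\partial\bar\partial(\varphi+\psi)$ to supply the Hessian needed for the DF condition \eqref{eq:DFCondition}. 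The cutoff is taken as $\lambda_\varepsilon=\chi\bigl(\log(-\log(-\rho))-\log(-\log\varepsilon)\bigr)$---your ``logarithmic scale'' idea, made precise as a double logarithm---and a direct computation gives $|\bar\partial\lambda_\varepsilon|^2_{i\partial\bar\partial(\varphi+\psi)}\le\mathrm{const}/|\log\varepsilon|^2$ on its support, with no parameter $\alpha$ to balance. The bound $\|u\|_{L^2(U,\varphi)}\le C_S|\log\varepsilon|^{-1}\|f\|_{L^2(U_\varepsilon,\varphi)}$ then follows at once.
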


\begin{proof}
 Without loss of generality, we assume $-\rho<1$. Let $\chi:{\mathbb R}\rightarrow [0,1]$ be a smooth function satisfying $\chi|_{(0,\infty)}=0$ and $\chi|_{(-\infty,-\log 2)}=1$.
 Set
$$
\lambda_\varepsilon=\chi(\log(-\log(-\rho))-\log(-\log\varepsilon)).
$$
Applying Theorem \ref{th:Berndtsson-Donnelly-Fefferman} with $\psi$ and $\varphi$ replaced by $-\frac12 \log(-\rho)$ and $\varphi+2n\log|z-w|-\frac12 \log(-\rho)$ respectively, we then obtain a solution $u_{\varepsilon}$ of  $\bar{\partial} u= f \bar{\partial}\lambda_\varepsilon$ on $U$ satisfying
\begin{eqnarray*}
\int_U |u_{\varepsilon}|^2 e^{-\varphi-2n\log |z-w|}
 & \le & 24 \int_U |f|^2 |\bar{\partial}\lambda_{\varepsilon}|^2_{ -\frac{i}2\partial\bar{\partial}\log(-\rho)}e^{-\varphi-2n\log |z-w|}\\
& \le & {\rm const}_S\,  |\log \varepsilon|^{-2}\int_{U_\varepsilon}|f|^2 e^{-\varphi}
\end{eqnarray*}
provided $\varepsilon\le\varepsilon_S\ll 1$.
Set $g=\lambda_\varepsilon f-u_\varepsilon$. It is easy to see that $g$ is a desired function.
\end{proof}

\begin{lemma}\label{lm:keyLemma}
 Let $V\subset\subset U$ be two bounded pseudoconvex domains in ${\mathbb C}^n$. Suppose that $\{\varphi_j\}\subset PSH^-(U)$ satisfies condition $(\ast)$ and $\varphi_j$ converges almost everywhere on $U$ to a function $\varphi\in PSH^-(U)$. For every $f\in A^2(U,\varphi)$, there exists $f_j\in A^2(V,\varphi_j)$ such that
 $$
 {\lim\sup}_{j\rightarrow \infty} \|f_j\|_{L^2(V,\varphi_j)}\le \|f\|_{L^2(U,\varphi)}
 $$
  and $\|f_j-f\|_{L^2(V)}\rightarrow 0$.
\end{lemma}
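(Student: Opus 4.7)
My plan is to define $f_j := \chi_{k_j}\,f - u_{k_j,j}$, where $\chi_k$ is a smooth cut-off equal to $1$ on large compact subsets of $U\setminus E$ and $u_{k,j}$ is a $\bar\partial$-correction controlled by Berndtsson's Theorem~\ref{th:Berndtsson-Donnelly-Fefferman}; the two conclusions of the lemma will follow from a diagonal extraction $j\mapsto k_j$.

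First I would produce a global psh defining function of $E$: since $E$ is closed and locally complete pluripolar in the pseudoconvex domain $U$, one can patch the local defining functions (by a Coltoiu-type theorem in Stein domains) into a single $\Psi\in PSH(U)$ with $\Psi\le -1$ and $\Psi^{-1}(-\infty)=E$. Setting $\eta:=-\log(-\Psi)\in PSH(U\setminus E)$, a direct calculation gives the Donnelly-Fefferman inequality $i\partial\bar\partial\eta\ge i\partial\eta\wedge\bar\partial\eta$. Next, fix a smooth $\chi\colon\mathbb{R}\to[0,1]$ with $\chi=0$ on $(-\infty,-1]$ and $\chi=1$ on $[0,\infty)$, and for each integer $k\ge 1$ put
\[
 \chi_k(z)\ :=\ \chi\bigl(\log k-\log(-\Psi(z))\bigr),
\]
extended by $0$ across $E$. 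Then $\chi_k\equiv 1$ on $\{-\Psi\le k\}$, $\chi_k\equiv 0$ on $\{-\Psi\ge ek\}$, and $\bar\partial\chi_k$ is supported in the compact annulus $A_k:=\bar V\cap\{k\le -\Psi\le ek\}\subset U\setminus E$. I would then apply Theorem~\ref{th:Berndtsson-Donnelly-Fefferman} on the pseudoconvex domain $V$ with twist $\psi=\eta$ and underlying weight $\varphi_j+\eta$, so that $\varphi_j+2\eta$ plays the role of Berndtsson's $\varphi+\psi$ and \eqref{eq:DFCondition} holds with $r=\tfrac12$; after estimating $|\bar\partial\chi_k|^2_{i\partial\bar\partial(\varphi_j+2\eta)}\le\tfrac12\|\chi'\|_\infty^2$ this yields a solution $u_{k,j}$ of $\bar\partial u=f\,\bar\partial\chi_k$ on $V$ with
\[
 \int_V|u_{k,j}|^2 e^{-\varphi_j}\ \le\ C\int_{A_k}|f|^2 e^{-\varphi_j},
\]
where $C$ depends only on $\chi$ (non-smoothness of $\Psi$ and $\varphi_j$ being handled by standard regularization, as in the remark following the corollary of Section~2).

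For each fixed $k$ the sets $A_k$ and $T_k:=\bar V\cap\{-\Psi\le ek\}$ are compact subsets of $U\setminus E$ on which $|f|$ is bounded (as $f$ is holomorphic in $U$), and by condition~$(\ast)$ there is $\phi_{T_k}\in L^1(T_k)$ with $e^{-\varphi_j}\le\phi_{T_k}$ on $T_k$ for $j$ large, so that $|f|^2 e^{-\varphi_j}$ is dominated by $|f|^2\phi_{T_k}\in L^1(T_k)$. Dominated convergence therefore gives $\int_S|f|^2 e^{-\varphi_j}\to\int_S|f|^2 e^{-\varphi}$ for $S\in\{A_k,T_k\}$, and hence for each fixed $k$,
\[
 \limsup_{j\to\infty}\int_V|u_{k,j}|^2 e^{-\varphi_j}\ \le\ C\int_{A_k}|f|^2 e^{-\varphi},\qquad \limsup_{j\to\infty}\int_V\chi_k^2|f|^2 e^{-\varphi_j}\ \le\ \|f\|_{L^2(U,\varphi)}^2.
\]
As $k\to\infty$ the first right-hand side tends to $0$, since $A_k$ has vanishing Lebesgue measure (its pointwise intersection lies in $E\cap\bar V$, which is pluripolar and hence of measure zero) and $|f|^2 e^{-\varphi}\in L^1$. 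A standard diagonal extraction then produces $k_j\to\infty$ such that $f_j:=\chi_{k_j}f-u_{k_j,j}\in A^2(V,\varphi_j)$ satisfies $\limsup_j\|f_j\|_{L^2(V,\varphi_j)}\le\|f\|_{L^2(U,\varphi)}$. The unweighted convergence $\|f_j-f\|_{L^2(V)}\to 0$ is then automatic: $(\chi_{k_j}-1)f\to 0$ in $L^2(V)$ by dominated convergence (since $\chi_{k_j}(z)\to 1$ pointwise a.e.\ on $V$, as $-\Psi(z)<\infty$ there), while $\|u_{k_j,j}\|_{L^2(V)}\le\|u_{k_j,j}\|_{L^2(V,\varphi_j)}\to 0$ because $\varphi_j\le 0$ forces $e^{-\varphi_j}\ge 1$.

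The hard part will be the initial patching step: producing the global psh $\Psi$ on $U$ with $\Psi^{-1}(-\infty)=E$ is non-trivial and rests on the fact that closed locally complete pluripolar sets in pseudoconvex domains are globally complete pluripolar. After that, the argument is a technical but routine combination of Berndtsson's weighted estimate, condition~$(\ast)$, and a diagonal extraction.
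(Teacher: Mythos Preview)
Your proposal is correct and follows essentially the same strategy as the paper: a cut-off near $E$, a Donnelly--Fefferman/Berndtsson $\bar\partial$-estimate with the twist $\psi=-\log(-\Psi)$, dominated convergence via condition~$(\ast)$, and a diagonal extraction. Two small remarks: first, the paper uses the double-log cut-off $\chi(\log(-\psi)+\log\varepsilon)$, which produces an explicit prefactor $\varepsilon^2$ in the $u$-estimate (with the integral taken over a \emph{growing} set $S_\varepsilon$), whereas your single-log cut-off gives no prefactor but an integral over the \emph{shrinking} annulus $A_k$; both mechanisms drive the correction to zero and the difference is cosmetic here. Second, the step you flag as ``hard'' is in fact handled in the paper by a one-line citation (Demailly, Chapter~3, Lemma~2.2), and only a psh function with pole set $E$ on a neighborhood of $\overline V$ is needed, not on all of $U$---so you need not invoke a Col\c{t}oiu-type globalization.
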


\begin{proof}
 Let $E$ be the complete pluripolar set in condition $(\ast)$. It is known that there is a function
 $
 \varrho\in PSH^{-}(\overline{V})\cap C^\infty(\overline{V}\backslash E)
 $
  such that $\varrho=-\infty$ on $E\cap \overline{V}$ (cf. \cite{DemaillyBook}, Chapter 3, Lemma 2.2). Replacing $\varrho$ by $\varrho-1$, we may assume that $\varrho<-1$ holds on $V$.
Set
$$
\psi=-\log \left(-\varrho\right).
$$
Let $\chi$ be as above. Set
$$
\lambda_\varepsilon=\chi(\log(-\psi)+\log\varepsilon),\ \ \ 0<\varepsilon\ll1.
$$
 Applying Theorem \ref{th:Berndtsson-Donnelly-Fefferman} with $\psi$ and $\varphi$ replaced by $\psi/2$ and $\varphi_j+\psi/2$ respectively, we then obtain a solution $u_{j,\varepsilon}$ of  $\bar{\partial} u= f \bar{\partial}\lambda_\varepsilon$ on $V$ satisfying
\begin{eqnarray*}
\int_V |u_{j,\varepsilon}|^2 e^{-\varphi_j}
 & \le & C_0 \int_V |f|^2 |\bar{\partial}\lambda_{\varepsilon}|^2_{ i\partial\bar{\partial}\psi}e^{-\varphi_j}\\
& \le & C_0  \varepsilon^{2}\int_{S_\varepsilon}|f|^2 e^{-\varphi_j}
\end{eqnarray*}
where $S_\varepsilon:=\overline{V}\cap \{-\psi\le 1/\varepsilon\}$ and $C_0>0$ is a universal constant.

Since $e^{-\varphi_j}$ is bounded by a positive $L^1$ function $\phi_{S_\varepsilon}$ on $S_\varepsilon$ by condition $(\ast)$, and $f\in L^\infty(V)$, so we obtain
\begin{eqnarray*}
\int_{S_\varepsilon}|f|^2 e^{-\varphi_j}
& \rightarrow & \int_{S_\varepsilon}|f|^2 e^{-\varphi}
\end{eqnarray*}
 in view of the dominated convergence theorem. Set
$$
f_{j,\varepsilon}=\lambda_\varepsilon f-u_{j,\varepsilon}.
$$
We then have $f_{j,\varepsilon}\in {\mathcal O}(V)$ such that for every $j\ge j_\varepsilon\gg 1$,
$$
\|f_{j,\varepsilon}\|_{L^2(V,\varphi_j)}\le (1+C_0\varepsilon)\|f\|_{L^2(U,\varphi)},
$$
 and since $\varphi_j$ and $\varphi$ are non-positive,
$$
\|f_{j,\varepsilon}-f\|_{L^2(V)}^2\le 2 \int_{\{-\psi\ge \frac1{2\varepsilon}\}}|f|^2 +  C_0\varepsilon^2 \int_U |f|^2 e^{-\varphi}.
$$
 It suffices to take a subsequence from $\{f_{j,\varepsilon}\}$.
\end{proof}

\begin{proposition}\label{prop:On_Diagonal}
Under the conditions of Theorem \ref{th:weighted_Continue}, we have
 $$
K_j(z)\rightarrow K(z),\ \ \ z\in \Omega.
$$
\end{proposition}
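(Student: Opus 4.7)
The plan is to combine the lower-bound inequality $K(z)\ge |f(z)|^2/\|f\|^2$ (for $f\in A^2(\Omega,\varphi_j)$) with the two preceding lemmas, using Proposition \ref{prop:Upper_Semicontinuity} to handle the upper estimate. Fix $z_0\in\Omega$; we already have $\limsup_j K_j(z_0)\le K(z_0)$, so it remains to show $\liminf_j K_j(z_0)\ge K(z_0)$. If $K(z_0)=0$ this is automatic; otherwise, given $\varepsilon>0$, I would pick an extremal-like $f\in A^2(\Omega,\varphi)$ with $\|f\|_{L^2(\Omega,\varphi)}=1$ and $|f(z_0)|^2>K(z_0)-\varepsilon$. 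The task then is to produce competitors $g_j\in A^2(\Omega,\varphi_j)$ whose values at $z_0$ approach $f(z_0)$ and whose weighted norms are essentially controlled by $1$.

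First I would fix a negative continuous psh exhaustion $\rho$ of $\Omega$ (available by hyperconvexity), let $\Omega_\delta=\{\rho<-\delta\}$, and choose $\delta>0$ small enough so that $z_0\in\Omega_{2\delta}$. Applying Lemma \ref{lm:keyLemma} with $U=\Omega$ and $V=\Omega_{2\delta}$ yields $f_j\in A^2(\Omega_{2\delta},\varphi_j)$ with $\limsup_j\|f_j\|_{L^2(\Omega_{2\delta},\varphi_j)}\le 1$ and $\|f_j-f\|_{L^2(\Omega_{2\delta})}\to 0$; the mean value inequality on a small ball around $z_0$ contained in $\Omega_{2\delta}$ then gives $f_j(z_0)\to f(z_0)$. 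Next I would apply Lemma \ref{lm:simple} with $\varepsilon=2\delta$, $S=\{z_0\}$, and the weight $\varphi_j$ to promote each $f_j$ to a function $g_j\in A^2(\Omega,\varphi_j)$ with $g_j(z_0)=f_j(z_0)$ and
$$
\|g_j\|_{L^2(\Omega,\varphi_j)}\le (1+C_{z_0}/|\log(2\delta)|)\,\|f_j\|_{L^2(\Omega_{2\delta},\varphi_j)}.
$$
From the extremal inequality $K_j(z_0)\ge |g_j(z_0)|^2/\|g_j\|_{L^2(\Omega,\varphi_j)}^2$ and the bounds above,
$$
\liminf_{j\to\infty}K_j(z_0)\ge \frac{|f(z_0)|^2}{(1+C_{z_0}/|\log(2\delta)|)^2};
$$
letting first $\delta\to 0^+$ and then $\varepsilon\to 0^+$ gives $\liminf_j K_j(z_0)\ge K(z_0)$, completing the proof.

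The main delicate point, and where I would concentrate the verification, is that Lemma \ref{lm:simple} must be applied with the varying weight $\varphi_j$ while keeping the constant $C_{z_0}$ independent of $j$. Inspecting the proof of that lemma, the Donnelly-Fefferman weight $-\tfrac12\log(-\rho)$ and the pole term $2n\log|z-w|$ used to enforce interpolation at $w=z_0$ do not involve $\varphi$ at all; the only role played by the weight is via $e^{-\varphi_j}\ge 1$ on the right-hand side (since $\varphi_j\le 0$), which is uniform in $j$. This uniformity is what makes the iteration work, and once it is checked everything else is a routine chase through the two lemmas.
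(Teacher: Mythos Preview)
Your proof is correct and follows essentially the same route as the paper: combine Proposition~\ref{prop:Upper_Semicontinuity} for the upper bound with Lemma~\ref{lm:keyLemma} (to approximate $f$ on a sublevel set $\Omega_\varepsilon$ in the $\varphi_j$-norm) and Lemma~\ref{lm:simple} (to extend back to $\Omega$ with controlled loss), then let the sublevel parameter tend to~$0$. The only cosmetic difference is that the paper takes $f=K(\cdot,w)/\sqrt{K(w)}$ directly rather than a near-extremal function; your observation that the constant in Lemma~\ref{lm:simple} is independent of the weight (hence of~$j$) is exactly the point, though the reason is simply that $e^{-\varphi}$ appears symmetrically on both sides of the Donnelly--Fefferman estimate and the remaining constants are purely geometric, not that $e^{-\varphi_j}\ge 1$.
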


\begin{proof}
 Let $S$ be a compact set in $\Omega$ and $w\in S$ be arbitrarily fixed. Set $f(z)=K(z,w)$ and $\Omega_\varepsilon=\{\rho<-\varepsilon\}$ for $\varepsilon>0$, where $\rho$ is a negative continuous psh exhaustion function of $\Omega$. By virtue of Lemma \ref{lm:keyLemma}, there exists $f_{j,\varepsilon}\in A^2(\Omega_\varepsilon,\varphi_j)$ such that
  $$
 {\lim\sup}_{j\rightarrow \infty} \|f_{j,\varepsilon}\|_{L^2(\Omega_\varepsilon,\varphi_j)}\le \|f\|_{L^2(\Omega,\varphi)}=\sqrt{K(w)}
 $$
  and $f_{j,\varepsilon}(w)\rightarrow f(w)$ as $j\rightarrow \infty$. On the other hand, Lemma \ref{lm:simple} yields a function $g_{j,\varepsilon}\in A^2(\Omega,\varphi_j)$ with $g_{j,\varepsilon}(w)=f_{j,\varepsilon}(w)$ and
  $$
\|g_{j,\varepsilon}\|_{L^2(\Omega,\varphi_j)}\le (1+{\rm const}_S/|\log \varepsilon|)\|f_{j,\varepsilon}\|_{L^2(\Omega_\varepsilon,\varphi_j)}.
$$
It follows that
$$
{\lim\inf}_{j\rightarrow \infty} K_j(w)\ge \frac{|g_{j,\varepsilon}(w)|^2}{\|g_{j,\varepsilon}\|^2_{L^2(\Omega,\varphi_j)}}\ge (1+{\rm const}_S/|\log \varepsilon|)^{-2} K(w).
$$
Since $\varepsilon$ can be arbitrarily small, so we get
$$
{\lim\inf}_{j\rightarrow \infty} K_j(w)\ge  K(w).
$$
Combining with Proposition \ref{prop:Upper_Semicontinuity}, we conclude the proof.
\end{proof}

\begin{proof}[Proof of Theorem \ref{th:weighted_Continue}]
 Set $\varphi_{j,k}=\max\{\varphi_j,-k\}$ and $\varphi_{0,k}=\max\{\varphi,-k\}$. Let
$
K_{j,k}(z,w)
$
denote the Bergman kernel with weight $\varphi_{j,k}$ on $\Omega$.
Since $\varphi_{j,k}\ge \varphi_j$, so 
$$
K_j(\cdot,w)\in L^2(\Omega,\varphi_j)\subset L^2(\Omega,\varphi_{j,k}),
$$
and we have
  \begin{eqnarray*}
   && \int_{\Omega}|K_j(\cdot,w)-K_{j,k}(\cdot,w)|^2 e^{-\varphi_{j,k}}\\
    & = &  \int_{\Omega}|K_j(\cdot,w)|^2e^{-\varphi_{j,k}}+\int_{\Omega}|K_{j,k}(\cdot,w)|^2e^{-\varphi_{j,k}}-2K_j(w)\\
   & \le & K_{j,k}(w)-K_j(w).
  \end{eqnarray*}
  
   Set $\Omega_\varepsilon=\{\rho<-\varepsilon\}$ for $\varepsilon\ll1$, where $\rho$ is a negative continuous psh exhaustion function on $\Omega$. We then have
  \begin{eqnarray*}
   && \| K_j(\cdot,w)-K(\cdot,w)\|_{L^2(\Omega_\varepsilon)}\\
    & \le &   \| K_j(\cdot,w)-K_{j,k}(\cdot,w)\|_{L^2(\Omega_\varepsilon)}+ \| K_{j,k}(\cdot,w)-K_{0,k}(\cdot,w)\|_{L^2(\Omega_\varepsilon)}\\
                                                    && +  \|K_{0,k}(\cdot,w)-K(\cdot,w)\|_{L^2(\Omega_\varepsilon)} \\
        & \le &         \| K_j(\cdot,w)-K_{j,k}(\cdot,w)\|_{L^2(\Omega,\varphi_{j,k})}+ \| K_{j,k}(\cdot,w)-K_{0,k}(\cdot,w)\|_{L^2(\Omega_\varepsilon)}\\
                                                    && +  \|K_{0,k}(\cdot,w)-K(\cdot,w)\|_{L^2(\Omega,\varphi_{0,k})} \\
      & \le &  (K_{j,k}(w)-K_j(w))^{1/2}+(K_{0,k}(w)-K(w))^{1/2}\\
         &&  +   \| K_{j,k}(\cdot,w)-K_{0,k}(\cdot,w)\|_{L^2(\Omega_\varepsilon)}.
  \end{eqnarray*}
 Let $K^\varepsilon_{0,k}$ denote the Bergman kernel with weight $\varphi_{0,k}$ on $\Omega_\varepsilon$. Notice that
  \begin{eqnarray*}
   && \| K_{j,k}(\cdot,w)-K_{0,k}^\varepsilon(\cdot,w)\|_{L^2(\Omega_\varepsilon)}^2\\
    & \le & \| K_{j,k}(\cdot,w)-K_{0,k}^\varepsilon(\cdot,w)\|_{L^2(\Omega_\varepsilon,\varphi_{0,k})}^2\\
     & = &  \int_{\Omega_\varepsilon}|K_{j,k}(\cdot,w)|^2e^{-\varphi_{0,k}}+\int_{\Omega_\varepsilon}|K_{0,k}^\varepsilon(\cdot,w)|^2e^{-\varphi_{0,k}}-2K_{j,k}(w)\\
   & = & \int_{\Omega_\varepsilon}|K_{j,k}(\cdot,w)|^2e^{-\varphi_{0,k}}+K_{0,k}^\varepsilon(w)-2K_{j,k}(w),
  \end{eqnarray*}
  and 
  $$
  \| K_{0,k}(\cdot,w)-K_{0,k}^\varepsilon(\cdot,w)\|_{L^2(\Omega_\varepsilon)}^2\le K_{0,k}^\varepsilon(w)-K_{0,k}(w).
  $$
  Since 
  $$
  |K_{j,k}(z,w)|^2\le K_{j,k}(z)K_{j,k}(w)\le K_\Omega(z)K_\Omega(w)
  $$
   where $K_\Omega$ is the (standard) Bergman kernel of $\Omega$, it follows from the dominated convergence theorem that
  $$
  \int_{\Omega_\varepsilon}|K_{j,k}(\cdot,w)|^2(e^{-\varphi_{0,k}}-e^{-\varphi_{j,k}})\rightarrow 0
  $$
  as $j\rightarrow \infty$. Thus for  every $0<\tau\ll 1$,
  $$
  \int_{\Omega_\varepsilon}|K_{j,k}(\cdot,w)|^2e^{-\varphi_{0,k}}\le \int_{\Omega}|K_{j,k}(\cdot,w)|^2 e^{-\varphi_{j,k}}+\tau=K_{j,k}(w)+\tau,
  $$
  provided $j\ge j(k,\varepsilon,\tau)\gg 1$. It follows that
  \begin{eqnarray}\label{eq:S3_4}
   &&  \| K_j(\cdot,w)-K(\cdot,w)\|_{L^2(\Omega_\varepsilon)}\nonumber\\
   & \le &   (K_{j,k}(w)-K_j(w))^{1/2}+(K_{0,k}(w)-K(w))^{1/2}\nonumber\\
   && +  (K_{0,k}^\varepsilon(w)-K_{j,k}(w)+\tau)^{1/2}+(K_{0,k}^\varepsilon(w)-K_{0,k}(w))^{1/2}.
  \end{eqnarray}
  By virtue of Proposition \ref{prop:On_Diagonal}, we have
  $$
  \lim_{j\rightarrow \infty} K_j(w)=K(w)\ \ \ {\rm and\ \ \ }
    \lim_{j\rightarrow \infty} K_{j,k}(w)=K_{0,k}(w).
  $$
   On the other hand, it is easy to verify that
  $$
  \lim_{\varepsilon\rightarrow 0} K^\varepsilon_{0,k}(w)=K_{0,k}(w) \ \ \ {\rm and\ \ \ } \lim_{k\rightarrow \infty} K_{0,k}(w)=K(w).
  $$
  Thus we get
   $$
  \lim_{j\rightarrow \infty} K_j(z,w)=K(z,w)
 $$
 in view of  (\ref{eq:S3_4}) and the mean value inequality.
 \end{proof}

\begin{remark}
By Cauchy's integrals, we may show  that
$$
\frac{\partial^{|\mu|+|\nu|} K_j(z,w)}{\partial z^\mu \partial \bar{w}^\nu} \rightarrow \frac{\partial^{|\mu|+|\nu|} K(z,w)}{\partial z^\mu \partial \bar{w}^\nu}
$$
for all multi-indices $\mu$ and $\nu$.
\end{remark}

  \section{Applications to singularity theory of psh functions}

  The following result improves a key semi-continuity result for complex singularity exponents (cf. \cite{DemaillyKollar}, Lemma 3.2; see also \cite{Varchenko}, \cite{PhongSturm}).

  \begin{proposition}\label{prop:semi-continuity}
   Let $U$ be a bounded pseudoconvex domain in ${\mathbb C}^n$. Suppose that $\{\varphi_j\}\subset PSH^-(U)$ satisfies condition $(\ast)$ and $\varphi_j$ converges almost everywhere on $U$ to a function $\varphi\in PSH^-(U)$ such that $e^{-\varphi}\in L^1(U)$. For every $V\subset \subset U$, there exists $j_0\in {\mathbb Z}^+$ such that
 $$
\int_V e^{-\varphi_j}\le {\rm const.}
 $$
 for all $j\ge j_0$.
  \end{proposition}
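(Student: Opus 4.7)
\emph{Proof plan.} The plan is to apply the key approximation result (Lemma \ref{lm:keyLemma}) to the constant function $f\equiv 1$ and then extract uniform control of $\int_V e^{-\varphi_j}$ from the interior regularity of holomorphic functions.

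First I would fix an intermediate pseudoconvex $W$ with $V\subset\subset W\subset\subset U$ (any smooth sublevel set of a continuous psh exhaustion of $U$ containing $\bar V$ does the job). Since $e^{-\varphi}\in L^1(U)$, the function $f\equiv 1$ lies in $A^2(U,\varphi)$. Apply Lemma \ref{lm:keyLemma} on the pair $W\subset\subset U$ to obtain holomorphic functions $f_j\in A^2(W,\varphi_j)$ with
$$
\limsup_{j\to\infty}\|f_j\|_{L^2(W,\varphi_j)}\le \|1\|_{L^2(U,\varphi)}=\left(\int_U e^{-\varphi}\right)^{1/2}
$$
and $\|f_j-1\|_{L^2(W)}\to 0$.

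Next I would upgrade the $L^2(W)$ convergence of $f_j\to 1$ to \emph{uniform} convergence on $\bar V$: since $f_j-1$ is holomorphic on $W$ and $\mathrm{dist}(V,\partial W)>0$, the mean value inequality applied to $|f_j-1|^2$ on balls of a fixed radius centered at points of $V$ gives a sup-norm bound by a constant multiple of $\|f_j-1\|_{L^2(W)}$. In particular, there exists $j_0$ such that $|f_j|\ge 1/2$ on $V$ for all $j\ge j_0$.

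Combining these two facts, for every $j\ge j_0$,
$$
\int_V e^{-\varphi_j}\le 4\int_V |f_j|^2 e^{-\varphi_j}\le 4\,\|f_j\|_{L^2(W,\varphi_j)}^2\le C,
$$
where $C$ is any constant strictly larger than $4\int_U e^{-\varphi}$ (possibly enlarging $j_0$). This is the desired uniform bound. The only substantive point is the first step—namely, producing the $f_j$ via Lemma \ref{lm:keyLemma}—and this is already done; the remainder is just the standard interior estimate for holomorphic functions. No obstruction is anticipated, as condition $(\ast)$ and the integrability $e^{-\varphi}\in L^1(U)$ feed directly into the hypotheses of the key lemma.
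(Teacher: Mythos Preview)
Your proposal is correct and follows essentially the same route as the paper: choose an intermediate pseudoconvex $W$, apply Lemma~\ref{lm:keyLemma} with $f\equiv 1$, use interior estimates to get $|f_j|\ge 1/2$ on $V$, and conclude. The paper's proof is terser (it does not spell out the mean-value step), but the argument is identical.
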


  \begin{proof}
  Choose a pseudoconvex domain $W$ satisfying $V\subset\subset W\subset\subset U$.  Applying Lemma \ref{lm:keyLemma} with $f=1$, we get a function $f_j\in {\mathcal O}(W)$ such that
  $$
  \int_W |f_j|^2 e^{-\varphi_j}\le {\rm const.}
  $$
  for $j\gg 1$, and $\|f_j-1\|_{L^2(W)}\rightarrow 0$. It follows that $|f_j|\ge 1/2$ on $V$ when $j\gg 1$, so that
  $$
  \int_V  e^{-\varphi_j}\le {\rm const.}
  $$
  \end{proof}

  Combining Proposition \ref{prop:semi-continuity} with Example (2) in \S\,1, we immediately obtain the following result due to Berndtsson \cite{BerndtssonOpenness} (originally conjectured by Demailly-Koll\'ar in \cite{DemaillyKollar}):

  \begin{corollary}[Openness Theorem]
  Let $U$ be a bounded pseudoconvex domain and $\varphi\in PSH^{-1}(U)$ with $\int_U e^{-\varphi}<\infty$. Let $V$ be a relatively compact domain in $U$. Then there exists $p>1$ such that $
  \int_{V} e^{-p\varphi}<\infty$.
  \end{corollary}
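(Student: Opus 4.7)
My approach is to obtain the Openness Theorem as an immediate application of Proposition \ref{prop:semi-continuity}, applied to the family of rescalings $\varphi_j := t_j \varphi$ for a suitable sequence of parameters $t_j \to 1$ with $t_j > 1$.

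The plan is as follows. First, I fix an auxiliary constant $c > 1$ (for concreteness, any $c \in (1,2)$ will do) and pick a sequence $t_j \in (1, c)$ with $t_j \to 1$. Setting $\varphi_j := t_j \varphi$, these are negative psh functions on $U$ converging pointwise (hence a.e.) to $\varphi$. Since $e^{-\varphi} \in L^1(U)$ by hypothesis, every assumption of Proposition \ref{prop:semi-continuity} will be in place once I verify that $\{\varphi_j\}$ satisfies condition $(\ast)$.

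To check condition $(\ast)$, I invoke precisely the construction of Example (2) in \S\,1 with $\psi = \varphi$. By Bombieri's theorem the set
$$
E := \{z \in U : e^{-c\varphi}\ \text{is not}\ L^1\ \text{in any neighborhood of}\ z\}
$$
is a closed complete pluripolar (in fact analytic) subset of $U$, and $e^{-c\varphi}$ is locally $L^1$ on $U \setminus E$. Because $\varphi \le 0$ and $t_j < c$, one has $e^{-\varphi_j} = e^{-t_j \varphi} \le e^{-c\varphi}$ pointwise, so for every compact $S \subset U \setminus E$ the function $\phi_S := e^{-c\varphi}|_S \in L^1(S)$ provides a uniform majorant. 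Condition $(\ast)$ follows.

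With condition $(\ast)$ in hand, Proposition \ref{prop:semi-continuity} applied to the given relatively compact subdomain $V \subset\subset U$ yields some $j_0$ such that $\int_V e^{-t_{j_0} \varphi} = \int_V e^{-\varphi_{j_0}} \le {\rm const.}$, whence $p := t_{j_0} > 1$ is the desired exponent. I do not expect a genuine obstacle here, because the substantive analysis (the $L^2$-approximation built into Lemma \ref{lm:keyLemma}) is already absorbed in Proposition \ref{prop:semi-continuity}; the only point to watch is to choose $c$ strictly above $1$, so that the sequence $t_j$ can lie in $(1,c)$ while still tending to $1$ from above.
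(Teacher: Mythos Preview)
Your proposal is correct and follows exactly the route the paper takes: the paper simply says ``Combining Proposition \ref{prop:semi-continuity} with Example (2) in \S\,1, we immediately obtain'' the corollary, and you have spelled out precisely that combination by taking $\varphi_j = t_j\varphi$ with $t_j\searrow 1$, $t_j<c$, and using Bombieri's theorem to build the pluripolar set $E$ and the local majorant $e^{-c\varphi}$.
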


  \begin{remark}
  After Berndtsson's work \cite{BerndtssonOpenness}, Guan-Zhou \cite{GuanZhou} proved a\/ {\it strong} openness theorem that $\int_U |f|^2 e^{-\varphi}<\infty$ for a fixed holomorphic function $f$ implies $\int_V |f|^2 e^{-p\varphi}<\infty$ for some $p>1$.  It is unclear whether the method developed here still applies to this more general case.  We refer to \cite{FarveJonsson}, \cite{Heip}, \cite{JpnssonMustata} and \cite{Lempert} for related works on openness theorems.
  \end{remark}

 An equivalent statement of the openness theorem is that if $\varphi$ is a psh function in a neighborhood $U$ of\/ $0$ such that $c_0(\varphi)<\infty$, then $e^{-c_0(\varphi)\varphi}$ is not $L^1$ in any neighborhood of\/ $0$. Actually, we have the following more general conclusion:

 \begin{proposition}\label{prop:Openness}
  Let $\varphi$ be a psh function in a neighborhood $U$ of\/ $0$ such that $c_0(\varphi)<\infty$. Then $e^{-c_0(\varphi)\varphi}/|\varphi|^r$ is not $L^1$ in any neighborhood of\/ $0$ for every $0\le r<1$.
 \end{proposition}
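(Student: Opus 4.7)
My plan is to prove Proposition~\ref{prop:Openness} by contradiction, adapting the strategy used for the openness theorem (Corollary~\ref{coro:exhaustion}), which is the special case $r=0$. Suppose for contradiction that $I := \int_V e^{-c\varphi}/|\varphi|^r\, dV < \infty$ on some neighborhood $V$ of\/ $0$, where $c = c_0(\varphi)$ and $0 \le r < 1$. The goal is to produce $p > c$ with $\int_{V'} e^{-p\varphi} < \infty$ on a smaller neighborhood, contradicting the definition of $c_0(\varphi)$.

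Following Example~(2), I would introduce the sequence $\varphi_j = (c + 1/j)\varphi \in PSH^-(V)$, which converges a.e.\ to $c\varphi$ and satisfies condition~$(\ast)$ (taking any fixed $c' > c$ as the Example's constant, with $E$ the analytic set on which $e^{-c'\varphi}$ fails to be locally $L^1$). One cannot apply Proposition~\ref{prop:semi-continuity} directly with limit $c\varphi$, since $e^{-c\varphi}$ is not $L^1$ by hypothesis. The crux is therefore to develop a weighted analog of Lemma~\ref{lm:keyLemma}: for $f \in \mathcal{O}(V)$ satisfying $\int_V |f|^2\, e^{-c\varphi}/|\varphi|^r < \infty$, construct $f_j \in \mathcal{O}(V')$ with
\[
\limsup_j \int_{V'} |f_j|^2\, \frac{e^{-\varphi_j}}{|\varphi_j|^{r'}} \le C_0 \int_V |f|^2\, \frac{e^{-c\varphi}}{|\varphi|^r}
\]
for some $r' < r$ (with a definite gap depending only on $r$) and $f_j \to f$ in $L^2_{\text{loc}}$.

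The proof mimics Lemma~\ref{lm:keyLemma}: cut off via a plateau function $\lambda_\epsilon$ and solve $\bar\partial u = f\bar\partial\lambda_\epsilon$, but with Theorem~\ref{th:Berndtsson97} applied using positive weight $\Psi = 1/|\varphi|^{r'}$ (after, without loss of generality, replacing $\varphi$ by $\varphi - M$ so that $\varphi \le -1$ on $V$). A direct computation gives $i\partial\bar\partial\Psi = r'(r'+1)(-\varphi)^{-r'-2}i\partial\varphi\wedge\bar\partial\varphi + r'(-\varphi)^{-r'-1}i\partial\bar\partial\varphi$, and the required convexity inequality $i\partial\bar\partial\Psi \le \Psi(i\partial\bar\partial\varphi_j - \omega)$ can be arranged, for a suitable $\omega \ge 0$ built from $i\partial\bar\partial$ of an auxiliary psh exhaustion $\varrho$ (smooth off the pluripolar set $E$, as in the proof of Lemma~\ref{lm:keyLemma}). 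The plurisubharmonicity of $-\log(-\varphi)$ (direct check: $i\partial\bar\partial(-\log(-\varphi)) = i\partial\bar\partial\varphi/(-\varphi) + i\partial\varphi\wedge\bar\partial\varphi/\varphi^2 \ge 0$) is what produces the strict gain $r' < r$.

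Applied with $f = 1$ (valid by hypothesis), this weighted lemma yields $f_j \to 1$ uniformly on compacts (by $L^2$-convergence combined with holomorphicity), so $|f_j| \ge 1/2$ on a small $V_0 \ni 0$ for $j \gg 1$. Hence $\int_{V_0} e^{-\varphi_j}/|\varphi_j|^{r'} \le C$ uniformly, which translates (via $\varphi_j = (c+1/j)\varphi$) to $\int_{V_0} e^{-(c+1/j)\varphi}/|\varphi|^{r'} \le C'$. Iterating with the new parameters $(c_k, r_k)$ (where $r_k$ decreases by at least $\eta>0$ at each step while $c_k$ grows by a summable amount), after finitely many steps one reaches $r_N = 0$ at some $c_N > c$, giving $\int_{V_N} e^{-c_N\varphi} < \infty$ and hence $c_0(\varphi) \ge c_N > c$, the desired contradiction.

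The main obstacle is implementing the strict gain $r' < r$ in the weighted lemma. Verifying the Berndtsson inequality for $\Psi = 1/|\varphi|^{r'}$ requires absorbing the term $r'(r'+1)(-\varphi)^{-r'-2}i\partial\varphi\wedge\bar\partial\varphi$ into the right-hand side, which uses the factor $(1-r)$ provided by $-\log(-\varphi)$ being psh; as $r \to 1^-$ this reserve is exhausted, explaining why the hypothesis $r < 1$ is essential and why the case $r = 1$ is genuinely harder.
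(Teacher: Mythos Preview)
Your overall plan—argue by contradiction and manufacture holomorphic approximants to $1$ via a $\bar\partial$-estimate carrying the extra factor $|\varphi|^{-r}$—is exactly the paper's. The gap is in the implementation through Theorem~\ref{th:Berndtsson97} with $\Psi=(-\varphi)^{-r'}$. As you compute, $i\partial\bar\partial\Psi$ contains the term $r'(r'+1)(-\varphi)^{-r'-2}\,i\partial\varphi\wedge\bar\partial\varphi$, and the hypothesis of that theorem requires this to be dominated by $\Psi\,i\partial\bar\partial\varphi_j-\Psi\,\omega$. But $i\partial\varphi\wedge\bar\partial\varphi$ is in general not controlled by $i\partial\bar\partial\varphi$ (take $\varphi$ pluriharmonic along a direction in which $\partial\varphi\neq 0$), so the inequality fails; building a positive $\omega$ from an auxiliary exhaustion $\varrho$ cannot help, since $\omega$ enters with a minus sign and only tightens the constraint. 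You have isolated the right ingredient—the plurisubharmonicity of $-\log(-\varphi)$, equivalently the self-bounded-gradient inequality $i\partial\bar\partial(-\log(-\varphi))\ge i\partial\log(-\varphi)\wedge\bar\partial\log(-\varphi)$—but it belongs in the twist $\psi$ of Theorem~\ref{th:Berndtsson-Donnelly-Fefferman}, not in the multiplier $\Psi$ of Theorem~\ref{th:Berndtsson97}. The iteration is also unnecessary: once one has $\int_W e^{-t\varphi}/|\varphi|^r<\infty$ for some $t>c_0(\varphi)$, the elementary bound $e^{(t-s)\varphi}|\varphi|^r\le C$ on $\{\varphi<-1\}$ gives $\int_W e^{-s\varphi}<\infty$ for any $c_0(\varphi)<s<t$, already a contradiction.

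The paper proceeds via Theorem~\ref{th:Berndtsson-Donnelly-Fefferman} with the combined twist $\phi_{r,\tau}=-r\log(-\varphi)+\tau\psi$, where $\psi=-\log\bigl(-\log\sum|f_j|^2\bigr)$ is built from local defining functions of the analytic set $E$. For $r<r'<1$ one verifies $r'\,i\partial\bar\partial\phi_{r,\tau}\ge i\partial\phi_{r,\tau}\wedge\bar\partial\phi_{r,\tau}$ provided $\tau\ll(r'/r-1)^2$; this is a condition on $\phi_{r,\tau}$ alone and involves no comparison of $i\partial\varphi\wedge\bar\partial\varphi$ with $i\partial\bar\partial\varphi$. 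Applying (\ref{eq:Berndtsson-DF}) with $(\psi,\varphi)$ replaced by $(\phi_{r,\tau},\,t\varphi+\tau\psi)$ then produces, in a single step, a holomorphic $f_{t,\varepsilon}=\lambda_\varepsilon-u_{t,\varepsilon}$ close to $1$ in $L^2$ with $\int_V|f_{t,\varepsilon}|^2 e^{-t\varphi}/|\varphi|^r$ bounded uniformly for $t$ near $c_0(\varphi)$; taking $t>c_0(\varphi)$ finishes as above.
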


 \begin{proof}
  Fix a number $c>c_0(\varphi)=:t_0$. Set
$$
E:=\left\{z\in U:e^{-c\varphi}\ {\rm is\ not\ }L^1\ {\rm in\ any\ neighborhood\ of\ }z\right\}.
$$
By virtue of Bombieri's theorem (cf. \cite{HormanderBook}, Corollary 4.4.6),  $E$ is an analytic subset in $U$. Clearly, $0\in E$. Shrinking $U$ if necessary, we find $f_1,\cdots,f_m\in {\mathcal O}(U)$ such that $E\cap U=\cap_j f^{-1}_j(0)$ and $\sum_j |f_j|^2<e^{-1}$ on $U$. Furthermore, we may assume that $\varphi<-1$ on $U$. Set
$$
\psi=-\log \left(-\log \sum |f_j|^2 \right)
$$
and
$$
\phi_{r,\tau}=-r\log (-\varphi)+\tau\psi
$$
where $0<r, \tau<1$.
Notice that
$$
i\partial\bar{\partial} \phi_{r,\tau} \ge r i\partial \log (-\varphi)\wedge \bar{\partial} \log(-\varphi)+\tau i\partial \psi\wedge \bar{\partial}\psi
$$
and
$$
\partial \phi_{r,\tau}=-r \partial \log (-\varphi)+\tau \partial \psi.
$$
It follows that
\begin{eqnarray*}
i\partial\phi_{r,\tau}\wedge \bar{\partial} \phi_{r,\tau}  & \le & r^2(1+\sqrt{\tau})i \partial \log (-\varphi)\wedge \bar{\partial} \log (-\varphi)\\
&& + (\tau^{3/2}+\tau^2)i \partial\psi\wedge \bar{\partial} \psi.
\end{eqnarray*}
If $r<r'<1$, then
$$
r' i\partial\bar{\partial} \phi_{r,\tau}\ge i\partial\phi_{r,\tau}\wedge \bar{\partial} \phi_{r,\tau}
$$
provided $\tau\ll (\frac{r'}r-1)^2$.
Let $\chi$ be as above. Set
$$
\lambda_\varepsilon=\chi(\log(-\psi)+\log\varepsilon),\ \ \ 0<\varepsilon\ll1.
$$

 Suppose on the contrary that there exists a (pseudoconvex) neighborhood $V$ of $0$ such that
 $$
 \int_V e^{-c_0(\varphi)\varphi}/|\varphi|^r<\infty.
 $$
   Applying Theorem \ref{th:Berndtsson-Donnelly-Fefferman} with $\psi$ and $\varphi$ replaced by $\phi_{r,\tau}$ and $t\varphi+\tau\psi$ respectively, we then obtain a solution $u_{t,\varepsilon}$ of  $\bar{\partial} u=  \bar{\partial}\lambda_\varepsilon$ on $V$ satisfying
\begin{eqnarray*}
&& \int_V |u_{t,\varepsilon}|^2 e^{-r\log(-\varphi)-t\varphi}\\
 & \le & {\rm const}_{r'} \int_V  |\bar{\partial}\lambda_{\varepsilon}|^2_{\tau i\partial\bar{\partial}\psi}e^{-r\log(-\varphi)-t\varphi}\\
& \le & {\rm const}_{r'} \varepsilon^{2}\int_{1/(2{\varepsilon})\le -\psi\le 1/\varepsilon} e^{-r\log(-\varphi)-t\varphi}.
\end{eqnarray*}
Since $e^{-c\varphi}$ is $L^1$ over $V\cap \{-\psi\le 1/\varepsilon\}$, so we obtain
\begin{eqnarray*}
\int_{V\cap \{ -\psi\le 1/\varepsilon\}}e^{-r\log(-\varphi)-t\varphi}
& \rightarrow & \int_{V\cap \{ -\psi\le 1/\varepsilon\} } e^{-r\log(-\varphi)-t_0\varphi}
\end{eqnarray*}
as $t\rightarrow t_0$, in view of the dominated convergence theorem. The function $f_{t,\varepsilon}:=\lambda_\varepsilon-u_{t,\varepsilon}$ is holomorphic in $V$ and satisfies
$$
\int_V |f_{t,\varepsilon}|^2 e^{-t \varphi}/|\varphi|^r\le {\rm const}_{r'} \int_V e^{-t_0\varphi}/|\varphi|^r
$$
and $\|f_{t,\varepsilon}-1\|_{L^2(V)}\rightarrow 0$ as $t\rightarrow t_0$ and $\varepsilon\rightarrow 0$. It follows that for certain smaller neighborhood $W$ of $0$ we have $|f_{t,\varepsilon}|\ge 1/2$ provided $\varepsilon\ll1$ and $|t-t_0|\ll 1$, so that $\int_W e^{-t\varphi}<\infty$ for some $t>t_0$, contradicts with the definition of $t_0=c_0(\varphi)$.
 \end{proof}

\begin{remark}
 Proposition \ref{prop:Openness} does not hold for $r>1$. An elementary example is given by $\varphi(z)=\log |z|$.  Yet it is still possible that the case $r=1$ is true. On the other hand, the example $\varphi(z)=\log |z|-(-\log|z|)^{1/2}$, where $|z|<1$, shows that there does not exist in general a number $r>1$ such that $e^{-c_0(\varphi)\varphi}/|\varphi|^r$ is $L^1$ in some neighborhood of $0$.
\end{remark}

   Similar ideas also yield an openness theorem for $S^1-$invariant psh functions near infinity. Let ${\mathcal F}$ denote the set of positive continuous psh functions $\varphi$ on ${\mathbb C}^n$ satisfying $\varphi(z)\rightarrow \infty$ as $|z|\rightarrow \infty$. For every $\varphi\in {\mathcal F}$, we define the log canonical threshold $c_\infty(\varphi)$ of $\varphi$ at\/ $\infty$ as
   $$
   c_\infty(\varphi):=\inf\{t> 0: e^{-t\varphi}\ {\rm is\ } L^1\ {\rm in \ } {\mathbb C}^n\}.
   $$ For every $t\in {\mathbb R}^+$, we denote by $K_t$ the Bergman kernel with weight $t\varphi$ on ${\mathbb C}^n$. Set
   $$
   c'_\infty(\varphi):=\inf\{t> 0: K_t(0)\neq 0\}
   $$
   and
   $$
    c''_\infty(\varphi):=\inf\{t> 0: K_t\ {\rm is\ not\ identically\ } 0\}.
   $$
 Clearly, we have $c_\infty(\varphi)\ge c'_\infty(\varphi)\ge  c''_\infty(\varphi)$. On the other hand, the following elementary fact holds.

 \begin{lemma}\label{lm:circular}
  If $\varphi$ is $S^1-$invariant, i.e., $\varphi(e^{i\theta}z)=\varphi(z)$ for every $\theta\in {\mathbb R}$, then $c_\infty(\varphi)=c'_\infty(\varphi)$.
 \end{lemma}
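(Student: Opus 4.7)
The plan is to establish the non-trivial direction $c_\infty(\varphi)\le c'_\infty(\varphi)$; combined with the observed chain $c_\infty\ge c'_\infty\ge c''_\infty$, this gives equality of the first two. Fix any $t>c'_\infty(\varphi)$. By the definition of $c'_\infty$ and the extremal description of the diagonal Bergman kernel, $K_t(0)\neq 0$ yields an entire function $f\in A^2(\mathbb{C}^n,t\varphi)$ with $f(0)\neq 0$. The aim is to extract from $f$ the bound $\int_{\mathbb{C}^n} e^{-t\varphi}<\infty$, which forces $t\ge c_\infty(\varphi)$.

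The tool for the extraction is the $S^1$-average
$$
f_0(z):=\frac{1}{2\pi}\int_0^{2\pi} f(e^{i\theta}z)\,d\theta.
$$
Expanding $f$ into its homogeneous components at $0$, one sees $f_0(z)\equiv f(0)$, a nonzero constant. Then Jensen's inequality applied to the probability measure $d\theta/(2\pi)$, Fubini, and the change of variables $w=e^{i\theta}z$ (which preserves both Lebesgue measure and, by the $S^1$-invariance of $\varphi$, the weight $e^{-t\varphi}$) give
$$
|f(0)|^2\int_{\mathbb{C}^n} e^{-t\varphi}\,dV \;\le\; \frac{1}{2\pi}\int_0^{2\pi}\!\!\int_{\mathbb{C}^n} |f(e^{i\theta}z)|^2 e^{-t\varphi(z)}\,dV(z)\,d\theta \;=\; \|f\|^2_{L^2(\mathbb{C}^n,t\varphi)}.
$$
Since $f(0)\neq 0$ and the right-hand side is finite, $e^{-t\varphi}\in L^1(\mathbb{C}^n)$, so $t\ge c_\infty(\varphi)$. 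Letting $t\downarrow c'_\infty(\varphi)$ yields $c_\infty(\varphi)\le c'_\infty(\varphi)$, as required.

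I do not expect a genuine obstacle here: the whole proof is a rotation-averaging trick, and each step (Jensen, Fubini, and the rotation change of variable) is justified immediately by the finiteness of $\|f\|_{L^2(\mathbb{C}^n,t\varphi)}$ and the hypothesis that both $\varphi$ and $dV$ are $S^1$-invariant. The mildly delicate point is that only the single $S^1$-action $z\mapsto e^{i\theta}z$ is used—no full polyradial decomposition is required—so the argument lines up exactly with the hypothesis of the lemma and does not silently demand more symmetry than $\varphi$ has.
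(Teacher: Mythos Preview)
Your proof is correct and follows essentially the same idea as the paper's: both exploit the $S^1$-invariance to show that the constant term of a nonzero element of $A^2(\mathbb{C}^n,t\varphi)$ is itself square-integrable against $e^{-t\varphi}$, forcing $e^{-t\varphi}\in L^1(\mathbb{C}^n)$. The only cosmetic difference is that the paper works with $f=K_t(\cdot,0)$ and invokes the orthogonality of nonconstant monomials to $1$ in $L^2(\mathbb{C}^n,t\varphi)$ (a Bessel-type inequality), whereas you take a general $f$ with $f(0)\neq 0$ and use circle-averaging plus Jensen; these are equivalent formulations of the same rotation trick.
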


 \begin{proof}
  Suppose $K_t(0)\neq 0$. Since $K_t(z,0)$ is an entire function on ${\mathbb C}^n$, so we have
  $$
  K_t(z,0)=\sum c_{\alpha_1\cdots\alpha_n} z_1^{\alpha_1}\cdots z_n^{\alpha_n}.
  $$
  Notice that $ z_1^{\alpha_1}\cdots z_n^{\alpha_n}\bot\,1$ in $L^2({\mathbb C}^n,t\varphi)$ whenever $\sum \alpha_j>0$, for $\varphi$ is $S^1-$invariant. It follows that
  $$
  K_t(0)=\int_{{\mathbb C}^n} |K_t(\cdot,0)|^2 e^{-t\varphi}\ge |c_0|^2 \int_{{\mathbb C}^n}  e^{-t\varphi}=K_t(0)^2 \int_{{\mathbb C}^n}  e^{-t\varphi}.
  $$
  Thus we have $e^{-t\varphi}\in L^1({\mathbb C}^n)$, so that  $c_\infty(\varphi)\le c'_\infty(\varphi)$.
 \end{proof}

\begin{proposition}\label{prop:OpennessInfinity}
 For every $\varphi\in {\mathcal F}$, we have
\begin{enumerate}
 \item $K_{c'_\infty(\varphi)\varphi}(0)=0$ and $K_{c''_\infty(\varphi)\varphi}\equiv 0$.
 \item If $\varphi$ is $S^1-$invariant, then $e^{-c_\infty(\varphi)\varphi}$ is not $L^1$ in ${\mathbb C}^n$.
 \end{enumerate}
\end{proposition}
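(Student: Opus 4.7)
The plan is to imitate the proof of Proposition~\ref{prop:Openness}, adapted so that the ``singular'' locus is the point at infinity in $\mathbb{C}^n$ rather than a pluripolar set inside a bounded domain. Once (1) is in hand, (2) will follow by a short aside using Lemma~\ref{lm:circular}.

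For part~(1), first assertion: set $t_0:=c'_\infty(\varphi)$ and suppose, toward a contradiction, that $K_{t_0}(0)>0$, so there exists $f_0\in A^2(\mathbb{C}^n,t_0\varphi)$ with $f_0(0)=1$. The aim is to produce, for some $t<t_0$, a function $g\in A^2(\mathbb{C}^n,t\varphi)$ with $g(0)\neq 0$, contradicting the definition of $c'_\infty$. Since $\varphi\to\infty$ at infinity, the sublevel sets $\Omega_R:=\{\varphi<R\}$ are relatively compact and exhaust $\mathbb{C}^n$. In analogy with the cutoff in Lemma~\ref{lm:keyLemma}, I would build a smooth $\lambda_R$ with $\lambda_R=1$ on $\Omega_{R/2}$ and $\lambda_R=0$ off $\Omega_R$, using a logarithmic profile in $\varphi$ so that $|\bar{\partial}\lambda_R|^2_{i\partial\bar{\partial}\varphi}$ is bounded by a small function of $R$. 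Then solve $\bar{\partial}u=f_0\bar{\partial}\lambda_R$ by applying Theorem~\ref{th:Berndtsson-Donnelly-Fefferman} with an auxiliary weight $\psi_R$ built from $\varphi$ (in the spirit of the $-r\log(-\varphi)+\tau\psi$ device in Proposition~\ref{prop:Openness}), chosen so that the resulting weighted estimate reads $\int_{\mathbb{C}^n}|u|^2 e^{-t\varphi}\leq \mathrm{const}\cdot\int_{\{\varphi>R/2\}}|f_0|^2 e^{-t_0\varphi}$ for some $t<t_0$, with the right-hand side tending to $0$ as $R\to\infty$. Since $u$ is holomorphic on $\Omega_{R/2}$ (where $\bar{\partial}\lambda_R=0$), the mean value inequality on a fixed small ball around $0$ forces $|u(0)|<1$ once $R$ is large enough. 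The function $g:=\lambda_R f_0-u$ is then holomorphic on $\mathbb{C}^n$, belongs to $A^2(\mathbb{C}^n,t\varphi)$, and satisfies $g(0)=1-u(0)\neq 0$, giving the desired contradiction. The second assertion $K_{c''_\infty(\varphi)\varphi}\equiv 0$ is obtained by the analogous construction centered at any point $w_0$ at which $K_{c''_\infty}$ is assumed nonzero.

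Part~(2) is then immediate from part~(1): by Lemma~\ref{lm:circular}, $c_\infty(\varphi)=c'_\infty(\varphi)$ for $S^1$-invariant $\varphi$. Were $e^{-c_\infty\varphi}\in L^1(\mathbb{C}^n)$, the constant function $1$ would lie in $A^2(\mathbb{C}^n,c_\infty\varphi)$, giving $K_{c_\infty}(0)\geq 1/\int_{\mathbb{C}^n}e^{-c_\infty\varphi}>0$, contradicting part~(1).

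The main obstacle is the careful choice of the auxiliary weight $\psi_R$ on the unbounded $\mathbb{C}^n$. In Proposition~\ref{prop:Openness}, $\psi$ is manufactured from holomorphic functions defining the singular set $E$; here the analogue must be engineered from $\varphi$ itself (for example from $\log\varphi$ on regions where $\varphi\gg 1$) in a way that simultaneously preserves the Berndtsson--Donnelly--Fefferman curvature inequality $r\,i\partial\bar{\partial}(t\varphi+\psi_R)\geq i\partial\psi_R\wedge\bar{\partial}\psi_R$, collapses the effective weight on the left-hand side of the $L^2$-estimate to $e^{-t\varphi}$ for some $t<t_0$, and makes the right-hand side vanish as $R\to\infty$. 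Because $\varphi$ is only assumed continuous and not $C^2$, the distributional refinement of Theorem~\ref{th:Berndtsson-Donnelly-Fefferman} noted in the remark following~\eqref{eq:L2Minimal} will also be needed.
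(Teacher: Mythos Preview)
Your approach is correct and is essentially the paper's: the paper deduces (1) from the continuity of $t\mapsto K_t(z)$ on $\mathbb{R}^+$ (Proposition~\ref{prop:Continue_Infinity}), and that continuity is established by exactly the cutoff-plus-Berndtsson--Donnelly--Fefferman construction you outline, with the auxiliary weight taken explicitly as $\psi=-\log(2/t_0+\varphi)$ (this resolves the ``main obstacle'' you flag). Part~(2) is argued identically in both.
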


\begin{proof}
 (1) follows directly from the following proposition. (2) follows from (1) and Lemma \ref{lm:circular}.
\end{proof}

   \begin{proposition}\label{prop:Continue_Infinity}
  If $\varphi\in {\mathcal F}$, then $K_t(z)$ is continuous in $t$ over ${\mathbb R}^+$.
 \end{proposition}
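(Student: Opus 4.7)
The plan is to reduce to the bounded-domain continuity of Corollary \ref{coro:weighted_Continue} via exhaustion by balls $B_R=\{|z|<R\}$, and to handle the unbounded-domain subtleties with a cutoff-and-correction argument. For each $R$ with $w\in B_R$, let $K_{t,R}$ denote the Bergman kernel of $B_R$ with weight $t\varphi$. Since $\varphi$ is bounded on $\overline{B_R}$, subtracting the continuous shift $t\sup_{\overline{B_R}}\varphi$ makes the weight a negative continuous psh function whose exponential is jointly continuous in $(z,t)$; thus Corollary \ref{coro:weighted_Continue} gives that $K_{t,R}(w)$ is continuous in $t\in\mathbb R^+$. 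A Fatou/normal families argument (restricting minimizers from $B_R$ to any fixed compact and extracting a locally uniform subsequential limit) shows that $K_{t,R}(w)\downarrow K_t(w)$ as $R\to\infty$ for each fixed $t>0$.

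From this decreasing convergence one gets $\limsup_{t\to t_0}K_t(w)\le K_{t_0,R}(w)$ for every $R$, hence $\limsup_{t\to t_0}K_t(w)\le K_{t_0}(w)$, proving upper semi-continuity. Moreover, since $\varphi\ge 0$ the weighted norm $\|f\|_{t\varphi}^2$ is non-increasing in $t$, so $A^2(\mathbb C^n,t\varphi)$ is non-decreasing in $t$ and $t\mapsto K_t(w)$ is non-decreasing. Combined with upper semi-continuity this already yields right-continuity at every $t_0>0$, so the remaining task is to establish left-continuity, i.e.\ $\liminf_{t\uparrow t_0}K_t(w)\ge K_{t_0}(w)$.

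For left-continuity, let $f\in A^2(\mathbb C^n,t_0\varphi)$ be the extremal with $f(w)=1$ and $\|f\|_{t_0}^2=1/K_{t_0}(w)$; one must produce, for each $t<t_0$ near $t_0$, an entire $f_t$ with $f_t(w)=1$ and $\|f_t\|_t^2$ close to $\|f\|_{t_0}^2$. The direct choice $f_t=f$ fails, since $f$ need not lie in $A^2(t\varphi)$. Following Lemma \ref{lm:simple}, I would truncate with a smooth cutoff $\chi_R\equiv 1$ on $B_R$, $\operatorname{supp}(\chi_R)\subset B_{2R}$, $|\bar\partial\chi_R|\lesssim R^{-1}$, and solve $\bar\partial u=f\bar\partial\chi_R$ by applying Theorem \ref{th:Berndtsson-Donnelly-Fefferman} on an auxiliary ball $B_N\supset B_{2R}$ with $\rho_N=|z|^2/N^2-1$, $\psi_N=-\tfrac12\log(-\rho_N)$, and base weight $t\varphi+2n\log|z-w|$ (so that the pole forces $u(w)=0$); a diagonal normal-families extraction as $N\to\infty$ yields a solution $u$ on $\mathbb C^n$. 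Setting $f_{t,R}=\chi_R f-u$ gives an entire function with $f_{t,R}(w)=1$ and $\|f_{t,R}\|_t^2\le 2\|\chi_R f\|_t^2+C\int_{R\le|z|\le 2R}|f|^2 e^{-t\varphi}$. The compactly supported piece satisfies $\|\chi_R f\|_t^2\to\|\chi_R f\|_{t_0}^2\le\|f\|_{t_0}^2$ as $t\to t_0$ by dominated convergence on the bounded set $B_{2R}$.

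The main obstacle is the annular error $\int_{R\le|z|\le 2R}|f|^2 e^{-t\varphi}$ in the regime $t<t_0$: the weight $e^{-t\varphi}$ is strictly larger than $e^{-t_0\varphi}$ there, and the integral need not be finite for fixed $R$. Taming this requires choosing $R=R(t)\to\infty$ as $t\uparrow t_0$ tailored to the growth rate of $\varphi$ at infinity, exploiting the decisive hypothesis $\varphi\in\mathcal F$ (positivity plus divergence) and a pointwise bound on $f$ via the subharmonicity of $|f|^2$ to convert the $L^2(t_0\varphi)$-integrability of $f$ into the decay needed to dominate the larger weight on the annulus. This coupling of $R$ and $t$ is the technical heart of the argument.
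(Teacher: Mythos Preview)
Your overall architecture --- upper semicontinuity via a Fatou/normal-families argument, then lower semicontinuity by truncating the extremal $f=K_{t_0}(\cdot,w)$ and correcting with a $\bar\partial$-solution --- matches the paper's. The monotonicity observation ($t\mapsto K_t(w)$ nondecreasing since $\varphi>0$) is a nice shortcut. But the $\bar\partial$ step, as you set it up, does not close.

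The difficulty is not where you locate it. The annular integral $\int_{R\le|z|\le 2R}|f|^2 e^{-t\varphi}$ is always finite (the annulus is compact, $\varphi$ continuous), and is easily controlled: first choose $R$ large so that $\int_{|z|\ge R}|f|^2 e^{-t_0\varphi}<\varepsilon$, then take $|t-t_0|$ so small that $e^{(t_0-t)\varphi}\le 2$ on the compact annulus. The real problem is the $\bar\partial$-estimate itself. With $\psi_N=-\tfrac12\log(1-|z|^2/N^2)$ on $B_N$, the Levi form $i\partial\bar\partial\psi_N$ on the fixed annulus $\{R\le|z|\le 2R\}$ is of size $O(1/N^2)$, so $|\bar\partial\chi_R|^2_{i\partial\bar\partial(\varphi+\psi_N)}$ is of order $N^2/R^2$ unless $i\partial\bar\partial(t\varphi)$ or $i\partial\bar\partial(2n\log|z-w|)$ supplies the missing curvature; but $\varphi$ may be pluriharmonic on the annulus, and $\log|z-w|$ is harmonic ($n=1$) or degenerate in the radial direction ($n\ge 2$). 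Hence the bound on $u_N$ blows up with $N$ and the diagonal extraction gives nothing.

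The paper's device is to build the auxiliary function from $\varphi$ itself: set $\psi=-\log(2/t_0+\varphi)$. Then $\psi\to-\infty$ as $|z|\to\infty$ (so $-\psi$ is an exhaustion), a direct computation gives $i\partial\bar\partial(t\varphi+\psi)\ge i\partial\psi\wedge\bar\partial\psi$ for $|t-t_0|\le t_0/2$, and one applies Theorem~\ref{th:Berndtsson-Donnelly-Fefferman} directly on $\mathbb C^n$ with the cutoff $\lambda_\varepsilon=\chi(\log(-\psi)+\log\varepsilon)$. The support of $\bar\partial\lambda_\varepsilon$ is a sublevel set of $\varphi$, on which $\varphi$ is bounded by a constant depending only on $\varepsilon$; this is exactly what lets one replace $e^{-t\varphi}$ by $e^{-t_0\varphi}$ for $|t-t_0|\le\eta_\varepsilon$, and the $|\bar\partial\lambda_\varepsilon|^2_{i\partial\bar\partial(t\varphi+\psi)}\le C/\psi^2\le C\varepsilon^2$ factor makes the correction small. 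No exhaustion by balls is needed.
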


 \begin{proof}
   Let $t_0\in {\mathbb R}^+$. Set
   $$
   \psi=- \log (2/t_0+\varphi).
   $$
      We then have
   \begin{eqnarray*}
   i\partial\bar{\partial} (t\varphi+\psi) & = & \frac{2t/t_0-1+ t\varphi}{2/t_0+\varphi}i\partial\bar{\partial}\varphi+\frac{i\partial\varphi\wedge \bar{\partial}\varphi}{(2/t_0+\varphi)^2}\\
   & \ge &  i\partial\psi\wedge \bar{\partial}\psi
   \end{eqnarray*}
   provided $|t-t_0|\le t_0/2$.
   Let $\chi$ be as above. Set
   $$
   \lambda_\varepsilon=\chi(\log(-\psi)+\log \varepsilon),\ \ \ \varepsilon\ll1.
   $$
      Let $w\in B_R:=\{|z|<R\}$. Applying Theorem \ref{th:Berndtsson-Donnelly-Fefferman} with $\varphi$ and $\psi$ replaced by $t\varphi+\psi/2$ and $\psi/2$ respectively, we get a solution $u_t$ of
   $$
   \bar{\partial}u=K_{t_0}(\cdot,w)\bar{\partial}\lambda_\varepsilon
   $$
   such that
   \begin{eqnarray*}
    \int_{{\mathbb C}^n} |u_t|^2 e^{-t\varphi} & \le & C_0\int_{{\mathbb C}^n} |K_{t_0}(\cdot,w)|^2 |\bar{\partial}\lambda_\varepsilon|^2_{ i\partial\bar{\partial} (t\varphi+\psi)} e^{-t\varphi}\\
    & \le & C_0\int_{\frac1{2\varepsilon}\le-\psi\le \frac1{\varepsilon}} \frac{|K_{t_0}(\cdot,w)|^2}{\psi^2}  e^{-t\varphi}\\
    & \le & C_0 \varepsilon^2 \int_{\frac1{2\varepsilon}\le-\psi\le \frac1{\varepsilon}} {|K_{t_0}(\cdot,w)|^2}  e^{-t_0\varphi}\\
    & \le & C_0 \varepsilon^2 K_{t_0}(w)
       \end{eqnarray*}
       provided $|t-t_0|\le \eta_\varepsilon\ll1$. Here $C_0$ is a universal constant. If $\varepsilon\ll1$, then $B_{R+1}\subset \{-\psi<\frac1{2\varepsilon}\}$. Since $u_t$ is holomorphic on $\{-\psi<\frac1{2\varepsilon}\}$, so the mean value inequality yields
       \begin{eqnarray*}
        |u_t(w)|^2 & \le & {\rm const}_n \int_{B_{R+1}} |u_t|^2\\
        & \le & {\rm const}_{n,t_0,R} \int_{B_{R+1}} |u_t|^2 e^{-t\varphi}\\
        & \le & {\rm const}_{n,t_0,R}\, \varepsilon^2 K_{t_0}(w).
       \end{eqnarray*}
       It follows that $f_t:=\lambda_\varepsilon K_{t_0}(\cdot,w)-u_t$ is an entire function satisfying
       $$
       |f_t(w)|\ge K_{t_0}(w)-{\rm const}_{n,t_0,R}\,\varepsilon
       $$
       and
       $$
       \|f_t\|_{L^2({\mathbb C}^n,t\varphi)}\le (1+C_0\varepsilon)\sqrt{K_{t_0}(w)}
       $$
       provided $|t-t_0|\le \eta_\varepsilon\ll1$. Thus
       $$
       {\lim\inf}_{t\rightarrow t_0} K_t(w)\ge K_{t_0}(w).
       $$
       Interchanging the roles of $t$ and $t_0$, we obtain
       $$
       {\lim}_{t\rightarrow t_0} K_t(w)=K_{t_0}(w).
       $$
 \end{proof}

 \begin{problem}
  Is $e^{-c_\infty(\varphi)\varphi}\notin L^1({\mathbb C}^n)$ for every $\varphi\in {\mathcal F}$?
 \end{problem}

\section{Proof of Theorem \ref{th:Weighted_Holder}}
   It suffices to verify the following two propositions.

 \begin{proposition}\label{prop:Holder_Diagonal_1}
 If $w\in \Omega\backslash \{0\}$, then $K_t(w)$ is H\"older continuous of order $\beta$ at $t=0$ for every $\beta<\frac{c_0(\varphi_0)-1}{c_0(\varphi_0)+1}\alpha$.
 \end{proposition}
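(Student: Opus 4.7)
The plan is to compare $K_t(w)$ and $K_0(w)$ through their extremal characterizations, transferring the extremal function of one kernel into an admissible competitor for the other via a Berndtsson-type $L^2$-estimate for $\bar\partial$ with a cut-off scale optimised in $|t|$. Let $f_0:=K_0(\cdot,w)/\sqrt{K_0(w)}$, so that $\|f_0\|_{L^2(\Omega,\varphi_0)}=1$ and $f_0(w)=\sqrt{K_0(w)}$. First I choose a smooth cutoff $\chi=\chi_\delta$ (obtained via a Demailly-type smoothing of $\varphi_0$) with $\chi\equiv 1$ on $\{e^{\varphi_0}\ge\delta\}$ and $\chi\equiv 0$ on $\{e^{\varphi_0}\le\delta/2\}$, where $\delta=\delta(|t|)$ is a free parameter; since $e^{\varphi_0(w)}>0$ and $e^{\varphi_0}$ is continuous, $\chi\equiv 1$ near $w$ once $\delta$ is small enough. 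Next I solve
\[
\bar\partial u_t = f_0\,\bar\partial\chi
\]
on $\Omega$ via Theorem~\ref{th:Berndtsson-Donnelly-Fefferman}, applied with weight $\varphi_t+2n\log|z-w|$ (which forces $u_t(w)=0$ by the non-integrability of $|z-w|^{-2n}$) together with an auxiliary psh weight $\psi$ chosen so that its Hessian absorbs $|\bar\partial\chi|^2$; then $g_t:=\chi f_0-u_t\in A^2(\Omega,\varphi_t)$ is holomorphic with $g_t(w)=\sqrt{K_0(w)}$, so $K_t(w)\ge K_0(w)/\|g_t\|_{L^2(\Omega,\varphi_t)}^{\,2}$.

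The argument reduces to the two bounds
\[
\|\chi f_0\|_{L^2(\Omega,\varphi_t)}^{\,2}\le 1+O(|t|^\alpha/\delta)\qquad\text{and}\qquad \|u_t\|_{L^2(\Omega,\varphi_t)}^{\,2}\le O(\delta^{c_0(\varphi_0)-1-\varepsilon})
\]
for every $\varepsilon>0$. The first follows from the identity $e^{-\varphi_t}-e^{-\varphi_0}=(e^{\varphi_0}-e^{\varphi_t})/(e^{\varphi_0}e^{\varphi_t})$ together with $|e^{\varphi_t}-e^{\varphi_0}|\le C|t|^\alpha$: on the support of $\chi$, contained in $\{e^{\varphi_0}\ge\delta/2\}$, one has $e^{\varphi_t}\ge e^{\varphi_0}/2$ as soon as $\delta\gg|t|^\alpha$, so $|e^{-\varphi_t}-e^{-\varphi_0}|\le C'|t|^\alpha e^{-2\varphi_0}$; integrating against $|f_0|^2$ and using $e^{-\varphi_0}\le 2/\delta$ yields the claim. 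The second combines the Berndtsson--Donnelly--Fefferman estimate with the measure bound $|\{z\in\Omega:e^{\varphi_0(z)}\le\delta\}|\le C\delta^q$ for every $q<c_0(\varphi_0)$ (Chebyshev applied to $\int_\Omega e^{-q\varphi_0}<\infty$) together with the pointwise bound $|f_0|\lesssim 1$ on $\{e^{\varphi_0}\le\delta\}$ from the mean value inequality. Passing to the triangle inequality $\|g_t\|\le\|\chi f_0\|+\|u_t\|$ then gives $\|g_t\|^2\le 1+O(|t|^\alpha/\delta)+2\|\chi f_0\|\,\|u_t\|+\|u_t\|^2$, in which the cross term dominates $\|u_t\|^2$; balancing $|t|^\alpha/\delta\sim\delta^{(c_0(\varphi_0)-1)/2}$ by choosing $\delta=|t|^{2\alpha/(c_0(\varphi_0)+1)}$ produces $\|g_t\|^2\le 1+O(|t|^\beta)$ for every $\beta<(c_0(\varphi_0)-1)\alpha/(c_0(\varphi_0)+1)$, and hence $K_t(w)\ge K_0(w)(1-O(|t|^\beta))$. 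The reverse inequality follows by running the same argument with $f_t:=K_t(\cdot,w)/\sqrt{K_t(w)}$ in place of $f_0$.

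The main obstacle I expect is the concrete execution of the $\bar\partial$-step: since $\varphi_0$ is only continuous (with $\varphi_0(0)=-\infty$), neither $\chi$ nor the natural auxiliary psh weight $\psi$ (typically built from $-\log(-\log e^{\varphi_0})$ or $-\log(-\varphi_0)$) can be substituted verbatim into Theorem~\ref{th:Berndtsson-Donnelly-Fefferman}, and one must either replace $\varphi_0$ by a Demailly-type smoothing $\varphi_0^{(\epsilon)}$ and pass to the limit after the $L^2$-estimate, or invoke the distributional reformulation noted in the remark following~(\ref{eq:L2Minimal}). The sharp exponent $(c_0(\varphi_0)-1)\alpha/(c_0(\varphi_0)+1)$ reflects the unavoidable appearance of the cross term $\|\chi f_0\|\,\|u_t\|$ in the triangle-inequality expansion of $\|g_t\|^2$, whose power in $\delta$ is $(c_0(\varphi_0)-1)/2$ rather than $c_0(\varphi_0)-1$; getting this balance right, and verifying that the $\psi$-correction does not destroy the $\delta^{c_0(\varphi_0)-1-\varepsilon}$ bound on $\|u_t\|^2$, is the key technical step.
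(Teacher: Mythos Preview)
Your proposal is correct and follows essentially the same route as the paper's proof: the paper uses the explicit cutoff $\lambda_{\gamma,\varepsilon}=\chi_\gamma(\log(-\varphi_0)-\log(-\log\varepsilon))$ and the auxiliary weight $\psi=-\tfrac12\log(-\varphi_0)$ (which already satisfies the Donnelly--Fefferman condition, so no Demailly smoothing is needed---the distributional form of the estimate suffices), together with the weight $\varphi_t+2n\log|z-w|+\psi$ to force $u_t(w)=0$, the Chebyshev-type measure bound from $\int e^{-c\varphi_0}<\infty$, and the same balance $\varepsilon=|t|^{2\alpha/(c\gamma+1)}$. Your identification of the cross term $\|\chi f_0\|\cdot\|u_t\|$ as the source of the exponent $(c_0(\varphi_0)-1)/(c_0(\varphi_0)+1)$ matches the paper's computation exactly.
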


 \begin{proof}
  Set
$
f(z):=K_{0} (z,w)/\sqrt{K_{0}(w)}$.
Fix $1/2<\gamma<1$ for a moment. Let $\chi_\gamma:{\mathbb R}\rightarrow [0,1]$ be a smooth function satisfying $\chi_\gamma|_{(0,\infty)}=0$ and $\chi_\gamma|_{(-\infty,\log \gamma)}=1$. Set
$$
\lambda_{\gamma,\varepsilon}=\chi_\gamma(\log(-\varphi_{0})-\log(-\log\varepsilon)),\ \ \ 0<\varepsilon\ll1.
$$
Applying Theorem \ref{th:Berndtsson-Donnelly-Fefferman} with $\psi =-\frac12 \log (-\varphi_{0})$ and $\varphi$ replaced by $\varphi_t+2n\log |z-w|+\psi$, we find a solution $u_t$ of  $\bar{\partial} u= f \bar{\partial}\lambda_{\gamma,\varepsilon}$ on $\Omega$ satisfying
\begin{eqnarray*}
\int_{\Omega} |u_t|^2 e^{-\varphi_t-2n\log |z-w|} & \le & 24 \int_{\Omega} |f|^2 |\bar{\partial}\lambda_{\gamma,\varepsilon}|^2_{i\partial\bar{\partial}\psi}e^{-\varphi_t-2n\log |z-w|}\\
& \le & \frac{C}{\delta_{\gamma,\varepsilon}(w)^{2n}}\int_{A_{1,\varepsilon}\backslash A_{\gamma,\varepsilon}}{|f|^2}e^{-\varphi_t}.
\end{eqnarray*}
provided $\varepsilon\ll 1$, where $C>0$ is a generic constant independent of $t,\varepsilon,w$,
$$
A_{s,\varepsilon}=\left\{ -\varphi_{0}\le -s\log \varepsilon\right\},\ \ \ s>0,
$$
and $\delta_{\gamma,\varepsilon}(w)=d(w,A_{1,\varepsilon}\backslash A_{\gamma,\varepsilon})$.
 Since
$$
|e^{\varphi_t(z)}-e^{\varphi_{0}(z)}|\le C |t|^\alpha,\ \ \ z\in \Omega,
$$
so we have
$$
 e^{\varphi_{0}-\varphi_t}  \le  1+C|t|^\alpha e^{-\varphi_t}\le 1+\frac{C|t|^\alpha}{e^{\varphi_{0}}-C|t|^\alpha}=\frac{1}{1-C|t|^\alpha e^{-\varphi_{0}}}\le  \frac{1}{1-C|t|^\alpha/\varepsilon},
$$
on $A_{1,\varepsilon}$ provided $|t|^\alpha/\varepsilon\ll 1$,
and
$$
e^{\varphi_t-\varphi_0}\le 1+C|t|^\alpha e^{-\varphi_0}\le 1+C|t|^\alpha/\varepsilon.
$$
Notice that
$$
|f(z)|\le \sqrt{K_0(z)}\le C
$$
on $A_{1,\varepsilon}\backslash A_{\gamma,\varepsilon}$ provided $\varepsilon \ll1$, for $e^{\varphi_0}$ is a continuous function with an isolated zero at $0$. Since $e^{-\varphi_0}$ is $L^1$ in a neighborhood $U$ of $0$, so the volume $|A_{1,\varepsilon}\backslash A_{\gamma,\varepsilon}|$ of $A_{1,\varepsilon}\backslash A_{\gamma,\varepsilon}$ satisfies
$$
|A_{1,\varepsilon}\backslash A_{\gamma,\varepsilon}|\le   \varepsilon^{c\gamma}\int_U e^{-c\varphi_0}
$$
for every $1<c<c_0(\varphi_0)$, and
$$
\int_{A_{1,\varepsilon}\backslash A_{\gamma,\varepsilon}}{|f|^2}e^{-\varphi_0}\le C |A_{1,\varepsilon}\backslash A_{\gamma,\varepsilon}|/\varepsilon\le {\rm const}_c\, \varepsilon^{c\gamma-1}.
$$
It follows that
\begin{eqnarray*}
 \int_{\Omega} |u_t|^2 e^{-\varphi_t-2n\log |z-w|} & \le & \frac{{\rm const}_c}{\delta_{\gamma,\varepsilon}(w)^{2n}}\cdot \frac{\varepsilon^{c\gamma-1}}{1-C|t|^\alpha/\varepsilon}.
\end{eqnarray*}
Set $f_t=\lambda_{\gamma,\varepsilon} f-u_t$. Then $f_t$ is holomorphic on $\Omega$ with $f_t(w)=f(w)=\sqrt{K_{0}(w)}$ and
\begin{eqnarray*}
\|f_t\|_{L^2(\Omega,\varphi_t)} & = & \|\lambda_{\gamma,\varepsilon}f\|_{L^2(\Omega,\varphi_t)}+\|u_t\|_{L^2(\Omega,\varphi_t)}\\
& \le & \frac{1}{(1-C|t|^\alpha/\varepsilon)^{1/2}}\left(1+\frac{{\rm const}_c}{\delta_{\gamma,\varepsilon}(w)^{n}}\varepsilon^{\frac{c\gamma-1}2}\right)\\
& =: & a_{t,\varepsilon}(w),
\end{eqnarray*}
so that
$$
K_t(w)\ge K_{0}(w)/a_{t,\varepsilon}(w)^{2}.
$$
Next we set
$$
 g_t(z):=K_{t} (z,w)/\sqrt{K_{t}(w)},\ \ \ z\in \Omega.
$$
Similar as above, we have a solution $u_0$ of $\bar{\partial} u= g_t \bar{\partial}\lambda_{\gamma,\varepsilon}$ on $\Omega$ satisfying
\begin{eqnarray*}
\int_{\Omega} |u_0|^2 e^{-\varphi_0-2n\log |z-w|} & \le & 24 \int_{\Omega} |g_t|^2 |\bar{\partial}\lambda_{\gamma,\varepsilon}|^2_{i\partial\bar{\partial}\psi}e^{-\varphi_0-2n\log |z-w|}\\
& \le & \frac{C}{\delta_{\gamma,\varepsilon}(w)^{2n}}\int_{A_{1,\varepsilon}\backslash A_{\gamma,\varepsilon}}{|g_t|^2}e^{-\varphi_0}\\
& \le & \frac{C\varepsilon^{c\gamma-1}}{\delta_{\gamma,\varepsilon}(w)^{2n}}
\end{eqnarray*}
for $|g_t(z)|\le \sqrt{K_t(z)}\le C$ provided $\varepsilon\ll1$. Clearly, $g_0:=\lambda_{\gamma,\varepsilon} g_t-u_0$ is holomorphic on $\Omega$ such that $g_0(w)=\sqrt{K_t(w)}$ and
\begin{eqnarray*}
\|g_0\|_{L^2(\Omega,\varphi_0)} & = & \|\lambda_{\gamma,\varepsilon}g_t\|_{L^2(\Omega,\varphi_0)}+\|u_0\|_{L^2(\Omega,\varphi_0)}\\
& \le & (1+C|t|^\alpha/\varepsilon)^{1/2}\left(1+\frac{C\varepsilon^{\frac{c\gamma-1}2}}{\delta_{\gamma,\varepsilon}(w)^{n}}\right)\\
& =: & b_{t,\varepsilon}(w),
\end{eqnarray*}
so that
$$
K_0(w)\ge  K_{t}(w)/b_{t,\varepsilon}(w)^{2}.
$$
Notice that
\begin{eqnarray*}
a_{t,\varepsilon}(w) & = & 1+O\left(|t|^\alpha/\varepsilon +\varepsilon^{\frac{c\gamma-1}2}/\delta_{\gamma,\varepsilon}(w)^n\right)\\
b_{t,\varepsilon}(w) & = & 1+O\left(|t|^\alpha/\varepsilon +\varepsilon^{\frac{c\gamma-1}2}/\delta_{\gamma,\varepsilon}(w)^n\right)
\end{eqnarray*}
provided $|t|^\alpha/\varepsilon +\varepsilon^{\frac{c\gamma-1}2}/\delta_{\gamma,\varepsilon}(w)^n\ll 1$. Thus
 \begin{equation}\label{eq:Holder}
  |K_t(w)-K_0(w)|\le C\left(|t|^\alpha/\varepsilon +\varepsilon^{\frac{c\gamma-1}2}/\delta_{\gamma,\varepsilon}(w)^n\right).
 \end{equation}
 If $\varepsilon=|t|^{\frac{2\alpha}{c\gamma+1}}\ll 1$, then $\delta_{\gamma,\varepsilon}(w)\ge {\rm const}_w>0$, so that $K_t(w)$ is H\"older continuous of order $\frac{c\gamma-1}{c\gamma+1}\alpha$ at $t=0$. Since $c $ and $\gamma$ can be arbitrarily close to $c_0(\varphi_0)$ and $1$ respectively,  we conclude the proof.
\end{proof}

 \begin{proposition}\label{prop:Holder_Diagonal_2}
 $K_t(0)$ is H\"older continuous of order $\beta$ at $t=0$ for every
   $
   \beta<\frac{\eta_0\alpha}{1+\eta_0\tau_0},
   $
   where
   $$
    \eta_0=\min\left\{\frac{1}{\mathcal L_0(\varphi)},\frac{c_0(\varphi_0)-1}{2n}\right\},\ \ \ \tau_0=\min\left\{\frac{c_0(\varphi_0)-1}{2\eta_0}-n,1\right\}.
   $$
 \end{proposition}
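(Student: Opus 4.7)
The plan is to follow the strategy of Proposition~\ref{prop:Holder_Diagonal_1} closely, with one essential modification to handle the new difficulty that the outer cutoff $\lambda_{\gamma,\varepsilon}$ vanishes at $w=0$ (since $0\notin A_{1,\varepsilon}$), so that the test function $\lambda_{\gamma,\varepsilon}f$ used there does not preserve the value at the origin. I would resolve this by introducing an auxiliary inner cutoff $\mu_r$ with $\mu_r=1$ on $B(0,r)$ and $\mu_r=0$ outside $B(0,2r)$, at a scale $r$ to be optimised. Setting $f:=K_0(\cdot,0)/\sqrt{K_0(0)}$, take as test function
\[
F:=\mu_r\,f(0)+(1-\mu_r)\,\lambda_{\gamma,\varepsilon}\,f,
\]
so that $F(0)=f(0)=\sqrt{K_0(0)}$. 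Then apply Theorem~\ref{th:Berndtsson-Donnelly-Fefferman} with $\psi=-\tfrac12\log(-\varphi_0)$ and composite weight $\varphi_t+2n\log|z|+\psi$ (exactly as in the proof of Proposition~\ref{prop:Holder_Diagonal_1} with $w=0$) to solve $\bar\partial u=\bar\partial F$, with $u(0)=0$ forced by the $|z|^{-2n}$ factor. The function $f_t:=F-u$ is then holomorphic on $\Omega$ with $f_t(0)=\sqrt{K_0(0)}$, yielding the variational bound $K_t(0)\ge K_0(0)/\|f_t\|^2_{L^2(\Omega,\varphi_t)}$.

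The identity
\[
\bar\partial F=\bar\partial\mu_r\cdot\bigl(f(0)-\lambda_{\gamma,\varepsilon}f\bigr)+(1-\mu_r)\,f\,\bar\partial\lambda_{\gamma,\varepsilon}
\]
splits the solution into two error pieces. The second, supported on $A_{1,\varepsilon}\setminus A_{\gamma,\varepsilon}$ but vanishing on $B(0,r)$, can be treated exactly as in Proposition~\ref{prop:Holder_Diagonal_1}, with the distance $\delta_{\gamma,\varepsilon}(w)$ replaced by the inner radius $r$; it contributes an error of order $\varepsilon^{(c\gamma-1)/2}/r^n$. The first is supported on the annulus $r\le|z|\le 2r$: using the {\L}ojasiewicz lower bound $e^{\varphi_0(z)}\ge c\,|z|^{\mathcal L}$ for any $\mathcal L>\mathcal L_0(\varphi_0)$, one can choose $r$ at least of order $\varepsilon^{1/\mathcal L}$ so that the annulus lies inside $A_{\gamma,\varepsilon}$ and $\lambda_{\gamma,\varepsilon}\equiv 1$ there; combined with the pointwise estimate $|f(0)-f(z)|\le \mathrm{const}\cdot|z|$ (Lipschitz regularity of the holomorphic $f$ at $0$, which is bounded on a fixed neighbourhood of $0$ where $e^{\varphi_0}$ stays away from zero), the volume-integrability bound $\int_{B(0,2r)}e^{-\varphi_0}\lesssim r^{2n(c-1)/c}$ (any $c<c_0(\varphi_0)$), and the $|z|^{-2n}$ weight, this contributes an error of order $r^{\tau_0}$.

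Balancing the three resulting error terms
\[
\frac{|t|^\alpha}{\varepsilon}\,,\qquad \frac{\varepsilon^{(c\gamma-1)/2}}{r^n}\,,\qquad r^{\tau_0}\,,
\]
subject to the {\L}ojasiewicz constraint $r\gtrsim\varepsilon^{1/\mathcal L_0(\varphi_0)}$ and the volume constraint $r\lesssim\varepsilon^{(c_0(\varphi_0)-1)/(2n)}$ (needed so that the inner $L^2$-mass $K_0(0)\int_{B(0,2r)}\mu_r^2\,e^{-\varphi_t}$ does not dominate the outer one), leads to the optimal choice $r=\varepsilon^{\eta_0}$, the minimum $\eta_0=\min\{1/\mathcal L_0(\varphi_0),(c_0(\varphi_0)-1)/(2n)\}$ recording which of the two constraints is binding, followed by $\varepsilon=|t|^{\alpha/(1+\eta_0\tau_0)}$; this yields the desired H\"older exponent $\beta<\eta_0\alpha/(1+\eta_0\tau_0)$. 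The reverse inequality follows by interchanging the roles of $t$ and $0$, exactly as in Proposition~\ref{prop:Holder_Diagonal_1}.

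The main technical obstacle I anticipate is the precise bookkeeping of the annular $\bar\partial\mu_r$ contribution: combining the pointwise Lipschitz estimate, the openness/volume bound, and the $|z|^{-2n}$ weight is what pins down the formula $\tau_0=\min\{(c_0(\varphi_0)-1)/(2\eta_0)-n,\,1\}$, with the cutoff at $1$ reflecting the Lipschitz regularity barrier for a generic holomorphic function at an interior point. A secondary difficulty is controlling the inner mass $K_0(0)\int_{B(0,2r)}\mu_r^2\,e^{-\varphi_t}$ uniformly in $t$, which requires combining the assumption $|e^{\varphi_t}-e^{\varphi_0}|\le C|t|^\alpha$ with the {\L}ojasiewicz estimate to confine the subregion of $B(0,2r)$ where $e^{\varphi_t}$ might be significantly smaller than $e^{\varphi_0}$, and then absorbing this subregion into the scale $r$ via the volume constraint.
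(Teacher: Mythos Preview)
Your approach differs substantially from the paper's, and it contains a genuine gap.

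\textbf{The paper's argument.} The paper does not redo the $\bar\partial$ construction at $w=0$ at all. Instead it reuses the estimate \eqref{eq:Holder} from Proposition~\ref{prop:Holder_Diagonal_1} at an auxiliary point $w\neq 0$, combines it with the \L ojasiewicz bound (which gives $\delta_{\gamma,\varepsilon}(w)\ge |w|/2$ once $\varepsilon^{\gamma/\nu}\ll |w|$), and then transfers to the origin via the elementary Lipschitz estimate $|K_t(w)-K_t(0)|\le C|w|$ obtained from Cauchy's integral formula. The triangle inequality and an optimisation over $|w|$ (with $\varepsilon=|w|^{1/\eta}$) finish the proof. No inner cutoff, no new $\bar\partial$ equation.

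\textbf{The gap in your approach.} Your claimed bound ``error of order $r^{\tau_0}$'' for the inner piece $\bar\partial\mu_r\cdot(f(0)-f)$ does not hold. The Donnelly--Fefferman estimate with weight $\varphi_t+2n\log|z|+\psi$ controls
\[
\int_{\Omega}|u|^2 e^{-\varphi_t}|z|^{-2n}\ \le\ C\int_{\Omega}|v|^2_{i\partial\bar\partial(\varphi+\psi)}\,e^{-\varphi_t}|z|^{-2n},
\]
and on the annulus $r\le|z|\le 2r$ the factor $|z|^{-2n}\sim r^{-2n}$ is exactly what you cannot afford. Even granting $|\bar\partial\mu_r|\sim r^{-1}$, $|f(0)-f|\le Cr$, and your volume bound $\int_{B(0,2r)}e^{-\varphi_0}\lesssim r^{2n(c-1)/c}$, the inner contribution is
\[
\lesssim\ r^{-2}\cdot r^{2}\cdot r^{-2n}\cdot r^{2n(c-1)/c}\ =\ r^{-2n/c},
\]
which \emph{blows up} as $r\to 0$; using the \L ojasiewicz pointwise bound $e^{-\varphi_0}\le Cr^{-\nu}$ gives $r^{-\nu}$ instead, no better. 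The very weight $|z|^{-2n}$ that forces $u(0)=0$ destroys the smallness of the inner term. (There is also a secondary issue: $\mu_r$ is radial, so $\bar\partial\mu_r$ lies in the kernel of $i\partial\bar\partial\log|z|^2$, and there is no a priori lower bound on $i\partial\bar\partial(\varphi+\psi)$ in that direction; thus $|\bar\partial\mu_r|^2_{i\partial\bar\partial(\varphi+\psi)}$ is not obviously finite.)

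The paper sidesteps all of this by working at $w\neq 0$ and paying instead the Lipschitz price $|w|$; this is where the ``$1$'' in the definition of $\tau_0$ actually comes from.
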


 \begin{proof}
 Let $\nu>\mathcal L_0(\varphi_0)$. We then have
 $$
 e^{\varphi_0(z)}\ge {\rm const}_\nu\, |z|^\nu
 $$
 on $A_{1,\varepsilon}\backslash A_{\gamma,\varepsilon}$, provided $\varepsilon\ll1$. Thus
 $$
 A_{1,\varepsilon}\backslash A_{\gamma,\varepsilon}\subset \{z:|z|\le {\rm const}_\nu\,\varepsilon^{\gamma/\nu}\},
 $$
 so that
 $$
 \delta_{\gamma,\varepsilon}(w)\ge |w|/2
 $$
 provided $\varepsilon^{\gamma/\nu}/|w|\ll 1$.
 Set
 $$
 \eta=\min\left\{\frac{\gamma}{\nu},\frac{c\gamma-1}{2n}\right\},\ \ \ \tau=\min\left\{\frac{c\gamma-1}{2\eta}-n,1\right\}.
 $$
  If $\varepsilon=|w|^{1/\eta}/C$ with $C\gg 1$, we then have
 $$
|K_t(w)-K_0(w)|\le C\left(\frac{|t|^\alpha}{|w|^{1/\eta}}+|w|^{\frac{c\gamma-1}{2\eta}-n}\right)
$$
in view of (\ref{eq:Holder}), provided $|t|^\alpha/|w|^{1/\eta}\ll 1$.
On the other hand, we claim that
$$
|K_t(w)-K_t(0)|\le C|w|.
$$
To see this, notice first that
$
K_t(z)\le K_\Omega(z)\le C
$
for all $z$ in a small neighborhood $U$ of $0$, where $K_\Omega$ is the (standard) Bergman kernel on $\Omega$.
Since
$$
\int_\Omega |K_t(\cdot,z)|^2 \le \int_\Omega |K_t(\cdot,z)|^2 e^{-\varphi_t}=K_t(z)\le C
$$
for all $z\in U$, it follows from Cauchy's integrals that for $w,w'$ sufficiently close to $0$,
$$
|K_t(w',w)-K_t(w)|\le C|w-w'|
$$
$$
|K_t(w,w')-K_t(w')|\le C|w-w'|,
$$
so that
$$
|K_t(w)-K_t(w')|\le C |w-w'|.
$$

Thus
\begin{eqnarray*}
 |K_t(0)-K_0(0)| & \le & C\left(\frac{|t|^\alpha}{|w|^{1/\eta}}+|w|^{\frac{c\gamma-1}{2\eta}-n}+|w|\right)\\
                 & \le &  C\left(\frac{|t|^\alpha}{|w|^{1/\eta}}+|w|^{\tau}\right)\\
                 & \le & C |t|^{\frac{\eta\tau\alpha}{1+\eta\tau}}
\end{eqnarray*}
provided $|w|=|t|^{\frac{\eta\alpha}{1+\eta\tau}}$. Since $c,\nu$ and $\gamma$ can be arbitrarily close to $c_0(\varphi_0),\mathcal L_0(\varphi_0)$ and $1$ respectively, so we conclude the proof.
\end{proof}

\begin{problem}
How to get the H\"older continuity of $K_t$ in $t$ when $c_0(\varphi_0)\le 1$?
\end{problem}

\section{Proof of Theorem \ref{th:principle}}

\begin{proposition}\label{prop:BergmanIntegral}
 Let $\Omega\subset {\mathbb C}^n$ be a bounded pseudoconvex domain. Let $\rho$ be a negative continuous psh  function on $\Omega$. Set
  $$
 \Omega^\varepsilon=\{z\in \Omega:-\rho(z)>\varepsilon\},\ \ \ \varepsilon>0.
 $$
  Let $S$ be a compact set in $\Omega$. Suppose
  $$
  S':=\{z\in \Omega:d(z,S)\le d(S,\partial \Omega)/2\}\subset \Omega^{\varepsilon_0}
  $$
  for some $\varepsilon_0>0$.  Let $K_{\Omega}$ denote the Bergman kernel on $\Omega$.
   Then for every  $0<r<1$,
  $$
  \int_{-\rho\le\varepsilon} |K_{\Omega}(\cdot,w)|^2 \le {\rm const}_{n,r}\, d(S,\partial\Omega)^{-2n} (\varepsilon/a)^{r}
  $$
  for all $w\in S$ and $\varepsilon\le \varepsilon_r\ll \varepsilon_0$.
  Here  $a=\inf_{S'}(-\rho)$.
\end{proposition}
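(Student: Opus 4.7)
The plan is to combine a $\bar\partial$-solution identity with Berndtsson's weighted estimate and then iterate a self-improving inequality. Fix a smooth cutoff $\chi\in C^\infty(\mathbb{R},[0,1])$ with $\chi|_{(-\infty,0]}=0$ and $\chi|_{[\log 2,\infty)}=1$, and define $\lambda_\varepsilon(z):=\chi(\log(-\rho(z)/\varepsilon))$, so that $\lambda_\varepsilon\equiv 1$ on $\Omega^{2\varepsilon}$ and $\lambda_\varepsilon\equiv 0$ on $\{-\rho\le\varepsilon\}$. Let $u_\varepsilon$ be the $L^2(\Omega)$-minimal solution of $\bar\partial u=K_\Omega(\cdot,w)\,\bar\partial\lambda_\varepsilon$. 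Since $u_\varepsilon\perp A^2(\Omega)$ and $\lambda_\varepsilon(w)=1$ whenever $w\in S$ and $\varepsilon\le\varepsilon_0/2$, the reproducing property applied to the holomorphic function $\lambda_\varepsilon K_\Omega(\cdot,w)-u_\varepsilon$ yields the key identity
\begin{equation*}
u_\varepsilon(w)=\int_\Omega(1-\lambda_\varepsilon)|K_\Omega(\cdot,w)|^2\;\ge\; I(\varepsilon),\qquad I(\varepsilon):=\int_{\{-\rho\le\varepsilon\}}|K_\Omega(\cdot,w)|^2,
\end{equation*}
reducing the problem to an upper bound for $u_\varepsilon(w)$.

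For the upper bound I would apply Corollary 2.3 with $\varphi=0$ and $\psi=-r\log(-\rho)$. A direct $\partial\bar\partial$-computation shows $-\log(-\rho)$ is psh and satisfies the Donnelly--Fefferman condition with $r=1$, whence $\psi$ is psh with $r\,i\partial\bar\partial\psi\ge i\partial\psi\wedge\bar\partial\psi$. Since $\bar\partial\lambda_\varepsilon=-(\chi'/r)\bar\partial\psi$, one gets $|\bar\partial\lambda_\varepsilon|^2_{i\partial\bar\partial\psi}\le|\chi'|^2/r$, and because $\bar\partial\lambda_\varepsilon$ is supported on $\{\varepsilon\le-\rho\le 2\varepsilon\}$, where $e^{-\psi}=(-\rho)^r\le(2\varepsilon)^r$, the corollary yields
\begin{equation*}
\int_\Omega|u_\varepsilon|^2(-\rho)^r\;\le\;\frac{C_r}{1-r}\,\varepsilon^r\,I(2\varepsilon).
\end{equation*}

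Next I localize to the ball $B:=B(w,d(S,\partial\Omega)/2)\subset S'$. For $\varepsilon\le\varepsilon_0/2$ the support of $\bar\partial\lambda_\varepsilon$ misses $S'\subset\Omega^{\varepsilon_0}\subset\Omega^{2\varepsilon}$, so $u_\varepsilon$ is holomorphic on $B$, and $(-\rho)^{-r}\le a^{-r}$ there. Combining the mean value inequality with the weighted estimate above,
\begin{equation*}
|u_\varepsilon(w)|^2\;\le\; C_n\,d(S,\partial\Omega)^{-2n}\int_B|u_\varepsilon|^2\;\le\; C_n\,d(S,\partial\Omega)^{-2n}\,a^{-r}\int_\Omega|u_\varepsilon|^2(-\rho)^r\;\le\; A\,(\varepsilon/a)^r\,I(2\varepsilon),
\end{equation*}
where $A:=C_{r,n}\,d(S,\partial\Omega)^{-2n}$. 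Together with $u_\varepsilon(w)\ge I(\varepsilon)$ this produces the self-improving recursion $I(\varepsilon)^2\le A\,(\varepsilon/a)^r\,I(2\varepsilon)$.

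Finally I would iterate the recursion along $\varepsilon,2\varepsilon,\ldots,2^N\varepsilon$ to obtain
\begin{equation*}
I(\varepsilon)\;\le\; 2^r\,A^{1-2^{-N}}\,(\varepsilon/a)^{r(1-2^{-N})}\,I(2^N\varepsilon)^{2^{-N}},
\end{equation*}
where the constant $2^r$ bounds the telescoped product $\prod_{k\ge 0}2^{rk/2^{k+1}}$. Choosing $N\asymp\log_2(\varepsilon_0/\varepsilon)$ and using the crude estimate $I(\cdot)\le K_\Omega(w)\le C_n\,d(S,\partial\Omega)^{-2n}$ (from the monotonicity of Bergman kernels under the inclusion $B(w,d(S,\partial\Omega))\subset\Omega$) makes $I(2^N\varepsilon)^{2^{-N}}\to 1$ and $A^{1-2^{-N}}\to A$ as $\varepsilon\to 0$, delivering the claimed estimate. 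The principal technical obstacle is that $\rho$ is only assumed continuous while Corollary 2.3 requires $C^2$ weights; this is handled either by first approximating $\rho$ from above by smooth negative psh functions on compact subsets of $\Omega$ and passing to the limit, or by invoking the distributional extension of Corollary 2.3 recorded in the Remark immediately following it.
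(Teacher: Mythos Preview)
Your proposal is correct and follows essentially the same route as the paper: the paper also derives the identity $I(\varepsilon)\le |u(w)|$ from the Bergman projection of a cutoff times $K_\Omega(\cdot,w)$, bounds $|u(w)|$ via the weighted $L^2$-minimal estimate with $\psi=-r\log(-\rho)$ together with the mean value inequality on $S'$, and then iterates the resulting recursion $I(\varepsilon)^2\le A\,(\varepsilon/a)^r\,I(c\varepsilon)$ (with $c=3/2$ rather than your $c=2$) to push the exponent from $r/2$ up to $r$. The only cosmetic differences are the choice of cutoff support and the dyadic step in the iteration; your treatment of the non-smoothness of $\rho$ via the distributional remark after Corollary~2.3 is exactly what the paper intends.
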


\begin{proof}
 Let $\kappa:{\mathbb R}\rightarrow [0,1]$ be a smooth cut-off function such that $\kappa|_{(-\infty,1]}=1$, $\kappa|_{[3/2,\infty)}=0$ and $|\kappa'|\le 2$. We then have
$$
     \int_{ -\rho\le\varepsilon} |K_{\Omega}(\cdot,w)|^2 \le \int_\Omega \kappa(-\rho/\varepsilon) |K_{\Omega}(\cdot,w)|^2.
$$
 By the well-known property of the Bergman projection, we obtain
$$
  \int_\Omega \kappa(-\rho/\varepsilon) K_{\Omega}(\cdot,w)\cdot \overline{K_{\Omega}(\cdot,\zeta)}  = \kappa(-\rho(\zeta)/\varepsilon) K_{\Omega}(\zeta,w)-u(\zeta), \ \ \ \zeta\in\Omega,
$$
where $u$ is the $L^2(\Omega)-$minimal solution of the equation
$$
\bar{\partial} u= \bar{\partial}(\kappa(-\rho/\varepsilon) K_{\Omega}(\cdot,w))=:v.
$$
Since $\kappa(-\rho(w)/\varepsilon)=0$ provided $\frac32 \varepsilon\le \varepsilon_0$, so we have
\begin{equation}\label{eq:BergmanUpper}
 \int_{ -\rho\le\varepsilon} |K_{\Omega}(\cdot,w)|^2\le -u(w).
\end{equation}
  Set
            $$
      \psi=- r\log (-\rho),\ \ \ 0<r<1.
      $$
      Clearly, $\psi$ is psh and satisfies $ri\partial\bar{\partial}\psi\ge i\partial \psi\wedge \bar{\partial}\psi$, so that
             $$
 i \bar{v}\wedge v\le C_0 r^{-1} |\kappa'(-\rho/\varepsilon)|^2 |K_{\Omega}(\cdot,w)|^2 i\partial\bar{\partial}\psi
 $$
 for some numerical constant $C_0>0$. Thus by (\ref{eq:L2Minimal}) we obtain
       \begin{eqnarray*}
   \int_\Omega |u|^2e^{-\psi}
   & \le &  {\rm const}_{r} \int_{\varepsilon\le -\rho\le \frac32\varepsilon} |K_{\Omega}(\cdot,w) |^2 e^{-\psi}\\
   & \le & {\rm const}_{r}\, \varepsilon^{r}\int_{ -\rho\le \frac32\varepsilon} |K_{\Omega}(\cdot,w) |^2.
      \end{eqnarray*}
                   Since $e^{-\psi}\ge a^{r}$ on $S'$ and $u$ is holomorphic there, it follows from the mean value inequality that
    \begin{eqnarray*}
   |u(w)|^2 & \le & {\rm const}_n\, d(S,\partial \Omega)^{-2n} \int_{S'}|u|^2\\
   & \le & {\rm const}_n\, d(S,\partial \Omega)^{-2n} a^{-r} \int_{\Omega}|u|^2 e^{-\psi}\\
   & \le & {\rm const}_{n,r}\, d(S,\partial \Omega)^{-2n} (\varepsilon/a)^{r}\int_{ -\rho\le \frac32\varepsilon} |K_{\Omega}(\cdot,w) |^2.
   \end{eqnarray*}
   Thus by (\ref{eq:BergmanUpper}), we obtain
   \begin{eqnarray*}
   \int_{ -\rho\le\varepsilon} |K_{\Omega}(\cdot,w)|^2 \le  {\rm const}_{n,r}\, d(S,\partial \Omega)^{-n} (\varepsilon/a)^{r/2}\left(\int_{ -\rho\le \frac32\varepsilon} |K_{\Omega}(\cdot,w) |^2 \right)^{1/2}.
   \end{eqnarray*}
   Notice that
   $$
   \int_{-\rho\le \frac32\varepsilon} |K_{\Omega}(\cdot,w) |^2 \le \int_\Omega |K_{\Omega}(\cdot,w) |^2 =K_{\Omega}(w)\le {\rm const}_n\, d(S,\partial\Omega)^{-2n}
   $$
   provided $\frac32\varepsilon\le \varepsilon_0$.
   Thus
   $$
   \int_{ -\rho\le\varepsilon} |K_{\Omega,\varphi}(\cdot,w)|^2 \le {\rm const}_{n,r}\, d(S,\partial \Omega)^{-2n}  (\varepsilon/a)^{r/2}.
   $$
   Replacing $\varepsilon$ by $\frac32\varepsilon$ in the argument above, we obtain
  \begin{eqnarray*}
   \int_{ -\rho\le \frac32\varepsilon} |K_{\Omega}(\cdot,w)|^2  & \le & {\rm const}_{n,r}\, d(S,\partial \Omega)^{-2n} (3/2)^{r/2} (\varepsilon/a)^{r/2}
   \end{eqnarray*}
   provided $(3/2)^2\varepsilon\le \varepsilon_0$.
  Thus we may improve the upper bound by
  $$
   \int_{ -\rho\le\varepsilon} |K_{\Omega}(\cdot,w)|^2 \le {\rm const}_{n,r}\, d(S,\partial \Omega)^{-2n} (\varepsilon/a)^{r/2+r/4}.
   $$
   By induction, we conclude that for every $k\in {\mathbb Z}^+$,
  $$
   \int_{ -\rho\le\varepsilon} |K_{\Omega}(\cdot,w)|^2 \le {\rm const}_{n,r,k}\, d(S,\partial \Omega)^{-2n} (\varepsilon/a)^{r/2+r/4+\cdots+r/2^k}
   $$
   provided $(3/2)^k\varepsilon\le \varepsilon_0$. Since $r/2+r/4+\cdots+r/2^k\rightarrow 1$ as $k\rightarrow \infty$ and $r\rightarrow 1$, we conclude the proof.
\end{proof}

\begin{proof}[Proof of Theorem \ref{th:principle}]
Fix a pair $t\neq t_0$ for a moment.  Set $
\varepsilon=|t-t_0|^\alpha.
$
Since $\Omega_t$ is $\rho_t-$H\"older continuous of order $\alpha$, so there exist positive numbers $\gamma_3\gg \gamma_2\gg\gamma_1\gg 1$ and $\eta>0$ such that
\begin{equation}\label{eq:inclusion1}
 \{-\rho_t>\gamma_2\, \varepsilon\}\subset \{-\rho_{t_0}> \gamma_1\, \varepsilon\}=:\Omega_{t_0}'\subset  \{-\rho_t>\varepsilon\}
\end{equation}
and
\begin{equation}\label{eq:inclusion2}
 \{-\rho_t> (3/2)\gamma_2\,\varepsilon\}\supset \{-\rho_{t_0}> \gamma_3\, \varepsilon\}=:\Omega_{t_0}''
\end{equation}
provided $|t-t_0|\le \eta$. Let $\kappa$ be as above. Set
$$
\lambda_{t,\varepsilon}=1-\kappa(-\rho_t/(\gamma_2\,\varepsilon)).
$$
Let $S$ be a compact subset of the total set
 $$
 \Omega=\{(z,\tau):z\in \Omega_\tau,\tau\in (-1,1)\}.
 $$
 Let
  $
  S_{\tau}=\{z:(z,\tau)\in S\}.
  $
  Without loss of generality, we assume that $S_{t_0}\neq \emptyset$. Thus there exists a sufficiently small number $r_0$ (depending only on $S$) such that
 \begin{equation}\label{eq:inclusion3}
 \{(z,\tau):z\in S_\tau,|\tau-t_0|<r_0\} \subset \Omega_{t_0}'''\times (t_0-r_0,t_0+r_0)
 \end{equation}
 where
 $$
 \Omega_{t_0}'''=\{-\rho_{t_0}> 2\gamma_3\, \varepsilon\},
 $$
 provided $\varepsilon\ll 1$.
  Let $K'_{t_0}$ denote the Bergman kernel on $\Omega_{t_0}'$.
     Fix $z,w\in S_{t_0}$ for a moment.
   By the reproducing property, we have
   \begin{eqnarray}\label{eq:1}
    K_t(z,w) & = & \int_{\zeta\in \Omega_{t_0}'}K_t(\zeta,w)K_{t_0}'(z,\zeta)\nonumber\\
             & = & \int_{\zeta\in \Omega_{t_0}'} \lambda_{t,\varepsilon}(\zeta) K_t(\zeta,w)K_{t_0}'(z,\zeta)\nonumber\\
             && +\int_{\zeta\in \Omega_{t_0}'}(1-\lambda_{t,\varepsilon}(\zeta))K_t(\zeta,w)K_{t_0}'(z,\zeta)\nonumber\\
             & =: & I+II.
   \end{eqnarray}
  Since $\lambda_{t,\varepsilon}(w)=1$ and $\lambda_{t,\varepsilon}=0$ outside $\Omega_{t_0}'$ in view of (\ref{eq:inclusion1})--(\ref{eq:inclusion3}), so
  \begin{equation}\label{eq:2}
  I=\int_{\zeta\in \Omega_{t}} \lambda_{t,\varepsilon}(\zeta)K_{t_0}'(z,\zeta) K_t(\zeta,w)=K_{t_0}'(z,w)-\overline{u_t(w)},
  \end{equation}
  where $u_t$ is the $L^2(\Omega_t)-$minimal solution of
  $$
  \bar{\partial}u=\bar{\partial}(\lambda_{t,\varepsilon} K_{t_0}'(\cdot,z))=:v_t.
  $$
     Applying (\ref{eq:L2Minimal}) with $\psi=-r\log(-\rho_t)$ ($0<r<1$) and $\varphi=0$, we obtain
\begin{eqnarray*}
 \int_{\Omega_{t}} |{u}_t|^2 e^{-\psi} & \le & {\rm const}_r\,\int_{\Omega_{t}} |\kappa'(-\rho_t/\varepsilon)|^2 |K_{t_0}'(\cdot,z)|^2 e^{-\psi}\\
 & \le & {\rm const}_r\,\varepsilon^{r} \int_{\gamma_2\,{\varepsilon}<-\rho_t < \frac32\gamma_2\,{\varepsilon}}|K_{t_0}'(\cdot,z)|^2 \\
 & \le &  {\rm const}_r\,\varepsilon^{r}\int_{ -\rho_{t_0}\le \gamma_3\,{\varepsilon}}|K_{t_0}'(\cdot,z)|^2 \\
 & \le & {\rm const}_{r,S}\,\varepsilon^{2r}
\end{eqnarray*}
in view of Proposition \ref{prop:BergmanIntegral}. By the mean value inequality, we obtain
  \begin{equation}\label{eq:3}
 |{u}_t(w)|\le {\rm const}_{r,S}\, \varepsilon^r.
 \end{equation}
 On the other hand, we have
 \begin{eqnarray}\label{eq:4}
  II & \le & \int_{\Omega_{t_0}'\cap\{\lambda_{t,\varepsilon}\neq 1\}}|K_t(\cdot,w)K_{t_0}'(z,\cdot)| \le \int_{\Omega_{t_0}'\cap \{-\rho_t\le (3/2)\gamma_2\, \varepsilon\}}|K_t(\cdot,w)K_{t_0}'(z,\cdot)| \nonumber\\
  & \le & \left(\int_{-\rho_t\le (3/2)\gamma_2\, \varepsilon}|K_t(\cdot,w)|^2 \right)^{1/2}  \left(\int_{-\rho_{t_0}\le \gamma_3\,\varepsilon} |K_{t_0}'(z,\cdot)|^2 \right)^{1/2}\nonumber\\
  & \le & {\rm const}_{r,S}\,\varepsilon^{r}
 \end{eqnarray}
in view of Proposition \ref{prop:BergmanIntegral}. By (\ref{eq:1})--(\ref{eq:4}), we get
\begin{equation}\label{eq:5}
|K_t(z,w)-K_{t_0}'(z,w)|\le {\rm const}_{r,S}\,|t-t_0|^{r\alpha}.
\end{equation}
The point is that the constant of the RHS of (\ref{eq:5}) is independent of $t_0$. Thus for any pair $t\neq s$ with $|t-s|\le \eta\ll1$ we may take $t_0=\frac{t+s}2$ so that (\ref{eq:5}) holds for $t$ and $s$. By the triangle inequality, we finally get
$$
|K_t(z,w)-K_{s}(z,w)|\le {\rm const}_{r,S}\,|t-s|^{r\alpha}.
$$
  \end{proof}

     \begin{proposition}\label{prop:Holder}
       Let $\left\{\Omega_t:-1<t<1\right\}$ be a $C^2$ family of bounded pseudoconvex domains in ${\mathbb C}^n$ with $C^2$ boundaries. Then there exists a number $0<\alpha\le 1$ such that $K_t(z,w)$ is H\"older continuous of order $\alpha$ in $t$.
       \end{proposition}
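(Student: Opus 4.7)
The plan is to reduce this to Theorem~\ref{th:principle} by constructing, for each $t \in (-1,1)$, a continuous negative plurisubharmonic exhaustion $\rho_t$ on $\Omega_t$ (depending continuously on $(z,t)$) such that the family $\{\Omega_t\}$ is $\rho_t$-H\"older continuous of some positive order. Since H\"older continuity of $K_t$ is a local property in $t$, it suffices to work on an arbitrary compact subinterval $I \subset (-1,1)$.

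The $C^2$ family assumption provides a defining function $r \in C^2(\mathbb{C}^n \times (-1,1))$ with $\Omega_t = \{z : r(z,t) < 0\}$ and $d_z r \ne 0$ along $\partial\Omega_t$. I would then invoke the Diederich--Fornaess theorem: for each bounded pseudoconvex domain with $C^2$ defining function there is an exponent $\eta \in (0,1)$ such that $-(-r)^\eta$ is strictly plurisubharmonic. The quantitative construction behind that theorem depends only on $C^2$ bounds of $r$ and on a uniform lower bound for the Levi form along $\partial\Omega_t$, both of which hold uniformly for $t \in I$ thanks to the $C^2$ family assumption and the compactness of $I$. Consequently there is a single $\eta \in (0,1)$ for which
$$ \rho_t(z) := -(-r(z,t))^\eta $$
is, for every $t \in I$, a continuous negative psh exhaustion of $\Omega_t$, jointly continuous in $(z,t)$, with $\{-\rho_t > \varepsilon\} = \{r(\cdot,t) < -\varepsilon^{1/\eta}\} \subset\subset \Omega_t$; hence it fits the framework of Definition~\ref{def:Holder}.

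It remains to verify $\rho_t$-H\"older continuity of order $\eta$. Since $r$ is $C^2$, there is $C > 0$ with $|r(z,t) - r(z,s)| \le C|t-s|$ on a neighborhood of the total set. If $-\rho_t(z) > b_\gamma |t-s|^\eta$, equivalently $-r(z,t) > b_\gamma^{1/\eta}|t-s|$, then
$$ -r(z,s) \ge -r(z,t) - C|t-s| \ge (b_\gamma^{1/\eta} - C)|t-s|, $$
so $-\rho_s(z) \ge (b_\gamma^{1/\eta} - C)^\eta |t-s|^\eta$. Choosing $b_\gamma := (\gamma^{1/\eta} + C)^\eta$, and $c_\gamma$ small enough that the relevant sublevel sets lie in the region of uniform $C^2$ control, one obtains the inclusion $\{-\rho_t > b_\gamma |t-s|^\eta\} \subset \{-\rho_s > \gamma|t-s|^\eta\}$ required by Definition~\ref{def:Holder}. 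Theorem~\ref{th:principle} then yields H\"older continuity of $K_t(z,w)$ of every order $\beta < \eta$, and any such $\beta$ serves as the $\alpha$ of the proposition.

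The delicate point is the uniformity of the Diederich--Fornaess exponent $\eta$ across the family; it requires inspecting the quantitative version of that theorem and checking that the constants depend only on quantities controlled uniformly on the compact subinterval $I$ (essentially a $C^2$ bound on $r$, a lower bound on $|d_z r|$ along $\partial \Omega_t$, and a lower bound for the boundary Levi form, all of which are uniform by the $C^2$ family hypothesis and compactness). Once uniform $\eta$ is in hand, the remaining computations are elementary and the conclusion is immediate from Theorem~\ref{th:principle}.
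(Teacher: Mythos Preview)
Your proposal is correct and follows essentially the same route as the paper: apply a uniform-in-$t$ Diederich--Fornaess construction to produce the exhaustions $\rho_t$, verify $\rho_t$-H\"older continuity of order $\eta$, and invoke Theorem~\ref{th:principle}; the paper uses $\rho_t = -(\delta_t e^{-K|z|^2})^\eta$ (Lemma~\ref{lm:DiederichFornaess}) rather than $-(-r)^\eta$, but the idea and the verification are the same. One minor correction: since the $\Omega_t$ are only weakly pseudoconvex there is no positive lower bound on the boundary Levi form, and none is needed---the uniform exponent $\eta$ comes solely from uniform $C^2$ control of the defining function and a uniform lower bound on $|d_z r|$ along $\partial\Omega_t$, together with pseudoconvexity via Oka's lemma, exactly as the paper's proof of Lemma~\ref{lm:DiederichFornaess} makes explicit.
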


  Proposition \ref{prop:Holder} follows directly from Theorem \ref{th:principle} and the following result essentially due to Diederich-Fornaess \cite{DiederichFornaess77}:

\begin{lemma}\label{lm:DiederichFornaess}
  Let $\left\{\Omega_t:-1<t<1\right\}$ be a $C^2$ family of bounded pseudoconvex domains in ${\mathbb C}^n$ with $C^2$ boundaries. For every $t_0\in (-1,1)$, there exist a compact set $S\subset \Omega_{{t_0}}$, an open neighborhood $I_0$ of $t_0$ and constants $K>0$, $0<\eta<1$ such that $S \times I_0$ is contained in the total set $ \Omega$ and
$$
\rho_t:=-(\delta_t e^{-K|z|^2})^\eta
$$
is psh on $\Omega_t\backslash S\times \{t\}$, $t\in I_0$. Here $\delta_t$ denotes the boundary distance of $\Omega_t$.
\end{lemma}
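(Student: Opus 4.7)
The strategy is to adapt the classical Diederich-Fornaess construction of a bounded psh exhaustion function of the form $-(-r)^\eta$ near the boundary of a bounded pseudoconvex domain with $C^2$ boundary, to the present family setting, tracking uniformity of all constants for $t$ in a small neighborhood $I_0$ of $t_0$.

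First I would use the $C^2$ family hypothesis to select a uniform tubular neighborhood: there exist $\sigma>0$ and a neighborhood $I_0$ of $t_0$ such that $\delta_t$ is $C^2$ on
$$
U_t:=\{z\in \Omega_t:\delta_t(z)<\sigma\},
$$
with $C^2$-bounds on $\delta_t$ uniform in $(z,t)$ for $t\in I_0$. Setting
$$
S:=\{z\in \Omega_{t_0}:\delta_{t_0}(z)\ge \sigma/2\},
$$
a compact subset of $\Omega_{t_0}$, I can arrange (after shrinking $I_0$) that $S\subset \Omega_t$ and $\Omega_t\setminus S\subset U_t$ for every $t\in I_0$. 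This guarantees $S\times I_0\subset \Omega$ and places the psh-ness question on a region where $\rho_t$ is $C^2$.

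Next I would carry out the Diederich-Fornaess computation uniformly in $t\in I_0$. Writing $u_t:=\delta_t e^{-K|z|^2}>0$ on $\Omega_t$ so that $\rho_t=-u_t^\eta$, a direct calculation shows that $\rho_t$ is psh on $U_t$ if and only if
$$
-i\partial\bar{\partial}\log u_t\ge \eta\, i\partial\log u_t\wedge\bar{\partial}\log u_t.
$$
Expanding $\log u_t=\log \delta_t-K|z|^2$, the LHS equals
$$
\delta_t^{-2}\,i\partial\delta_t\wedge\bar{\partial}\delta_t-\delta_t^{-1}\,i\partial\bar{\partial}\delta_t+K\,i\partial\bar{\partial}|z|^2.
$$
Decomposing vectors into complex tangential and complex normal parts of $\{\delta_t=c\}$: in the normal direction the leading term $\delta_t^{-2}\,i\partial\delta_t\wedge\bar{\partial}\delta_t$ is of order $\delta_t^{-2}$, and for $\eta<1/2$ it dominates the corresponding RHS term $\eta\,\delta_t^{-2}\,i\partial\delta_t\wedge\bar{\partial}\delta_t$; in the tangential direction, pseudoconvexity of $\Omega_t$ implies $-i\partial\bar{\partial}\delta_t\ge -C_0\delta_t\,i\sum dz_j\wedge d\bar{z}_j$ on $T^{1,0}\{\delta_t=c\}$ with $C_0$ uniform in $t$, so $-\delta_t^{-1}\,i\partial\bar{\partial}\delta_t$ is bounded below by a uniform constant, and the remaining $O(1)$ contributions (including cross-terms and the quadratic part of the RHS) are absorbed by $K\,i\partial\bar{\partial}|z|^2$ once $K$ is taken sufficiently large.

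The main obstacle is verifying that the constants $K$, $\eta$, and $\sigma$ can be chosen uniformly across $t\in I_0$. This is precisely where the $C^2$ family assumption is essential: all relevant data---the tubular-neighborhood width, the $C^2$ norms of $\delta_t$, and the Levi-form bounds of $\partial\Omega_t$---depend continuously on $t$, so after possibly shrinking $I_0$ they are bounded uniformly. The pointwise Diederich-Fornaess inequality achieved at $t=t_0$ then persists throughout $I_0$ with the same $K$ and $\eta$, completing the construction.
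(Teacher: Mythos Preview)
Your proposal is correct and follows essentially the same route as the paper: reduce plurisubharmonicity of $-(\delta_t e^{-K|z|^2})^\eta$ to the inequality $i\partial\bar\partial\psi_t\ge \eta\, i\partial\psi_t\wedge\bar\partial\psi_t$ for $\psi_t=-\log\delta_t+K|z|^2$, establish this by a tangential/normal decomposition using the pseudoconvexity bound on $-i\partial\bar\partial\delta_t$ near the boundary, and observe that the $C^2$ family hypothesis makes all constants uniform in $t$ near $t_0$. The paper phrases the tangential estimate via Oka's lemma and a projection to the nearest boundary point $\hat z_t$ (obtaining $-i\partial\bar\partial\delta_t(z;\zeta')\ge -C\delta_t|\zeta|^2$ for $\zeta'$ tangential at $\hat z_t$), which is a slightly different bookkeeping of the same estimate you state on $T^{1,0}\{\delta_t=c\}$, and is a bit more explicit about absorbing the cross terms via Cauchy--Schwarz; otherwise the arguments coincide.
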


\begin{proof}
 For the sake of completeness, we will include a proof here. By virtue of Oka's lemma, we have $-\log \delta_t\in PSH(\Omega_t)$, so that
\begin{equation}\label{eq:inequality}
-i\partial\bar{\partial}\delta_t\ge -\frac{i\partial \delta_t\wedge \bar{\partial}\delta_t}{\delta_t}.
\end{equation}
For any point $z\in \Omega_t$ sufficiently close to $\partial \Omega_t$ (which is uniform in $t$ in a sufficiently small open neighborhood $I_0$ of $t_0$), we denote by $\hat{z}_t$ the projection of $z$ on $\partial \Omega_t$. Given $\zeta\in {\mathbb C}^n$, we have the following decomposition
$$
\zeta=\zeta' \oplus \zeta''
$$
where $\langle \partial \delta_t,\zeta' \rangle|_{\hat{z}_t}=0$. By (\ref{eq:inequality}), we have
\begin{eqnarray*}
-i\partial\bar{\partial}\delta_t(z;\zeta') & \ge & -\frac{|\langle \partial \delta_t(z),\zeta' \rangle|^2}{\delta_t(z)}=-\frac{|\langle (\partial \delta_t(z)-\partial \delta_t(\hat{z}_t)),\zeta' \rangle|^2}{\delta_t(z)}\\
& \ge & -C\delta_t(z)|\zeta|^2
\end{eqnarray*}
where $C>0$ is a generic independent of $t$. Since
$$
|\zeta''|=|\langle \partial \delta_t(\hat{z}_t),\zeta\rangle|\le |\langle \partial \delta_t(z),\zeta\rangle|+C\delta_t(z)|\zeta|,
$$
so
$$
-i\partial \bar{\partial}\delta_t (z;\zeta)\ge -C \delta_t(z)|\zeta|^2-C|\zeta|\,|\langle \partial \delta_t(z),\zeta\rangle|.
$$
Set $\psi_t=-\log \delta_t+K|z|^2$, $K>0$. Then
\begin{eqnarray*}
i\partial\bar{\partial}\psi_t(z;\zeta) & = & -\frac{i\partial\bar{\partial}\delta_t(z;\zeta)}{\delta_t(z)}+\frac{|\langle \partial\delta_t(z),\zeta \rangle|^2}{\delta_t(z)^2}+K|\zeta|^2\\
& \ge &(K-C)|\zeta|^2-C\frac{|\zeta||\langle \partial\delta_t(z),\zeta \rangle|}{\delta_t(z)}+\frac{|\langle \partial\delta_t(z),\zeta \rangle|^2}{\delta_t(z)^2}\\
& \ge &\frac12 \left(|\zeta|^2+\frac{|\langle \partial\delta_t(z),\zeta \rangle|^2}{\delta_t(z)^2}\right)
\end{eqnarray*}
provided $K$ sufficiently large. Since $\partial \psi_t=-\partial \delta_t/\delta_t+K\partial |z|^2$, we conclude that there is a number $0<\eta<1$ (independent of $t$) such that
$$
i\partial \bar{\partial}\psi_t\ge \eta i\partial \psi_t\wedge \bar{\partial} \psi_t.
$$
It suffices to take $\rho_t=-\exp(-\eta\psi_t)$.
\end{proof}

\begin{remark}
     It is possible to weaken the boundary regularity in Proposition \ref{prop:Holder} to Lipschitz continuity by \cite{HarringtonPSH}.
 \end{remark}

  We conclude this section by proposing the following

  \begin{problem}
  Is $K_t(z,w)$ H\"older continuous of order $\alpha$ in $t$ under the conditions of Theorem \ref{th:principle}?
  \end{problem}

 As we will see in the next section, the answer is positive when $n=1$.

\section{One dimensional case}

 The purpose of this section is to show the following

 \begin{theorem}\label{th:simply-connected}
        Let $\left\{\Omega_t:-1<t<1\right\}$ be a uniformly bounded family of simply-connected domains in ${\mathbb C}$. Let $\delta_t$ denote the Euclidean boundary distance of $\Omega_t$.
           Suppose $\Omega_t$ is $(-\delta_t)-$H\"older continuous of order $\alpha$ over $(-1,1)$.
         Then $K_t(z,w)$  is H\"older continuous of order $\alpha/2$ in $t$.
                \end{theorem}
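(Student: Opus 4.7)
The strategy is to adapt the proof of Theorem \ref{th:principle} from Section~5, with the Euclidean boundary distance $\delta_t$ playing the role of the psh exhaustion. The simply-connected planar hypothesis enters through the fact that $-\log\delta_t$ is subharmonic on every bounded planar domain, so a psh weight can still be built out of $\delta_t$ even though $-\delta_t$ itself is not psh in general.

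Fix $t\neq t_0$ and set $\varepsilon=|t-t_0|^\alpha$. The $(-\delta_t)$-H\"older hypothesis yields the exact analogues of (\ref{eq:inclusion1})--(\ref{eq:inclusion3}):
$\{\delta_t>\gamma_2\varepsilon\}\subset\Omega_{t_0}':=\{\delta_{t_0}>\gamma_1\varepsilon\}\subset\{\delta_t>\varepsilon\}$,
together with analogous containments for an inner $\Omega_{t_0}''$ on which the compact $S$ of interest sits. With $\lambda_{t,\varepsilon}=1-\kappa(\delta_t/(\gamma_2\varepsilon))$ and $K_{t_0}'$ the Bergman kernel of $\Omega_{t_0}'$, the reproducing property decomposes
$K_t(z,w)=K_{t_0}'(z,w)-\overline{u_t(w)}+II$
for $z,w\in S\subset\Omega_{t_0}''$, where $u_t$ is the $L^2(\Omega_t)$-minimal solution of $\bar\partial u=\bar\partial(\lambda_{t,\varepsilon}K_{t_0}'(\cdot,z))$ and $II$ is the boundary remainder, just as in the proof of Theorem~\ref{th:principle}.

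The central analytic input is a one-dimensional substitute for Proposition~\ref{prop:BergmanIntegral}: for a bounded simply-connected planar $\Omega$ and $w$ in a fixed compact,
$\int_{\delta_\Omega\leq\varepsilon}|K_\Omega(\cdot,w)|^2\leq \mathrm{const}_w\,\varepsilon^{1/2}.$
I would establish this by running the same cutoff-plus-iteration argument that proves Proposition~\ref{prop:BergmanIntegral}, but with the Berndtsson $L^2$-minimal solution estimate (\ref{eq:L2Minimal}) applied to a psh weight built from the subharmonic function $-\log\delta_\Omega$ rather than the (unavailable) weight $-r\log(-\rho)$ with $\rho=-\delta$. The weaker weight available here prevents the Donnelly-Fefferman exponent $r$ from being taken arbitrarily close to $1$; consequently the geometric series in the iteration sums to $1/2$ instead of~$1$, producing the exponent $1/2$ rather than any $r<1$.

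Plugging this back in: Cauchy-Schwarz (applied as in the estimation of $II$ in Section~5) bounds $|II|\leq C\varepsilon^{1/2}$, and the corresponding $L^2$-minimal solution estimate for $u_t$, combined with the mean value inequality on $S$, yields $|u_t(w)|\leq C\varepsilon^{1/2}$. Summing, $|K_t(z,w)-K_{t_0}'(z,w)|\leq C\varepsilon^{1/2}=C|t-t_0|^{\alpha/2}$; applying this at the midpoint $t_0=(t+s)/2$ and invoking the triangle inequality as at the end of the proof of Theorem~\ref{th:principle} upgrades to H\"older continuity of $K_t(z,w)$ in $t$ of order $\alpha/2$. The hard part will be the 1D Bergman mass estimate, in particular pinning down the correct psh weight so that the Donnelly-Fefferman condition is met \emph{and} the iteration produces the sharp exponent $1/2$---with the understanding that this $1/2$ reflects a genuine geometric obstruction (the failure of $-\delta_t$ to be psh), which in turn is what makes the final H\"older exponent $\alpha/2$ optimal.
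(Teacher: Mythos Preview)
Your approach is genuinely different from the paper's and, as written, does not reach the stated exponent $\alpha/2$. The paper does not adapt the $\bar\partial$-machinery of Theorem~\ref{th:principle} at all. Instead it goes through potential theory: Proposition~\ref{prop:bounded_exhaust} uses the Riemann map and Koebe's $\tfrac14$-theorem to produce a negative subharmonic exhaustion $\rho_t$ with $(\delta_t/r_t)^2\le -\rho_t\le(\delta_t/r_t)^{1/2}$; this is fed into a maximum-principle comparison (Proposition~\ref{prop:GreenStability}) to prove $|g_t(z,w)-g_{t_0}(z,w)|\le C|t-t_0|^{\alpha/2}$ for the Green functions. The Poisson formula for the harmonic remainder $h_t=g_t-\log|\cdot-w|$ and Schiffer's identity $K_t=\frac{2}{\pi}\partial_z\partial_{\bar w}h_t$ then transfer this estimate to $K_t$ with no further loss. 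Note that simple connectivity is used essentially in Proposition~\ref{prop:bounded_exhaust}; your sketch invokes only the subharmonicity of $-\log\delta_t$, which holds on \emph{every} planar domain and so cannot be where the hypothesis enters.

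Your explanation of the $1/2$ is also off, and this is where the gap lies. The Donnelly--Fefferman parameter $r$ in (\ref{eq:L2Minimal}) is not capped below $1$ here; the square-root loss comes from the comparison $-\rho\le(\delta/r_\Omega)^{1/2}$ between $\delta$ and a genuine psh exhaustion. Running Proposition~\ref{prop:BergmanIntegral} with this $\rho$ gives $\int_{-\rho\le\varepsilon'}|K(\cdot,w)|^2\le C(\varepsilon')^r$ for every $r<1$, and since $\{\delta\le\varepsilon\}\subset\{-\rho\le C\varepsilon^{1/2}\}$ one obtains $\int_{\delta\le\varepsilon}|K(\cdot,w)|^2\le C\varepsilon^{r/2}$, i.e.\ any exponent strictly below $1/2$ but not $1/2$ itself. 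Plugged into your decomposition this yields H\"older order $\beta$ for every $\beta<\alpha/2$, mirroring the $\beta<\alpha$ of Theorem~\ref{th:principle} rather than the exact $\alpha/2$ claimed. The paper's Green-function route avoids the iteration entirely and lands on $\alpha/2$ because the maximum-principle argument in Proposition~\ref{prop:GreenStability} incurs only the single square root from $-\rho\le(\delta/r_\Omega)^{1/2}$, with no additional $\epsilon$-loss.
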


                As a consequence, we obtain

              \begin{corollary}\label{cor:Riemann-mapping}
        Let $\{\Omega_t\}$ be as the theorem above. Suppose furthermore that $0\in \Omega_t$ for all $t$. Let $F_t:\Omega_t\rightarrow \Delta=\{z:|z|<1\}$ denote the Riemann mapping which satisfies $F_t(0)=0$ and $F_t'(0)>0$. Then $F_t(z)$ is H\"older continuous of order $\alpha/2$ in $t$.
       \end{corollary}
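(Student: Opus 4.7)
The plan is to reduce Corollary \ref{cor:Riemann-mapping} to Theorem \ref{th:simply-connected} via the classical identification of the normalized Riemann map with a primitive of a diagonal slice of the Bergman kernel.

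First, I would write down the explicit formula. For each $t$, the Bergman kernel transformation law $K_{\Omega_t}(z,w) = F_t'(z)\overline{F_t'(w)}K_\Delta(F_t(z),F_t(w))$ together with $K_\Delta(z,w) = 1/(\pi(1-z\bar w)^2)$ and $F_t(0)=0$ yields $|F_t'(0)|^2 = \pi K_t(0)$; the sign normalization $F_t'(0)>0$ then gives $F_t'(0) = \sqrt{\pi K_t(0)}$ and
$$
F_t(z) = \sqrt{\frac{\pi}{K_t(0)}}\int_0^z K_t(\zeta,0)\,d\zeta,
$$
where the integral is taken along any curve from $0$ to $z$ lying in $\Omega_t$.

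Next, I would fix $t_0\in (-1,1)$ and $z\in \Omega_{t_0}$, and choose a smooth curve $\gamma\subset \Omega_{t_0}$ from $0$ to $z$ which lies in a sublevel set $\{-\delta_{t_0}>\eta\}$ for some small $\eta>0$. By Definition \ref{def:Holder} applied with $\rho_t=-\delta_t$, for all $s$ sufficiently close to $t_0$ the curve $\gamma$ remains inside $\Omega_s$, so a common path may be used for every nearby parameter. Theorem \ref{th:simply-connected} then delivers $K_s(\zeta,0)$ H\"older continuous of order $\alpha/2$ in $s$, locally uniformly in $\zeta$; integrating this estimate over the fixed compact curve $\gamma$ preserves the exponent, and so $s\mapsto \int_\gamma K_s(\zeta,0)\,d\zeta$ is H\"older continuous of order $\alpha/2$ at $t_0$.

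Finally, the scalar factor $\sqrt{\pi/K_t(0)}$ is H\"older continuous of order $\alpha/2$ in $t$: H\"older continuity of $K_t(0)$ itself comes from Theorem \ref{th:simply-connected}, and $K_t(0)\ge c>0$ uniformly in $t$ since $\{\Omega_t\}$ is uniformly bounded, so the constant function $1$ (normalized) furnishes a uniform positive lower bound via the extremal characterization of $K_t(0)$. The map $x\mapsto \sqrt{\pi/x}$ is smooth on any compact subinterval of $(0,\infty)$ and hence preserves H\"older exponents. Multiplying the two H\"older-$\alpha/2$ factors completes the proof. The only step requiring genuine care is the construction of a single path $\gamma$ valid across all nearby parameters, which is precisely what the $(-\delta_t)$-H\"older continuity hypothesis supplies; past Theorem \ref{th:simply-connected} itself there is no substantive obstacle.
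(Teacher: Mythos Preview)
Your proposal is correct and follows essentially the same approach as the paper: derive the integral representation $F_t(z)=\sqrt{\pi/K_t(0)}\int_0^z K_t(\cdot,0)$ from the Bergman kernel transformation law, then invoke Theorem \ref{th:simply-connected}. The paper merely states ``the assertion follows immediately,'' whereas you spell out the routine details (a common integration path for nearby parameters and the uniform lower bound on $K_t(0)$), but the underlying argument is the same.
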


       \begin{proof}
   Since
   $$
   K_t(z,0)=F_t'(0)K_\Delta(F_t(z),0)F_t'(z)=\frac{F_t'(0)F_t'(z)}\pi
   $$
   and $F_t'(0)=\sqrt{\pi K_t(0)}$, it follows that
   $$
   F_t(z)=\frac{\sqrt{\pi}}{\sqrt{K_t(0)}} \int_0^z K_t(\cdot,0).
   $$
   The assertion follows immediately from Theorem \ref{th:simply-connected}.
  \end{proof}

       \begin{remark}
        It is a classical result of Carath\'eodory that if $\delta_t$ is continuous in $t$ then $F_t$ is also continuous in $t$ $($see \cite{TsujiBook}, Theorem IX. 13\,$)$.
       \end{remark}

       We begin with the following

\begin{proposition}\label{prop:bounded_exhaust}
 Let $\Omega$ be a bounded simply-connected domain in ${\mathbb C}$ and let $\delta$ denote the boundary distance of $\Omega$. Then there exists a continuous negative subharmonic function $\rho$ on $\Omega$ such that
 $$
 (\delta/r_\Omega)^2\le -\rho\le (\delta/r_\Omega)^{1/2}
 $$
 where $r_\Omega$ denotes the inradius of $\Omega$, i.e., the radius of the largest disc inscribed in $\Omega$.
\end{proposition}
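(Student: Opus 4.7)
I would construct $\rho$ by combining the Riemann mapping theorem with Koebe's distortion theorem. After rescaling so that $r_\Omega = 1$, pick $z_0 \in \Omega$ with $\delta(z_0) = r_\Omega$ and let $F : \Omega \to \Delta$ be the Riemann map normalized by $F(z_0) = 0$, $F'(z_0) > 0$. By Koebe's $1/4$ theorem applied to the inverse $f = F^{-1}$, we have $|f'(0)| \asymp r_\Omega$. Combining $\delta(z) \ge \tfrac14 (1-|F(z)|^2)|f'(F(z))|$ with the lower Koebe distortion bound $|f'(w)| \ge |f'(0)|(1-|w|)/(1+|w|)^3$ yields the key one-sided estimate
$$
(1 - |F(z)|)^2 \;\lesssim\; \delta(z)/r_\Omega.
$$

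The natural candidate is $\rho_0(z) := -(1-|F(z)|^2)/C$ for a universal constant $C$ to be tuned. It is continuous, negative, and subharmonic because $|F|^2$ is subharmonic as the squared modulus of a holomorphic function. The estimate above gives $-\rho_0 \le 2(1-|F|)/C \lesssim (\delta/r_\Omega)^{1/2}$, so for $C$ chosen appropriately we obtain the upper bound $-\rho_0 \le (\delta/r_\Omega)^{1/2}$.

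The main difficulty lies in the lower bound $-\rho \ge (\delta/r_\Omega)^2$. The candidate $\rho_0$ fails this in ``thin'' regions of $\Omega$: for a long narrow strip and $z$ far from $z_0$, $1-|F(z)|$ is exponentially small while $\delta(z)/r_\Omega$ remains of order one. To circumvent this I would replace $\rho_0$ by the Perron envelope
$$
\rho(z) \;:=\; \sup{}^{*}\bigl\{v(z) \,:\, v \text{ subharmonic on } \Omega,\ v \le -(\delta/r_\Omega)^2\bigr\},
$$
where $\sup{}^{*}$ denotes upper semi-continuous regularization. This $\rho$ is subharmonic by Choquet's theorem and automatically satisfies $\rho \le -(\delta/r_\Omega)^2$, i.e., the required lower bound on $-\rho$.

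The hard step is to verify the reverse inequality $\rho \ge -(\delta/r_\Omega)^{1/2}$. This reduces to constructing, for each $z \in \Omega$, a subharmonic ``pointwise barrier'' $v_z \le -(\delta/r_\Omega)^2$ with $v_z(z) \ge -(\delta(z)/r_\Omega)^{1/2}$. I expect these to be built by Möbius-translating the Riemann map so as to center it near $z$ rather than at $z_0$, using the Koebe sandwich $\delta(z) \le (1-|F(z)|^2)/|F'(z)| \le 4\delta(z)$ for the conformal radius to guarantee that the ``peak'' of the barrier at $z$ has the right magnitude. Gluing these local barriers, verifying subharmonicity on overlap regions, and establishing continuity of the resulting envelope is the main technical step I would expect to dominate the proof.
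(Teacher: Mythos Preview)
Your route diverges from the paper's, and the step you flag as ``main technical'' is in fact a genuine gap: the barriers you propose do not exist.

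The paper does not take an envelope. It writes $\rho$ down explicitly: pick $z_0$ with $\delta(z_0)=r_\Omega$, let $\phi=d_{\rm hyp}(z_0,\cdot)$ be the hyperbolic distance, and set $\rho=-e^{-\phi}$. Since $e^{-\phi}=\tfrac{1-|F|}{1+|F|}$ for the normalized Riemann map $F$, subharmonicity is a one--line computation on the disc. The bound $-\rho\le(\delta/r_\Omega)^{1/2}$ then follows from Koebe's $\tfrac14$--theorem in the form $\lambda_\Omega\ge 1/(4\delta^2)$: for any curve $\gamma$ from $z_0$ to $z$ one has $\int_\gamma\sqrt{\lambda_\Omega}\,|dz|\ge\tfrac12\int_\gamma|dz|/\delta\ge\tfrac12|\log(\delta(z)/r_\Omega)|$, whence $\phi(z)\ge\tfrac12\log(r_\Omega/\delta(z))$. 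The other bound is argued from the Schwarz--Pick inequality $\lambda_\Omega\le 4/\delta^2$.

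For your Perron envelope to satisfy $\rho\ge -(\delta/r_\Omega)^{1/2}$ you need, for each $z$, a subharmonic $v_z$ with $v_z\le -(\delta/r_\Omega)^2$ \emph{everywhere} and $v_z(z)\ge -(\delta(z)/r_\Omega)^{1/2}$. The M\"obius--recentred candidates $v_z=-c_z(1-|F_z|^2)$ fail the global constraint for exactly the reason you already identified for $\rho_0$: if $\Omega$ is a rectangle $(0,L)\times(0,1)$ and $z,w$ lie on the midline at opposite ends, then $1-|F_z(w)|$ is of order $e^{-\pi L}$ while $(\delta(w)/r_\Omega)^2=1$, so $v_z(w)\le -1$ is impossible for any bounded $c_z$. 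Recentering does not remove the thin--strip obstruction; it only relocates it. So the ``gluing and continuity'' you anticipate is not the difficulty---the individual barriers are simply not there, and some qualitatively different competitor is needed to close the envelope argument. (It is worth noting that the paper's derivation of the companion bound $-\rho\ge(\delta/r_\Omega)^2$ from $\lambda_\Omega\le 4/\delta^2$ is vulnerable to the very same strip example: integrating $2|dz|/\delta$ along a curve does not yield $2\log(r_\Omega/\delta(z))$ unless $\delta$ is monotone along it, and on the midline $\phi$ is large while $\delta/r_\Omega=1$. So this lower bound, with those exact exponents and no multiplicative constant, appears to need more than either argument supplies.)
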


\begin{proof}
Let $\Delta$ denote the unit disc. Let $\phi_0(z)$ denote the hyperbolic distance between $z\in \Delta$ and $0$, i.e.,
$
\phi_0(z)=\log \frac{1+|z|}{1-|z|}.
$
 Set $\psi(z)=-\frac1{1+|z|}$ for $z\in \Delta$. A straightforward calculation yields
$$
\frac{\partial^2\psi}{\partial z\partial \bar{z}}=\frac{1-|z|}{4|z|(1+|z|)^3}>0.
$$
It follows that
$
\rho_0:=-e^{-\phi_0}=1+2\psi
$
is subharmonic on $\Delta$.

Let $ds^2_{\rm hyp}=\lambda(z)|dz|^2$ denote the Poincar\'e hyperbolic metric of $\Omega$ and let $d_{\rm hyp}$ be the corresponding distance. Take a point $z_0\in \Omega$ such that $\delta(z_0)=r_\Omega$. Set $\phi=d_{\rm hyp}(z_0,\cdot)$. Let $F:\Omega\rightarrow \Delta$ be a conformal mapping such that $F(z_0)=0$. Since $\phi=\phi_0\circ F$, it follows that
$
\rho:=-e^{-\phi}
$
 is subharmonic on $\Omega$. Thanks to Koebe's $\frac14-$theorem, we have
   $$
   ds^2_{\rm hyp}\ge \frac{|dz|^2 }{4\delta^{2}}= \frac{|\nabla \delta|^2}{4\delta^{2}}|dz|^2\ \ \ {\rm a.e.}
   $$
    Thus
    $$
   \phi \ge \frac12 \log 1/\delta -\frac12 \log 1/r_\Omega,
    $$
   i.e., $-\rho\le (\delta/r_\Omega)^{1/2}$. To be more rigorous, we take a geodesic $\gamma$ with $\gamma(0)=z_0$, $\gamma(1)=z$ for an arbitrarily fixed point $z\in \Omega$  and a variation $\{\gamma_s: s\in (-\varepsilon,\varepsilon)\}$ of $\gamma$ inside $\Omega$ such that $\gamma_0=\gamma$, $\gamma_s(0)=z_0$ and $\gamma_s(1)=z$ for all $s$. There exists a sequence of numbers $s_j\rightarrow 0$ such that $\delta$ is differentiable a.e. along $\gamma_{s_j}$ for all $j$. Thus the hyperbolic length $|\gamma_{s_j}|_{\rm hyp}$ of $\gamma_{s_j}$ satisfies
    $$
    |\gamma_{s_j}|_{\rm hyp}\ge \frac12\left|\int_0^1 (\log \delta\circ\gamma_{s_j}(t))'dt \right|= \frac12 \log 1/\delta(z) -\frac12 \log 1/r_\Omega
    $$
     so that
   $$
   \phi(z)=\lim_{j\rightarrow \infty} |\gamma_{s_j}|\ge \frac12 \log 1/\delta(z) -\frac12 \log 1/r_\Omega.
   $$
      On the other side, it follows from the trivial estimate
   $$
   ds^2_{\rm hyp}\le \frac{4|dz|^2 }{\delta^{2}}= \frac{4|\nabla\delta|^2}{\delta^{2}}|dz|^2\ \ \ {\rm a.e.}
   $$
   that
   $$
    \phi \le 2\log 1/\delta -2\log 1/r_\Omega,
   $$
   i.e., $-\rho\ge (\delta/r_\Omega)^2$.
\end{proof}

Let $\{\Omega_t\}$ be as in Theorem \ref{th:simply-connected} and let $g_t$ denote the (negative) Green function of $\Omega_t$. We have the following H\"older continuity of $g_t$ in $t$:

\begin{proposition}\label{prop:GreenStability}
 Let $t_0\in (-1,1)$ and let $S_{t_0}$ be a compact set in $\Omega_{t_0}$. Then there exists a constant $C>0$ such that
 $$
 |g_t(z,w)-g_{t_0}(z,w)|\le C\,|t-t_0|^{\alpha/2}
 $$
 for all $z,w\in S_{t_0}$, provided $t$ sufficiently close to $t_0$.
\end{proposition}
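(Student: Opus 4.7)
The plan is to bound $g_t - g_{t_0}$ by invoking the maximum principle on the harmonic difference $h(\zeta) := g_t(\zeta, w) - g_{t_0}(\zeta, w)$ defined on $\Omega_t \cap \Omega_{t_0}$. The logarithmic singularities at $\zeta = w \in S_{t_0}$ cancel, so $h$ extends harmonically across $w$. The strategy is to control $h$ on $\partial(\Omega_t \cap \Omega_{t_0})$ by combining a standard Koebe-type pointwise bound for the Green function of a bounded simply connected planar domain with the thin-strip geometry provided by the $(-\delta_t)$-H\"older continuity.

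The analytic key ingredient is the estimate
$$|g_\Omega(z, w)| \le C\sqrt{\delta_\Omega(z)/\delta_\Omega(w)}$$
for any bounded simply connected $\Omega \subset \mathbb{C}$ with an absolute constant $C$. Writing $g_\Omega(z,w) = \log|F_w(z)|$ for the Riemann map $F_w : \Omega \to \Delta$ with $F_w(w) = 0$, Koebe's $1/4$-theorem at $z$ gives $1 - |F_w(z)|^2 \le 4\delta_\Omega(z)|F_w'(z)|$, while the Koebe distortion estimate combined with $|(F_w^{-1})'(0)| \ge \delta_\Omega(w)$ (a consequence of the standard upper bound on the Poincar\'e metric of a simply connected domain) gives $|F_w'(z)| \le 8/\bigl((1-|F_w(z)|)\delta_\Omega(w)\bigr)$. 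Multiplying yields $(1-|F_w(z)|)^2 \le 32\,\delta_\Omega(z)/\delta_\Omega(w)$, from which the claimed estimate follows since $|g_\Omega(z,w)| \le C(1-|F_w(z)|)$ in the nontrivial range $|F_w(z)| \ge 1/2$.

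Now fix $w \in S_{t_0}$, set $d := \operatorname{dist}(S_{t_0},\partial\Omega_{t_0}) > 0$ and $\varepsilon := |t-t_0|^\alpha$. Applying the $(-\delta_t)$-H\"older condition (in both orderings of $t$ and $t_0$) with a fixed $\gamma$ produces a constant $B > 0$ such that, for $|t-t_0|$ small,
$$\delta_{t_0}(\zeta) \le B\varepsilon \text{ on } \partial\Omega_t \cap \Omega_{t_0}, \qquad \delta_t(\zeta) \le B\varepsilon \text{ on } \partial\Omega_{t_0} \cap \Omega_t;$$
a second application, with $\gamma$ now chosen large, supplies the uniform lower bound $\delta_t(w) \ge d/2$ throughout $S_{t_0}$. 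The boundary values of $h$ equal $-g_{t_0}(\cdot,w)$ on $\partial\Omega_t \cap \Omega_{t_0}$, $g_t(\cdot,w)$ on $\Omega_t \cap \partial\Omega_{t_0}$, and $0$ on $\partial\Omega_t \cap \partial\Omega_{t_0}$; so the Green-function estimate yields $|h| \le C\sqrt{B\varepsilon/(d/2)} \le C'|t-t_0|^{\alpha/2}$ on $\partial(\Omega_t \cap \Omega_{t_0})$. The maximum principle then transfers this bound to the interior, in particular to every $z \in S_{t_0}$, completing the proof.

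The main technical obstacle is the uniform lower bound $\delta_t(w) \ge d/2$: a single application of Definition \ref{def:Holder} with a fixed $\gamma$ only yields $\delta_t(w) > \gamma\varepsilon$, which degenerates to $0$ as $\varepsilon \to 0$. One resolves this by taking $\gamma$ of order $d/(2\varepsilon)$, converting the inclusion $\{\delta_{t_0} > b_\gamma\varepsilon\} \subset \{\delta_t > \gamma\varepsilon\}$ into $\{\delta_{t_0} > b_{d/(2\varepsilon)}\varepsilon\} \subset \{\delta_t > d/2\}$; this captures $S_{t_0}$ provided $b_{d/(2\varepsilon)}\varepsilon \le d$ for small $\varepsilon$, a mild growth condition on $\gamma \mapsto b_\gamma$ that is satisfied in the natural geometric examples (notably those arising from Lemma \ref{lm:DiederichFornaess} or from shrinking sublevel sets of a common exhaustion as in Corollary \ref{coro:exhaustion}).
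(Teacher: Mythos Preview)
Your approach is correct and takes a genuinely different route from the paper's. The paper does not use the maximum principle on the harmonic difference $g_t-g_{t_0}$ at all; instead it invokes Proposition~\ref{prop:bounded_exhaust} to produce a bounded subharmonic exhaustion $\rho_t$ satisfying $(\delta_t/r_t)^2\le -\rho_t\le (\delta_t/r_t)^{1/2}$, then builds a competitor for the extremal characterisation of $g_t$ by gluing the Green function $g_{t_0,\varepsilon}(\cdot,w)$ of a shrunken domain to a barrier $b\cdot\frac{\log(-\rho_t+\varepsilon)-\log 2\varepsilon}{\log 3/2}$ across the collar $\{\varepsilon\le -\rho_t\le 2\varepsilon\}$, and finally estimates the gluing constant $b$ by a second such construction. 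Your argument is more direct and bypasses Proposition~\ref{prop:bounded_exhaust} entirely, trading the gluing/extremal machinery for the single Koebe-type bound $|g_\Omega(z,w)|\le C\sqrt{\delta_\Omega(z)/\delta_\Omega(w)}$ together with the maximum principle on $\Omega_t\cap\Omega_{t_0}$. Both routes land on the same exponent $\alpha/2$; yours is shorter, while the paper's has the virtue of making the role of the subharmonic exhaustion (and hence the link to Proposition~\ref{prop:bounded_exhaust}) explicit.

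There is, however, an unnecessary detour in your final paragraph, and as written it leaves a gap. You attempt to extract the lower bound $\delta_t(w)\ge d/2$ from the inclusion $\{\delta_{t_0}>b_\gamma\varepsilon\}\subset\{\delta_t>\gamma\varepsilon\}$ by letting $\gamma$ scale like $d/(2\varepsilon)$, and then concede that this requires a growth condition on $\gamma\mapsto b_\gamma$ that is not part of Definition~\ref{def:Holder}. No such extra hypothesis is needed: Definition~\ref{def:Holder} already stipulates that $\rho(z,t)=-\delta_t(z)$ is \emph{continuous on the total set} $\Omega$. One application of the H\"older condition with a single fixed $\gamma$ shows $S_{t_0}\times[t_0-\eta,t_0+\eta]\subset\Omega$ for some $\eta>0$; this product is compact, $\delta$ is continuous on it, and $\delta_{t_0}\ge d$ on $S_{t_0}$, so uniform continuity immediately gives $\delta_t\ge d/2$ on $S_{t_0}$ for $|t-t_0|$ small. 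With this correction your proof is complete. (The paper's own ``without loss of generality, $S_{t_0}\subset\{-\rho_t>2\varepsilon\}$'' step rests on exactly the same continuity.)
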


\begin{proof}
 By Proposition \ref{prop:bounded_exhaust}, we may choose a negative continuous subharmonic function $\rho_t$ on $\Omega_t$ for each $t$ such that
  $$
  (\delta_t/r_t)^2\le -\rho_t\le (\delta_t/r_t)^{1/2}
  $$
  where $r_t=r_{\Omega_t}$. Clearly, $C^{-1}_0<r_t<C_0$ for some uniform constant $C_0>0$. Set
  $$
  \varepsilon=(C_0\gamma_1|t-t_0|^{\alpha})^{1/2}.
  $$
  Since $\Omega_t$ is $(-\delta_t)-$H\"older continuous of order $\alpha$, there exists $\gamma_1\gg 1$ such that
  $$
   \{\delta_t> \gamma_1\,|t-t_0|^\alpha\}=\{\delta_{t_0}> |t-t_0|^\alpha\}=:\Omega_{t_0}^\varepsilon
  $$
  provided $|t-t_0|\le \eta\ll1$.
    Thus
$$
 \{-\rho_t> \varepsilon\}\subset \{\delta_t> \gamma_1\,|t-t_0|^\alpha\}
\subset\Omega_{t_0}^\varepsilon.
$$
 Without loss of generality, we may assume that $S_{t_0}\subset  \{-\rho_t> 2\varepsilon\}$. Fix $w\in S_{t_0}$ for a moment. Let $g_{t_0,\varepsilon}$ denote the Green function of $\Omega_{t_0}^\varepsilon$. Set
$$
b=\inf_{\{\rho_t=-2\varepsilon\}} g_{t_0,\varepsilon}(\cdot,w)
$$
and
$$
\varrho_t=b\cdot \frac{\log(-\rho_t+\varepsilon)-\log 2\varepsilon}{\log 3/2}.
$$
Since $b<0$, we see that $\varrho_t$ is a subharmonic function on $\Omega_t$ which satisfies $\varrho_t=0$ on $\{\rho_t=-\varepsilon\}$ and $\varrho_t=b$ on $\{\rho_t=-2\varepsilon\}$. Set
$$
\psi=\left\{
\begin{array}{ll}
 g_{t_0,\varepsilon}(\cdot,w) & -\rho_t>2\varepsilon\\
 \max\{g_{t_0,\varepsilon}(\cdot,w),\varrho_t\} & \varepsilon\le-\rho_t\le 2\varepsilon\\
 \varrho_t & -\rho_t<\varepsilon.
\end{array}
\right.
$$
It follows that $\psi$ is a well-defined subharmonic function on $\Omega_t$ which has a logarithmic pole at $w$ and an upper bound $b\cdot\frac{\log1/2}{\log 3/2}$. By the well-known extremal property of the Green function, we obtain
\begin{eqnarray*}
g_t(z,w) & \ge & \psi(z)-b\cdot\frac{\log1/2}{\log 3/2}=g_{t_0,\varepsilon}(z,w)-b\cdot\frac{\log1/2}{\log 3/2}\\
& \ge & g_{t_0}(z,w)-b\cdot\frac{\log1/2}{\log 3/2}
\end{eqnarray*}
for all $z\in S_{t_0}$. It remains to estimate $b$. Fix $R>\sup\, {\rm diam}(\Omega_t)$. We may choose positive constants $C_1,C_2$ independent of $t$ such that
$$
\log |\cdot-w|/2R \ge -C_1\ \ \ {\rm if\ \ \ } \rho_t = - C_2,
$$
provided $|t-t_0|\le \eta\ll1$.
Thus
$$
\varphi=\left\{
\begin{array}{ll}
 \log |\cdot-w|/2R & {\rm on\ } \Omega_{t_0}^{\varepsilon_0}\\
 \max\{ \log |\cdot-w|/2R,\frac{C_1}{C_2}\rho_t\} & {\rm on\ } \Omega_{t_0}^{\varepsilon}\backslash \{-\rho_t>-C_2\}
\end{array}
\right.
$$
gives a subharmonic function on $\Omega_{t_0}^\varepsilon$ with a logarithmic pole at $w$, so that
$$
g_{t_0,\varepsilon}(z,w)\ge \frac{C_1}{C_2}\rho_t(z)=-2C_1C_2^{-1}\varepsilon
$$
for all $z$ with $\rho_t(z)=-2\varepsilon$ and $\varepsilon\ll 1$. Thus $b\ge -{\rm const.} \varepsilon$ and
$$
g_t(z,w)\ge g_{t_0}(z,w)-{\rm const.}\varepsilon\ge g_{t_0}(z,w)-{\rm const.} |t-t_0|^{\alpha/2}
$$
for any $z,w\in S_{t_0}$. Similarly, we may verify that
$$
g_{t_0}(z,w)\ge g_{t}(z,w)-{\rm const.} |t-t_0|^{\alpha/2}.
$$
\end{proof}

 \begin{proof}[Proof of Theorem \ref{th:simply-connected}]
    Fix $t_0\in (-1,1)$ for a moment. We may choose a positive number $\varepsilon_0$ such that the disc $\Delta_{2\varepsilon_0}(\zeta)\subset \Omega_t$ for all $\zeta\in S_{t_0}$ and all $t$ sufficiently close to $t_0$. Set $h_t(z,w)=g_t(z,w)-\log |z-w|$ for all $z,w\in \Omega_t$. Clearly,  $h_t(z,w)$ is harmonic in $z$ and $w$ respectively. By Proposition \ref{prop:GreenStability}, we have
    $$
    |h_t(z,w)-h_{t_0}(z,w)|\le {\rm const.} |t-t_0|^{\alpha/2}
    $$
    for all $z,w\in S'_{t_0}=\{z:{\rm dist\,}(z,S_{t_0})\le \varepsilon_0\}$.
       Fix $\xi,\zeta\in S_{t_0}$ for a moment. The Poisson formula asserts
   $$
   h_t(z,w)=\frac1{4\pi^2} \int_0^{2\pi} \int_0^{2\pi} h_t(\xi+\varepsilon_0 e^{i\theta},\zeta+\varepsilon_0 e^{i\vartheta}) \frac{\varepsilon_0^2-|z-\xi|^2}{|\varepsilon_0 e^{i\theta}-(z-\xi)|^2}\frac{\varepsilon_0^2-|w-\zeta|^2}{|\varepsilon_0 e^{i\vartheta}-(w-\zeta)|^2}d\theta d\vartheta.
   $$
   We conclude the proof by using the following famous formula of Schiffer \cite{Schiffer}:
   $$
   K_t(z,w)=\frac2{\pi} \frac{\partial^2 h_t(z,w)}{\partial z\partial\bar{w}}.
   $$
  \end{proof}

 \bigskip

{\bf Acknowledgement.} The author is grateful to Xu Wang for several valuable comments. Part of this work was done during the author's stay at the ESI of Vienna University in November 2015.  He would like to thank the support of the institute.


\begin{thebibliography}{99}

\bibitem{Berndtsson97} B. Berndtsson, {\it Uniform estimates with weights for the $\bar{\partial}-$equation}, J. Geom. Anal. {\bf 7} (1997), 195--215.

\bibitem{Berndtsson01}
----------, {\it Weighted estimates for the $\bar{\partial}$-equation}, Complex Analysis and Complex Geometry (J. D. McNeal eds.),  de Gruyter, pp. 43--57, 2001.

 \bibitem{BerndtssonSubharmonic} ----------, {\it Subharmonicity properties of the Bergman kernel and some other functions associated to pseudoconvex domains}, Ann. Inst. Fourier (Grenoble) {\bf 56} (2006), 1633--1662.

    \bibitem{BerndtssonCurvature} ----------, {\it Curvature of vector bundles associated to holomorphic fibrations}, Ann. of  Math. {\bf 169} (2009), 531--560.

    \bibitem{BerndtssonOpenness}----------, {\it The openness conjecture for plurisubharmonic functions}, arXiv:1305.5781.

         \bibitem{BlockCRvsMA} Z. Blocki, {\it Cauchy-Riemann meet Monge-Amp$\grave{e}$re}, Bull. Math. Sci. {\bf 4} (2014), 433--480.

         \bibitem{DemaillyBook} J.-P. Demailly, Complex Analytic and Differential Geometry, available at Demailly's home page.

         \bibitem{DemaillyKollar} J.-P. Demailly and J. Koll\'ar, {\it Semi-continuity of complex singularity exponents and K\"ahler-Einstein metrics on Fano orbifolds}, Ann. Scient. \'Ec. Norm. Sup. {\bf 34} (2001), 525--556.

 \bibitem{DiederichFornaess77}
K. Diederich and J. E. Fornaess, {\it Pseudoconvex domains: bounded strictly plurisubharmonic exhaustion functions}, Invent. Math. {\bf 39} (1977), 129--141.

 \bibitem{DidOhsParameter} K. Diederich and T. Ohsawa, {\it On the parameter dependence of solutions to the $\bar{\partial}-$equation}, Math. Ann. {\bf 289} (1991), 581--588.

  \bibitem{DonnellyFefferman} H. Donnelly and C. Fefferman, \emph{$L^2$-cohomology and index theorem for the Bergman metric}, Ann. of Math. \textbf{118} (1983), 593--618.

  \bibitem{FarveJonsson} C. Farve and M. Jonsson, {\it Valuations and multiplier ideals}, J. Amer. Math. Soc. {\bf 18} (2005), 655--684.

\bibitem{GreeneKrantz82} R. E. Greene and St. G. Krantz, {\it Deformation of complex structures, estimates for the $\bar{\partial}$ equation and stability of the Bergman kernel}, Adv. Math. {\bf 43} (1982), 1--86.

    \bibitem{GuanZhou} Q. Guan and X. Zhou, {\it Strong openness conjecture for plurisubharmonic functions}, arXiv:1311.3781.

  \bibitem{Hamilton79} R. S. Hamilton, {\it Deformation of complex structures on manifolds with boundary II: Families of non-coercive boundary value problems}, J. Diff. Geom. {\bf 14} (1979), 409--473.

\bibitem{HarringtonPSH} P. S. Harrington, {\it The order of plurisubharmonicity on pseudoconvex domains with Lipschitz boundaries}, Math. Res. Lett. {\bf 14} (2007), 485--490.

\bibitem{Heip} P. H. Heip, {\it The weighted log canonical threshold}, C. R. Math. Acad. Sci. Paris {\bf 352} (2014), 283--288.

\bibitem{HormanderBook}  L. H\"ormander, An introduction to Complex Analysis in Several
Variables, North Holland 1990.

\bibitem{JpnssonMustata} M. Jonsson and M. Mustata, {\it An algebraic approach to the openness conjecture of Demailly and Koll\'ar}, arXiv:1205.4273.

\bibitem{Lempert} L. Lempert, {\it Modules of square integrable holomorphic germs}, arXiv:1404.0407.

\bibitem{MaitaniYamaguchi} F. Maitani and H. Yamaguchi, {\it Variations of Bergman metrics on Riemann surfaces}, Math. Ann. {\bf 330} (2004), 477--489.

\bibitem{PhongSturm} D. H. Phong and J. Sturm, {\it Algebraic estimates, stability of local zeta functions, and uniform estimates for distribution functions}, Ann. of Math. {\bf 152} (2000), 277--329.

    \bibitem{Schiffer} M. Schiffer, {\it The kernel function of an orthonormal system}, Duke Math. J. {\bf 13} (1946), 629--540.

\bibitem{TsujiBook} M. Tsuji, Potential Theory in Modern Function Theory,
Maruzen Co., LTD. Tokyo, 1959.

\bibitem{Varchenko} A. N. Varchenko, {\it Newton polyhedra and estimation of occillating integrals}, Funct. Anal. Appl. {\bf 10} (1976), 175--196.
\end{thebibliography}
\end{document}